\documentclass[pdflatex,sn-mathphys-num]{sn-jnl}

\usepackage{graphicx}%
\usepackage{multirow}%
\usepackage{amsmath,amssymb,amsfonts}%
\usepackage{amsthm}%
\usepackage{mathrsfs}%
\usepackage[title]{appendix}%
\usepackage{xcolor}%
\usepackage{textcomp}%
\usepackage{manyfoot}%
\usepackage{booktabs}%
\usepackage{algorithm}%
\usepackage{algorithmicx}%
\usepackage{algpseudocode}%
\usepackage{listings}%
\usepackage{cases}
\usepackage{multirow}

\theoremstyle{thmstyleone}%
\newtheorem{theorem}{Theorem}[section]
\newtheorem{proposition}{Proposition}[section]%

\theoremstyle{thmstyletwo}%
\newtheorem{remark}{Remark}[section]%

\theoremstyle{thmstyleone}%
\newtheorem{definition}{Definition}[section]%

\newtheorem{assumption}{Assumption}[section]%
\newtheorem{lemma}{Lemma}[section]%
\newtheorem{corollary}{Corollary}[section]%

\begin{document}

\title[A Riemannian QO-free method]{A  globally and superlinearly convergent QO-free method for nonlinear optimization on Riemannian  manifolds}


\author[1]{\fnm{Chunming} \sur{Tang}}

\author[2]{\fnm{Hao} \sur{He}}

\author*[3]{\fnm{Wen} \sur{Huang}}\email{wen.huang@xmu.edu.cn}

\author[4]{\fnm{Jinbao} \sur{Jian}}

\author[1]{\fnm{Ruyun} \sur{Li}}

\affil[1]{\orgdiv{School of Mathematics \& Center for Applied Mathematics of Guangxi}, \orgname{Guangxi University}, \orgaddress{\city{Nanning}, \postcode{530004}, \country{P. R. China}}}

\affil[2]{\orgdiv{School of Mathematics}, \orgname{South China University of Technology}, \orgaddress{\city{Guangzhou}, \postcode{510640}, \country{P. R. China}}}

\affil*[3]{\orgdiv{School of Mathematical Sciences}, \orgname{Xiamen University}, \orgaddress{\city{Xiamen}, \postcode{361005}, \country{P. R. China}}}

\affil[4]{\orgdiv{School of Mathematical Sciences \& Center for Applied Mathematics of Guangxi}, \orgname{Guangxi Minzu University}, \orgaddress{\city{Nanning}, \postcode{530006}, \country{P. R. China}}}


\abstract{The quadratic optimization-free (QO-free) method is a class of powerful and effective algorithms for solving nonlinearly constrained optimization problems in Euclidean spaces. The aim of the present work is to extend this method to solve optimization problems on manifolds with additional equality and inequality constraints.
We first present a specific algorithm in the manifold setting. 
At each iteration, three linear systems sharing a common linear operator are solved to determine the master search direction. In addition, a higher-order correction direction is obtained by solving a reduced linear least squares subproblem to circumvent the Maratos effect which is assumed not to arise in existing related literature.
A Riemannian arc search is then performed within the tangent space of the current iterate to generate the new iterate. Under appropriate assumptions, we establish the global and strong convergence of the proposed method. Moreover, we prove that the unit step size will eventually be accepted by the arc search, upon which the superlinear convergence of the algorithm is established.
Finally, numerical results demonstrate that the proposed method is very competitive compared with other existing approaches.}

\keywords{Constrained optimization, Riemannian manifolds, QO-free method,  Global convergence, Superlinear convergence.}



\maketitle

\section{Introduction}\label{sec1}
We consider the following problem
\begin{equation}\label{Rpro}
	\begin{aligned}
		&\min_{x\in\mathcal{M}}\ \ f(x)\\
		&{\rm \,\,s.t.}\,\,\,\ \ c_i(x)\leq 0,\quad i\in\mathcal{I}=\{1,\cdots,m\},\\
		&\quad\,\,\,\,\,\,\,\ \ c_i(x)= 0,\quad i\in\mathcal{E}=\{m+1,\cdots,m+\ell\},
	\end{aligned}\tag{P}
\end{equation}
where $\mathcal{M}$ is a complete Riemannian manifold with dimension $n$, and $f:\mathcal{M}\rightarrow \mathbb{R}$ and $c_i:\mathcal{M}\rightarrow \mathbb{R}$,  $i\in\mathcal{I}\cup\mathcal{E}$, are twice continuously differentiable functions.
Problems of this kind frequently arise in practical applications, such as 
nonnegative principal component analysis \cite{montanari2015non},
multicontact postures computation of static robot \cite{brossette2018multicontact},
direct trajectory optimization of rigid bodies \cite{teng2025riemannian}, 
$k$-means clustering \cite{carson2017manifold}, 
and robust Wasserstein distance \cite{jiang2024riemannian}.

If $\mathcal{M}=\mathbb{R}^n$, then problem \eqref{Rpro} reduces to the standard 
nonlinear constrained optimization problem in the Euclidean space $\mathbb{R}^n$.
For this case, there are already numerous efficient solution methods available, such as penalty function methods \cite{bertsekas2016nonlinear},
sequential quadratic optimization (SQO) or formerly known as sequential quadratic programming (SQP) methods \cite{gill2010sequential,lawrence2001computationally,jian2022sequential,jian2025partially},  quadratic optimization-free (QO-free) methods \cite{panier1988qp,gao1997sequential,kanzow1999qp,qi2000new,yang2003feasible,chen2006feasible,jian2010new,qi2002globally,facchinei2003local,liu2014infeasible}, and interior point methods \cite{nocedal2006numerical}. 
On the other hand, if there are no additional constraints in problem \eqref{Rpro}, i.e.,  
$\mathcal{I}=\emptyset$ and $\mathcal{E}=\emptyset$, then \eqref{Rpro} reduces to the classical optimization on Riemannian manifolds (or called Riemannian optimization). In this context, a variety of efficient methods have also been developed, such as Riemannian steepest descent method \cite{munier2006steepest}, Riemannian conjugate gradient methods \cite{sato2015new,zhu2017riemannian,sakai2021sufficient,Tang2023}, Riemannian Newton-type methods \cite{RBFGS02,HuangBroyden,Youse2016}, Riemannian trust-region methods \cite{RTR07,Grohs2016}. We refer the readers to the books \cite{absil2008optimization,Boumal2023,sato2021} and the survey \cite{hu2020brief} for more details on Riemannian optimization.

In contrast to the extensive research achievements gained for the two aforementioned special cases, studies on the general problem \eqref{Rpro} remain quite limited.
Below, we present a brief review of the relevant literature. 
In 2014, Yang et al. \cite{yang2014optimality} generalized the linear independent constraint qualification (LICQ), and the optimality conditions including the Karush-Kuhn-Tucker (KKT) conditions, the second-order necessary conditions, and the second-order sufficient conditions (SOSC) to the Riemannian setting. Later, Bergmann and Herzog~\cite{bergmann2019intrinsic} extended some other constraint qualifications (CQ) from Euclidean spaces to the manifold setting, including 
Mangasarian-Fromovitz CQ, Abadie CQ and Guignard CQ. 
In 2018, Brossette et al.~\cite{brossette2018multicontact} proposed a filter-type Riemannian SQO (RSQO) method to solve problem \eqref{Rpro} and applied it to the computation of multicontact postures for static robots; however, no convergence analysis was provided.
In 2020, Liu and Boumal \cite{liu2020simple} extended an augmented Lagrangian method and an exact penalty method---originally developed for the Euclidean setting---to the Riemannian setting, and further established their global convergence. Yet, the work \cite{liu2020simple} did not include a local convergence analysis.
In 2021, Schiela and Ortiz \cite{schiela2021sqp} developed an RSQO method for equality constrained problems (i.e., $\mathcal{I} = \emptyset$) on Hilbert manifolds, and established its local quadratic convergence.
In 2022, Obara et al. \cite{obara2022sequential} proposed a penalty-based RSQO method to solve problem \eqref{Rpro}, which reduces to the classical Euclidean Han's SQO method \cite{han1977globally} when $\mathcal{M}=\mathbb{R}^n$. Note that this is the first algorithm that enjoys both global and local convergence for \eqref{Rpro}. 
We also note that while the method in \cite{obara2022sequential} exhibits excellent numerical performance on two classes of test problems, it suffers from two theoretical limitations in general: 1) the quadratic optimization (QO) subproblem solved at each iteration may become infeasible (i.e., inconsistent) when the iteration point violates the equality and/or inequality constraints, thereby causing the algorithm to fail; 2) the Maratos effect \cite{maratos1978exact} may occur, meaning that close to a solution, the merit function may fail to accept the unit step. 
In fact, overcoming the Maratos effect is a necessary prerequisite for proving the local fast convergence of the algorithm, yet the method proposed in \cite{obara2022sequential} merely takes this property as an underlying assumption. Recently, Obara et al. \cite{obara2024stable} applied the method proposed in \cite{obara2022sequential} to the identification of stable linear systems.
In 2024, Lai and Yoshise \cite{lai2024riemannian} generalized the classical primal-dual interior-point method to the Riemannian setting, derived its local superlinear and quadratic convergence properties within the framework of the perturbed damped Riemannian Newton method, and further established the global convergence of the algorithm when combined with a classical line search. However, the Maratos effect may still be unavoidable in this context. Very recently, Obara et al. \cite{obara2025local} (and its companion paper \cite{obara2025primal}) proposed a Riemannian interior point method for problem \eqref{Rpro}, and established the local superlinear convergence and local near-quadratic convergence. Furthermore, when $\mathcal{E}=\emptyset$, they also demonstrated the global convergence by using a trust region approach.

From the above observations, we note that the existing methods for solving problem \eqref{Rpro} either lack convergence analysis \cite{brossette2018multicontact}, only achieve global convergence \cite{liu2020simple}, only exhibit local convergence \cite{schiela2021sqp}, or possess both global and local convergence but fail to overcome the Maratos effect \cite{obara2022sequential,lai2024riemannian}.
The goal of this paper is to propose a new method for problem \eqref{Rpro} that enjoys the following properties:
\begin{itemize}
	\item  the main subproblems solved at each iteration are always feasible;
	\item  both global and local fast convergence can be achieved simultaneously;
	\item  the Maratos effect can be overcome.
\end{itemize}

Some techniques in this paper originate from the works in~\cite{qi2000new},~\cite{mayne1976feasible},~\cite{tits2003primal}, and~\cite{panier1988qp}. 
In \cite{qi2000new}, a Euclidean QO-free method was proposed. Unlike SQO methods, QO-free methods avoid the need to solve QO subproblems; instead, they only require solving systems of linear equations. 
Over the past few decades, Euclidean QO-free methods have been well-studied and have evolved into diverse forms; see, e.g., \cite{panier1988qp,gao1997sequential,kanzow1999qp,qi2000new,yang2003feasible,chen2006feasible,jian2010new,qi2002globally,facchinei2003local,liu2014infeasible}. The framework of the proposed Riemannian QO-free method mainly follows that in~\cite{qi2000new}. 
Since the Euclidean QO-free method in \cite{qi2000new} only addresses problems with inequality constraints, our approach adopts the semi-penalty scheme proposed by Mayne and Polak \cite{mayne1976feasible} to convert problem \eqref{Rpro} into a sequence of inequality-constrained optimization problems of the following form:
\begin{equation}\label{TRpro}
	\begin{aligned}
		&\min_{x\in\mathcal{M}}&  F_\rho(x):=f(x)-\rho\sum_{i\in\mathcal{E}}c_i(x)\\
		&{\rm \,\,s.t.}& c_i(x)\leq 0,\quad i\in \mathcal{L}=\mathcal{I}\cup\mathcal{E},
	\end{aligned}\tag{${\rm P}_\rho$}
\end{equation}
where $\rho>0$ is a penalty parameter. Intuitively, large values of $\rho$ will force the iterates to satisfy the equality constraints $c_i(x)=0, i\in \mathcal{E}$. 
In our algorithm, we employ the scheme proposed in \cite{tits2003primal} to increase 
$\rho$, ensuring that it remains fixed after a finite number of iterations. Moreover, we also establish the relationship between problems \eqref{Rpro} and \eqref{TRpro}; see Proposition \ref{tp-p} below.
At each iteration, three linear systems sharing a common linear operator are solved to determine the master search direction, which yields significant computational advantages compared to using different linear operators. For instance, one can use LU decomposition to solve the linear systems; consequently, the computational cost is essentially equivalent to that of solving a single linear system.
These systems are always feasible and admit unique solutions.
The first one is solved to generate a descent direction for $F_\rho$,
the second one is used to generate a sufficient descent direction,
and the third one is to ensure the feasibility of the next iterate.
As a result, a master direction with feasible and descent properties is obtained.
In order to further overcome the Maratos effect, a higher-order correction direction is produced by solving a reduced linear least squares subproblem.
Note that this subproblem does not need to be solved in early iterations.
A Riemannian arc search is then performed within the tangent space of the current iterate to generate the new iterate. 
Our method belongs to the class of feasible methods for problem \eqref{TRpro}; that is, it starts from a (strictly) feasible point and generates a sequence of (strictly) feasible iterates for \eqref{TRpro}. It further qualifies as a feasible descent method once the parameter $\rho$ is fixed. 

Under appropriate assumptions, we establish the global convergence of the proposed method. It is worth mentioning that this global convergence can also be achieved even in the absence of the higher-order correction direction. The strong convergence (i.e., the convergence of the whole iteration sequence) is also achieved.
Moreover, we prove that the unit step size will eventually be accepted by the arc search, upon which the superlinear convergence of the algorithm is established.
Finally, numerical results demonstrate that the proposed method is competitive compared with other existing approaches. As far as we know, our method is the first extension of QO-free methods to the Riemannian setting. Additionally, this is the first time that the Maratos effect has been overcome in the context of solving problem \eqref{Rpro}.

The organization of this paper is as follows. In Section \ref{Sec:Preliminaries}, 
we review some basic results closely relevant to our method.
In Section \ref{Sec:algorithm}, we present the algorithm and discuss its properties.
Global convergence and superlinear convergence are established in Sections \ref{Sec:global} and \ref{Sec:superlinear}, respectively. Numerical experiments are provided in Section \ref{Sec:numerical}.
Conclusions are presented in Section \ref{Sec:Conclusion}.

\section{Preliminaries}\label{Sec:Preliminaries}

In this section, we present a review of some fundamental results for Riemannian optimization, as well as the method introduced in \cite{qi2000new}.

\subsection{Notations and Riemannian optimization}
Our notation follows the standard conventions described in \cite{absil2008optimization}.
The tangent space and the tangent bundle of $\mathcal{M}$ are denoted by $T_x\mathcal{M}$ and $T\mathcal{M}$, respectively. 
The inner product on $T_{x}\mathcal{M}$ is denoted by $\left \langle \cdot,\cdot \right \rangle_{x}$, and	$\|\xi\|_x:=\sqrt{\left \langle \xi, \xi \right \rangle_{x}}$ denotes the norm of a tangent vector $\xi\in T_{x}\mathcal{M}$.
We usually omit the subscript $x$ if it is clear from the context. Given two points $x,y\in\mathcal{M}$, the Riemannian distance is denoted by ${\rm dist}(x,y)$. For a linear operator $\mathcal{A}:\mathbb{W}\to\mathbb{V}$, we denote the spectral norm (or operator norm) of $\mathcal{A}$ by $\|\mathcal{A}\|_{\rm op}$, with $\mathbb{W}$, $\mathbb{V}$ being normed linear spaces.

It is well known that the concept of ``gradient" plays a crucial role in the design of optimization algorithms. Thus, we first introduce the definition of the gradient of a function on a Riemannian manifold, which is referred to as the Riemannian gradient.
\begin{definition}
	(\cite[Sec. 3.6]{absil2008optimization}) Let $f:\mathcal{M}\rightarrow \mathbb{R}$ be a smooth function on $\mathcal{M}$. The gradient of $f$ at $x\in \mathcal{M}$, denoted by ${\rm grad}f(x)$, is defined as the unique element of $T_x\mathcal{M}$ that satisfies 
	$$\langle {\rm grad}f(x), \xi\rangle_x={\rm D}f(x)[\xi],\ \forall \xi\in T_x\mathcal{M},$$
	where ${\rm D}f(x)$ is the differential of $f$ at $x$.
\end{definition}


Let $\bar{\nabla}$ denote the Riemannian connection, which is used to characterize the derivative of a vector field on $\mathcal{M}$; see \cite[Sec. 5]{absil2008optimization} for further details.
Based on this, we introduce the concept of Riemannian Hessian that enables us to establish the SOSC for \eqref{Rpro}.

\begin{definition}
	(\cite[Def. 5.5.1]{absil2008optimization}) Given a smooth function $f:\mathcal{M}\rightarrow \mathbb{R}$, the Riemannian Hessian of $f$ at $x\in\mathcal{M}$ is a linear mapping ${\rm Hess} f(x):T_x\mathcal{M}\rightarrow T_x\mathcal{M}$  defined by
	$${\rm Hess}f(x)[\xi_x]=\bar{\nabla}_{\xi_x}{\rm grad}f,\quad \forall \xi_x\in T_x\mathcal{M},$$
	where $\bar{\nabla}_{\xi_x}{\rm grad}f$ denotes the derivative of the vector field $\mathrm{grad} f$ along direction $\xi_x$.
\end{definition}


The following notion of retraction is used to define the update of iterates in Riemannian optimization algorithms.
\begin{definition}
	(\cite[Def. 4.1.1]{absil2008optimization}) A smooth mapping $R:T\mathcal{M}\rightarrow \mathcal{M}$ is called a retraction on $\mathcal{M}$ if it satisfies the properties: (i) $R_x(0_x)=x$, (ii) ${\rm D}R_x(0_x)={\rm id}_x,$ where $0_x$ denotes the zero element of $T_x\mathcal{M}$, $R_x$ and ${\rm id}_x$ denote the restriction of $R$ to $T_x \mathcal{M}$ and the identity mapping on $T_x\mathcal{M}$, respectively.
\end{definition}



Denote the feasible set and the Lagrangian function of problem \eqref{Rpro} by, respectively,
$$\mathcal{F}_P=\{x\in\mathcal{M}\mid c_i(x)\leq 0,\,i\in\mathcal{I};\,c_i(x)= 0,\,\, i\in\mathcal{E}\},$$
$$L(x,\lambda)=f(x)+\sum_{i\in\mathcal{L}}\lambda_ic_i(x),$$ 
where $\mathcal{L}=\mathcal{I}\cup\mathcal{E}$, and $\lambda\in\mathbb{R}^{m+\ell}$ is referred to as the Lagrangian multiplier vector. 
It is said that the linear independence constraint qualification (LICQ) holds on $\mathcal{M}$ at $x\in\mathcal{F}_P$ if
\begin{equation}\label{LICQ}
	\{{\rm grad}c_i(x)\mid i\in \mathcal{I}(x)\cup\mathcal{E}\}{\rm \,\,is\,\, linearly\,\, independent\,\, on}\,\,T_x\mathcal{M},
\end{equation}
where $\mathcal{I}(x)=\{i\in \mathcal{I} \mid c_i(x)=0\}$; see \cite{yang2014optimality}.

The conditions presented below are extensions of the Euclidean KKT conditions to the Riemannian setting.
\begin{definition}
	(\cite[Sec. 4]{yang2014optimality}) The point $x^*\in\mathcal{F}_P$ is said to be a KKT point for problem \eqref{Rpro} if there exists a vector $\lambda^*\in\mathbb{R}^{m+\ell}$ such that
	\begin{subequations}\label{kkt}
		\begin{align}
			\label{kkt1}
			&{\rm grad}f(x^*)+\sum_{i\in\mathcal{I}\cup\mathcal{E}}\lambda^*_i{\rm grad}c_i(x^*)=0_{x^*},\\
			\label{kkt2}
			&\lambda^*_i\geq 0,\,\,c_i(x^*)\leq 0,\quad \forall i\in\mathcal{I},\\
			\label{kkt3}
			&\lambda^*_ic_i(x^*)=0,\quad \forall i\in \mathcal{I},\\
			&c_i(x^*)=0,\quad\forall i\in\mathcal{E}.
		\end{align}
	\end{subequations}
	Further, $\lambda^*$ is referred to as the KKT multiplier vector, and the pair 
	$(x^*,\lambda^*)$ is called the KKT pair of problem \eqref{Rpro}.
\end{definition}


In \cite[Thm. 4.1]{yang2014optimality}, it is shown that if $x^*$ is a local solution of problem \eqref{Rpro} and the LICQ \eqref{LICQ} holds at $x^*$, then $x^*$ is a KKT point of  \eqref{Rpro}. The goal of our paper is to find KKT points of \eqref{Rpro}.  Furthermore, to establish superlinear convergence of the proposed method, we require the following condition.

\begin{definition}\label{Def5}
	(\cite[Sec. 4]{yang2014optimality}) Let $(x^*,\lambda^*)$ be a KKT pair of problem \eqref{Rpro}. 
	We say that the SOSC hold at $(x^*,\lambda^*)$ if 
	\begin{equation*}
		\left\langle{\rm Hess}_xL(x^*,\lambda^*)[\xi],\xi\right\rangle_{x^*}>0,\quad \forall \xi\in \mathcal{C}(x^*,\lambda^*),\,\,\xi\neq 0_{x^*},
	\end{equation*}
	where  ${\rm Hess}_xL(x^*,\lambda^*)$ denotes the Riemannian Hessian of the Lagrangian  function $L(x,\lambda)$ with respect to $x$, and
	\begin{equation*}
		\mathcal{C}(x^*,\lambda^*)=\left\{\xi_{x^*}\in T_{x^*}\mathcal{M} \,\,\Bigg\arrowvert\,\,\begin{aligned}
			&\langle\xi_{x^*},{\rm grad}c_i(x^*) \rangle=0, \quad \forall i\in \mathcal{I}(x^*)\,\,{\rm with}\,\,\lambda^*_i>0 \\
			&\langle\xi_{x^*},{\rm grad}c_i(x^*) \rangle\leq 0, \quad \forall i\in \mathcal{I}(x^*)\,\,{\rm with}\,\,\lambda^*_i=0 \\
			&\langle\xi_{x^*},{\rm grad}c_i(x^*) \rangle=0, \quad \forall i\in \mathcal{E} 
		\end{aligned}
		\right\}.
	\end{equation*}
\end{definition}

\subsection{Brief review of the Euclidean QO-free method in \cite{qi2000new}}

Consider problem \eqref{Rpro} with $\mathcal{M}=\mathbb{R}^n$ and $\mathcal{E}=\emptyset$, i.e.,
\begin{equation}\label{QOfree_pro}
	\begin{aligned}
		\min_{x\in \mathbb{R}^n}\,\,&f(x)\\
		{\rm s.t.}\,\,\,\,&c_i(x)\leq 0,\,\,\,\,i\in\mathcal{I}.
	\end{aligned}
\end{equation}
A pair $(x, \lambda)\in\mathbb{R}^{n+m}$ is called a KKT pair of \eqref{QOfree_pro} if it satisfies
\begin{equation}\label{QOfre_kkt}
	\nabla_x L(x,\lambda)=0,\ c_i(x)\leq 0,\  \lambda_i\geq 0,\      \lambda_ic_i(x)=0,\,\,\,\,i\in\mathcal{I},
\end{equation}
where $L(x,\lambda)=f(x)+\sum_{i\in\mathcal{I}}\lambda_ic_i(x)$ is the Lagrangian function of \eqref{QOfree_pro}. 
By making use of the Fischer Burmeister nonlinear complementarity problem function
\begin{equation*}
	\phi(a,b)=\sqrt{a^2+b^2}-a-b,\,\,\ \ \ a,b\in\mathbb{R},
\end{equation*}
the KKT conditions of \eqref{QOfree_pro} can be equivalently transformed as
\begin{equation}\label{Phixlam}
	\Phi(x,\lambda)=\begin{pmatrix}
		\nabla_x L(x,\lambda)\\
		\phi(-c_1(x),\lambda_1)\\
		\vdots\\
		\phi(-c_m(x),\lambda_m)
	\end{pmatrix}=0.
\end{equation}
Consider a Newton-type method for solving equation \eqref{Phixlam}. At the current iterate $(x^k,\mu^k)$, the Jacobian of $\Phi$ is given by 
\begin{equation}
	\Phi'(x^k,\mu^k)=\begin{pmatrix}
		\nabla_x^2 L(x^k,\mu^k)&\nabla c(x^k)\\
		{\rm diag}(\alpha^k)\nabla c(x^k)^{\top} & -({\rm diag}(\beta^k))^2
	\end{pmatrix},
\end{equation}
where ${\rm diag}(v)$ denotes the diagonal matrix whose $i$th diagonal element is $v_i$; $\nabla c(x^k)=(\nabla c_1(x^k),\cdots,\nabla c_m(x^k))$; $\nabla c_i$ denotes the Euclidean gradient of the  $c_i$; and $\alpha^k,\beta^k\in\mathbb{R}^m$, whose $i$th elements are defined as
\begin{equation}\label{alpha}
	\alpha_i^k=\dfrac{c_i(x^k)}{\sqrt{c_i^2(x^k)+(\mu_i^k)^2}}+1,\quad\beta_i^k=\left(1-\dfrac{\mu_i^k}{\sqrt{c_i^2(x^k)+(\mu_i^k)^2}}\right)^{1/2}.
\end{equation}
In order to apply quasi-Newton methods to solve \eqref{QOfree_pro} and achieve superlinear convergence, Qi and Qi \cite{qi2000new} replaced the Jacobian matrix $\Phi'(x^k,\mu^k)$ with
\begin{equation}\label{V_k}
	V_k:=\begin{pmatrix}
		H_k & \nabla c(x^k)\\
		{\rm diag}(\alpha^k)\nabla c(x^k)^{\top} & -\sqrt{2}{\rm diag}(\beta^k)
	\end{pmatrix},
\end{equation}
where $H_k$ is a symmetric positive definite matrix approximating $\nabla_x^2 L(x^k,\mu^k)$. 
At each iteration, the QO-free method in \cite{qi2000new} produces the master direction $d^k$ by solving three systems of linear equations of the form 
\begin{equation}\label{coefficient}
	V_k
	\begin{pmatrix}
		d\\
		\lambda
	\end{pmatrix}
	=
	\begin{pmatrix}
		-\nabla f(x^k)\\
		v^k
	\end{pmatrix},
\end{equation}
where $v^k$ is an appropriately selected vector for each system, with the goal of guaranteeing the descent and feasibility of the direction.

Furthermore, to avoid the Maratos effect, the higher-order correction direction $\tilde{d}^k$ is obtained by solving the following least squares subproblem
\begin{equation*}
	\begin{aligned}
		&\min_{d\in \mathbb{R}^n}\ \ \dfrac{1}{2}\left\langle d,H_kd\right\rangle\\
		&{\rm \,\,s.t.}\,\,\, c_i(x^k+d^k)+\langle\nabla c_i(x^k),d\rangle=-\varpi_k,\quad \forall i\in \mathcal{I}_k,
	\end{aligned}
\end{equation*}
where $\varpi_k>0$ and $\mathcal{I}_k\subset \mathcal{I}$.
An arc search is then performed to generate a step size $t_k$, and the new iterate is obtained as  
$x^{k+1}=x^k+t_kd^k+t_k^2\tilde{d}^k$.

\section{Riemannian QO-free Algorithm}\label{Sec:algorithm}

The proposed Riemannian QO-free algorithm is described in Section~\ref{sec:02} and the well-definiteness of this algorithm is shown in Section~\ref{sec:03}.
More precisely, we first investigate the relationship between the original problem \eqref{Rpro} and the penalized problem \eqref{TRpro}. We then elaborate on the proposed method and its corresponding algorithm. Finally, we prove the well-definedness of the proposed algorithm to conclude this section.

\subsection{Description of the algorithm}~\label{sec:02}

The feasible set, strictly feasible set and the Lagrangian function of problem \eqref{TRpro} are denoted by
$\mathcal{F}=\{x\in\mathcal{M}\mid c_i(x)\leq 0, i\in \mathcal{L}\}$, ${\rm int}\mathcal{F}=\{x\in \mathcal{M}\mid c_i(x)< 0, i\in \mathcal{L}\}$, and $L_\rho(x,\lambda)=F_\rho(x)+\sum_{i\in \mathcal{L}}\lambda_ic_i(x)$, respectively. 
In what follows, a point $x$ is said to be strictly feasible for \eqref{TRpro} if $x\in {\rm int}\mathcal{F}$.
Additionally, let $\lambda_\mathcal{S}=\{\lambda_i, i\in \mathcal{S}\}$ for some set $\mathcal{S} \subseteq\mathcal{L}$, and  $e=(1,\dots,1)^{\top}$ with a suitable dimension.


Proposition~\ref{tp-p} clarifies the relationship between \eqref{Rpro} and \eqref{TRpro}; its proof can be found in Appendix~\ref{AppendixA}.
\begin{proposition}\label{tp-p} 
	Given a penalty parameter $\rho>0$. 
	\begin{description}
		\item[(i)] If $(x^*,\lambda^*)$ is a KKT pair for \eqref{TRpro} with  $c_i(x^*)=0$ for all $i\in\mathcal{E}$, then $(x^*,\lambda_\mathcal{I}^*,\lambda_\mathcal{E}^*-\rho e)$ is a KKT pair for \eqref{Rpro}.
		\item[(ii)] If $\lambda_{\mathcal{E}^*}\geq0$ and $(x^*,\lambda_\mathcal{I}^*,\lambda_\mathcal{E}^*-\rho e)$ is a KKT pair of problem \eqref{Rpro}, then $(x^*,\lambda^*)$ is a KKT pair of problem \eqref{TRpro}.
	\end{description}
\end{proposition}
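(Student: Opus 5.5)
The plan is to write out the KKT conditions for \eqref{TRpro} explicitly and match them term by term with those of \eqref{Rpro}, using linearity of the Riemannian gradient. Since \eqref{TRpro} has only inequality constraints indexed by $\mathcal{L}=\mathcal{I}\cup\mathcal{E}$, a pair $(x^*,\lambda^*)$ is KKT for \eqref{TRpro} exactly when four things hold:
\begin{equation*}
{\rm grad}F_\rho(x^*)+\sum_{i\in\mathcal{L}}\lambda_i^*{\rm grad}c_i(x^*)=0_{x^*},\quad \lambda_i^*\ge 0,\quad c_i(x^*)\le 0,\quad \lambda_i^*c_i(x^*)=0,
\end{equation*}
where the last three conditions are required for every $i\in\mathcal{L}$. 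The one computational input is linearity of the Riemannian gradient, which follows from its defining identity $\langle{\rm grad}f(x),\xi\rangle={\rm D}f(x)[\xi]$ and linearity of ${\rm D}$. It gives
\begin{equation*}
{\rm grad}F_\rho(x^*)={\rm grad}f(x^*)-\rho\sum_{i\in\mathcal{E}}{\rm grad}c_i(x^*).
\end{equation*}
Substituting this, the stationarity equation for \eqref{TRpro} becomes
\begin{equation*}
{\rm grad}f(x^*)+\sum_{i\in\mathcal{I}}\lambda_i^*{\rm grad}c_i(x^*)+\sum_{i\in\mathcal{E}}(\lambda_i^*-\rho){\rm grad}c_i(x^*)=0_{x^*}.
\end{equation*}
This is exactly \eqref{kkt1} for the multiplier $(\lambda_\mathcal{I}^*,\lambda_\mathcal{E}^*-\rho e)$. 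So stationarity transfers in both directions automatically, and only the sign, feasibility and complementarity conditions need checking.

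For (i), the conditions for $i\in\mathcal{I}$, namely $\lambda_i^*\ge0$, $c_i(x^*)\le0$ and $\lambda_i^*c_i(x^*)=0$, are shared verbatim by the two problems. For $i\in\mathcal{E}$, problem \eqref{Rpro} imposes no sign condition on the multiplier and asks only that $c_i(x^*)=0$, which is the extra hypothesis in (i). Hence $x^*\in\mathcal{F}_P$ and all of \eqref{kkt} holds.

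For (ii), the $\mathcal{I}$-conditions again transfer directly. For $i\in\mathcal{E}$, the KKT conditions of \eqref{Rpro} give $c_i(x^*)=0$. This yields $c_i(x^*)\le0$ and $\lambda_i^*c_i(x^*)=0$ at once, and the sign condition $\lambda_i^*\ge0$ is precisely the added hypothesis $\lambda_\mathcal{E}^*\ge0$. Together with the shared stationarity equation, this shows $(x^*,\lambda^*)$ is KKT for \eqref{TRpro}.

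There is no genuine obstacle here. The only point needing care is to justify the gradient linearity via the defining identity of ${\rm grad}$, and to note that the equality multipliers in \eqref{Rpro} are sign-free; this is why (ii) needs the hypothesis $\lambda_\mathcal{E}^*\ge0$ and (i) needs the hypothesis $c_\mathcal{E}(x^*)=0$.
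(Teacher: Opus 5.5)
Your proposal is correct and matches the paper's proof. Like the paper, you rewrite the stationarity condition of \eqref{TRpro}, via ${\rm grad}F_\rho={\rm grad}f-\rho\sum_{i\in\mathcal{E}}{\rm grad}c_i$, as the stationarity condition of \eqref{Rpro} with multiplier $(\lambda_\mathcal{I}^*,\lambda_\mathcal{E}^*-\rho e)$, and then use $c_\mathcal{E}(x^*)=0$ in (i) and $\lambda_\mathcal{E}^*\ge 0$ in (ii) to transfer the remaining sign, feasibility and complementarity conditions; your justification of gradient linearity from the defining identity of ${\rm grad}$ is a detail the paper leaves implicit.
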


The condition ``$c_i(x^*)=0$ for all $i\in\mathcal{E}$" in proposition \ref{tp-p} can be guaranteed by a sufficiently large penalty parameter $\rho$ (see the proof of Theorem \ref{th2}), which inspires us to increase $\rho$ during iterations. 

Our algorithm starts from a strict feasible point $x^0\in {\rm int}\mathcal{F}$, and generates a sequence of strict feasible iterates of problem \eqref{TRpro}.
For the $k$-th iteration $x^k\in{\rm int}\mathcal{F}$, we first obtain $(\eta^{k0},\lambda^{k0})\in T_{x^k}\mathcal{M}\times\mathbb{R}^{m+\ell}$  by solving the following linear system:
\begin{subnumcases}
	{\label{s1}}
	\label{s11}
	\mathcal{H}_k[\eta]+\sum_{i\in \mathcal{L}}\lambda_i{\rm grad}c_i(x^k)=-{\rm grad}F_{\rho_k}(x^k),\\
	\label{s12}
	\alpha_i^k\langle{\rm grad}c_i(x^k),\eta\rangle-\sqrt{2}\beta_i^k\lambda_i=0,\quad \forall i\in \mathcal{L},	
\end{subnumcases}
where $\mathcal{H}_k:T_{x^k}\mathcal{M}\rightarrow T_{x^k}\mathcal{M}$ is a linear and symmetric positive definite operator.
The scalars $\alpha_i^k, \beta_i^k, i\in \mathcal{L}$ are of the form given in \eqref{alpha}.
These are well-defined because $x^k\in{\rm int}\mathcal{F}$, i.e. $c_i(x^k)<0$ for all $i\in \mathcal{L}$. It has been shown in~\cite{fischer1992special,fischer1995ncp} that $\alpha_k^k$ and $\beta_i^k$ satisfy
\begin{equation}\label{alha-beta}
	\alpha_i^k\in[0,1),\,\,\beta_i^k\in(0,\sqrt{2})\,\,{\rm and}\,\, (\alpha_i^k)^2+(\beta_i^k)^4\geq 3-2\sqrt{2},\quad \forall i\in \mathcal{L}.
\end{equation}

The direction $\eta^{k0}$ generated by \eqref{s1} is a descent direction for the function $F_{\rho_k}(x)$ at $x^k$. To the end, by \eqref{s11}, we have
\begin{equation}\label{dk0}
	\langle{\rm grad}F_{\rho_k}(x^k),\eta^{k0}\rangle
	=  -\left\langle \eta^{k0},\mathcal{H}_k[\eta^{k0}]\right\rangle-\sum_{i\in \mathcal{L}}\lambda_i^{k0}\left\langle{\rm grad}c_i(x^k),\eta^{k0}\right\rangle.
\end{equation}
If $\mu_i^k=0$, we must have $\alpha_i^k=0$ and $\beta_i^k=1$ by \eqref{alpha}. It follows from \eqref{s12} that $\lambda_i^{k0}=0$, which yields $\langle{\rm grad}F_{\rho_k}(x^k),\eta^{k0}\rangle
=  -\left\langle \eta^{k0},\mathcal{H}_k[\eta^{k0}]\right\rangle$. If $\mu_i^k\neq 0$, then $\alpha_i^k\neq 0$. Thus, \eqref{s12} and \eqref{dk0} yield that 
\begin{equation*}
	\begin{aligned}
		\langle{\rm grad}F_{\rho_k}(x^k),\eta^{k0}\rangle
		&=  -\left\langle \eta^{k0},\mathcal{H}_k[\eta^{k0}]\right\rangle-\sum_{i\in J^k_\mu}\dfrac{\sqrt{2}\beta_i^k(\lambda_i^{k0})^2}{\alpha_i^k} \\\
		& \leq -\left\langle \eta^{k0},\mathcal{H}_k[\eta^{k0}]\right\rangle,
	\end{aligned}
\end{equation*}
where $J^k_\mu=\{i\in \mathcal{L}\mid \mu_i^k\neq 0\}$. This together with the positive definiteness of $\mathcal{H}_k$ implies  
\begin{equation}\label{dk01}
	\langle{\rm grad}F_{\rho_k}(x^k),\eta^{k0}\rangle\leq -\left\langle \eta^{k0},\mathcal{H}_k[\eta^{k0}]\right\rangle <0.
\end{equation}


To find a sufficient descent direction, we solve the following system---a perturbation of \eqref{s1}---to obtain $(\eta^{k1},\lambda^{k1})\in T_{x^k}\mathcal{M}\times\mathbb{R}^{m+\ell}$:
\begin{subnumcases}
	{\label{s2}}
	\label{s21}
	\mathcal{H}_k[\eta]+\sum_{i\in \mathcal{L}}\lambda_i{\rm grad}c_i(x^k)=-{\rm grad}F_{\rho_k}(x^k),\\
	\label{s22}
	\alpha_i^k\langle{\rm grad}c_i(x^k),\eta\rangle-\sqrt{2}\beta_i^k\lambda_i=\alpha_i^k(\min\{\lambda_i^{k0},0\})^3,\quad \forall i\in \mathcal{L}.
\end{subnumcases}
In Lemma \ref{le3} below, it is shown that the descent amount of $\eta^{k1}$ is greater than that of $\eta^{k0}$.  However, $\eta^{k1}$ alone does not ensure convergence. To derive the theoretical results, we next solve the following linear system to compute  $(\eta^{k2},\lambda^{k2})\in T_{x^k}\mathcal{M}\times\mathbb{R}^{m+\ell}$:
\begin{subnumcases}
	{\label{s3}}
	\label{s31}
	\mathcal{H}_k[\eta]+\sum_{i\in \mathcal{L}}\lambda_i{\rm grad}c_i(x^k)=-{\rm grad}F_{\rho_k}(x^k),\\
	\label{s32}
	\alpha_i^k\langle{\rm grad}c_i(x^k),\eta\rangle-\sqrt{2}\beta_i^k\lambda_i=\alpha_i^k(\min\{\lambda_i^{k0},0\})^3-\alpha_i^k\|\eta^{k1}\|^\nu,\quad \forall i\in \mathcal{L},
\end{subnumcases}
where $\nu\in(2,+\infty)$ is a constant. Note that \eqref{s3} is a perturbation of \eqref{s2}, where the term ``$-\alpha_i^k\|\eta^{k1}\|^\nu$" is added to enhance the feasibility of the resulting direction.

Subsequently, the master search direction $\eta^k$ is defined as a linear combination of $\eta^{k1}$ and $\eta^{k2}$ as follows
\begin{equation}\label{direction}
	\eta^k=(1-\theta_k)\eta^{k1}+\theta_k\eta^{k2},\,\,\lambda^k=(1-\theta_k)\lambda^{k1}+\theta_k\lambda^{k2},
\end{equation}
where the parameter $\theta_k$ plays a crucial role in balancing the descent property and feasibility. 
Rather than extending the Euclidean form proposed in \cite{qi2000new}, we extend the one from \cite{panier1988qp} to the following form: 
\begin{eqnarray}\label{thetak}
	\theta_k=
	\left \{
	\begin{array}{ll}
		1,& {\rm if}\ \langle{\rm grad}F_{\rho_k}(x^k),\eta^{k2}\rangle\leq \tau\langle{\rm grad}F_{\rho_k}(x^k),\eta^{k1}\rangle,\\
		\dfrac{(1-\tau)\langle{\rm grad}F_{\rho_k}(x^k),\eta^{k1}\rangle}{\langle{\rm grad}F_{\rho_k}(x^k),\eta^{k1}-\eta^{k2}\rangle},& {\rm otherwise},
	\end{array}
	\right.
\end{eqnarray}
where $\tau\in(0,1).$


Finally, to avoid the Maratos effect, we solve the following linear least squares subproblem to obtain a higher-order correction direction $\tilde{\eta}^k\in T_{x^k}\mathcal{M}$:
\begin{equation}\label{subpro}
	\begin{aligned}
		&\min_{\eta\in T_{x^k}\mathcal{M}} \ \ \dfrac{1}{2}\left\langle \eta,\mathcal{H}_k[\eta]\right\rangle\\
		&{\rm \,\,s.t.}\,\,\, \ c_i(R_{x^k}(\eta^k))+\langle{\rm grad}c_i(x^k),\eta\rangle=-\varpi_k,\quad \forall i\in \mathcal{L}_k,
	\end{aligned}
\end{equation}
where $\mathcal{L}_k=\{i\in \mathcal{L}\mid c_i(x^k)\geq -\lambda_i^k\}$, and
\begin{equation*}
	\varpi_k=\max\left\{\|\eta^k\|^\varrho,\,\,\max_{i\in \mathcal{L}_k}\bigg\arrowvert\dfrac{\alpha_i^k}{\sqrt{2}\delta_i^k\lambda_i^k}-1\bigg\arrowvert^\kappa\|\eta^k\|^2\right\}
\end{equation*}
with $\varrho\in(2,3)$, $\kappa\in(0,1)$, and $\delta_i^k=-\beta_i^k/c_i(x^k)$, $i\in \mathcal{L}_k$. We point out that subproblem \eqref{subpro} is an extension of that in \cite{qi2000new}, and it is not required to solve for global convergence.


\vskip 0.3cm
Building on the foregoing analysis, we formally present our algorithm in Algorithm~\ref{algo1}.
\begin{algorithm}
	\caption{A Riemannian QO-free method (RQO-free)}\label{algo1}
	\begin{algorithmic}[1]
		\Require Initial point $x^0 \in {\rm int}\mathcal{F}$; initial Hessian approximation $\mathcal{H}_0:T_{x^0}\mathcal{M}\rightarrow T_{x^0}\mathcal{M}$ which is a symmetric positive definite linear operator; parameters: $\nu\in(2,+\infty)$, $\tau\in(0,1)$,  $\varrho\in(2,3)$,  $\kappa\in(0,1)$, $\sigma\in(0,1/2)$,  $\varsigma\in(0,1)$, $\tilde{\rho}>1$, $\rho_0>0$, $r_1>0$, $r_2>0$, $r_3>0$,  $\bar{\mu}>0$, $\mu_i^0\in(0,\bar{\mu}]$, $i\in \mathcal{L}$.
		\For{$k=0,1,\cdots$}
		\State Compute $(\eta^{k0},\lambda^{k0})$ by solving  \eqref{s1}.
		\If {(a) $\|\eta^{k0}\|\leq r_1$, (b)  $\lambda^{k0}_\mathcal{E}\ngeq r_2e$, and (c) $\lambda^{k0}\geq -r_3e$ hold} 
		
		\State Set $\rho_{k+1}=\tilde{\rho}\rho_k$, $x^{k+1}=x^k$, $\mathcal{H}_{k+1}=\mathcal{H}_k$, $\mu^{k+1}=\mu^k$.
		
		\State	\textbf{continue}.
		\EndIf
		
		\State Compute $(\eta^{k1},\lambda^{k1})$ by solving \eqref{s2}.
		\State Compute $(\eta^{k2},\lambda^{k2})$ by solving \eqref{s3}.
		\State Obtain $\eta^k$ and $\lambda^k$ according to \eqref{direction}.
		\State Try to find $\tilde{\eta}^k$ by solving the subproblem ($\ref{subpro}$).
		\If{$\|\tilde{\eta}^k\|>\|\eta^k\|$ or \eqref{subpro} has no solution}
		\State Set $\tilde{\eta}^k=0_{x^k}$.
		\EndIf
		
		\State Compute $t_k$, the first number $t$ in the sequence $\{1,\varsigma,\varsigma^2,\cdots\}$ satisfying
		\begin{equation}\label{linef}
			F_{\rho_k}(R_{x^k}(t\eta^k+t^2\tilde{\eta}^k))\leq F_{\rho_k}(x^k)+\sigma t\langle{\rm grad}F_{\rho_k}(x^k),\eta^k\rangle,
		\end{equation}
		\begin{equation}\label{lineg}
			c_i(R_{x^k}(t\eta^k+t^2\tilde{\eta}^k))< 0, \quad\forall i\in \mathcal{L}.
		\end{equation}
		\State Set $x^{k+1}=R_{x^k}(t_k\eta^k+t_k^2\tilde{\eta}^k)$ and $\rho_{k+1}=\rho_k$.
		\State Generate a new symmetric definite positive linear operator $\mathcal{H}_{k+1}$.
		\State Update $\mu^{k+1}_i$ for all $i\in \mathcal{L}$ by
		\begin{equation}\label{gamma}
			\mu_i^{k+1}=\min\{\max\{\lambda_i^{k0},\|\eta^k\|\},\bar{\mu}\}.
		\end{equation}
		\EndFor
	\end{algorithmic}
\end{algorithm}
\begin{remark}
	\begin{description}
		\item[(i)] The update strategy for the penalty parameter $\rho_k$ in steps 3 and 4 of Algorithm \ref{algo1} is adapted from \cite{tits2003primal}, which is critical for ensuring global convergence. We will show that $\rho_k$  can be fixed after a finite number of iterations (see Lemma \ref{facanshu} below).
		\item[(ii)] In \cite{qi2000new}, the algorithm terminates when $\eta^{k0}=0$ (in our notation), and the current strictly feasible iterate $x^k$ is an unconstrained stationary point of $f$, i.e., $\nabla f(x^k)=0$  (see \cite[Lem. 2.2]{qi2000new}). In our Algorithm \ref{algo1}, if $\mathcal{E}=\emptyset$, then the steps 3-6 can be omitted, and the algorithm can also be terminated when $\eta^{k0}=0$, yielding an unconstrained stationary point of $f$ with respect to the inequality constraints, i.e., 
		${\rm grad}f(x^k)=0_x$ and $c_i(x^k)<0$ for all $i\in\mathcal{I}$. However, when $\mathcal{E}\neq\emptyset$, an unconstrained stationary point of $F_{\rho_k}$ cannot be a KKT point of the original problem \eqref{Rpro}, as the equality constraints of \eqref{Rpro} cannot be satisfied. Therefore, in the case where
		$\mathcal{E}\neq\emptyset$ and $\eta^{k0}=0$, Algorithm \ref{algo1} cannot be terminated, and the penalty parameter $\rho_k$ must be increased. 
		In the remainder of this paper, for generality, we always assume that $\mathcal{E}\neq\emptyset$.
		\item[(iii)]  A straightforward calculation shows that $(\eta^k,\lambda^k)$ defined by \eqref{direction} satisfies the following system:
		\begin{subnumcases}
			{\label{s4}}
			\label{s41}
			\mathcal{H}_k[\eta]+\sum_{i\in L}\lambda_i{\rm grad}c_i(x^k)=-{\rm grad}F_{\rho_k}(x^k),\\
			\label{s42}
			\alpha_i^k\langle{\rm grad}c_i(x^k),\eta\rangle-\sqrt{2}\beta_i^k\lambda_i=\alpha_i^k(\min\{\lambda_i^{k0},0\})^3-\theta_k\alpha_i^k\|\eta^{k1}\|^\nu,\quad \forall i\in \mathcal{L}.
		\end{subnumcases}
		\item[(iv)]  For the global convergence of Algorithm \ref{algo1}, steps 10-13 can be omitted by simply setting  $\tilde{\eta}^k=0_{x^k}$. 
		These steps are included solely to avoid the Maratos effect.
		
	\end{description}
\end{remark}

\subsection{Well-definiteness of the algorithm} ~\label{sec:03}

In this subsection, we show that (i) the linear systems \eqref{s1}, \eqref{s2}, and \eqref{s3} admit unique solutions; (ii) the arc search defined by \eqref{linef} and \eqref{lineg} terminates in finite iterations.
For readability, the proofs of the following lemmas are deferred to Appendix \ref{AppendixA}.

We define the linear operator $\mathcal{A}_k$ from $T_{x^k}\mathcal{M}\times\mathbb{R}^{m+\ell}$ to itself as follows: 
\begin{equation}\label{operator}
	\mathcal{A}_k(\eta,\lambda)
	=\left(\mathcal{H}_k[\eta]+\sum_{i\in \mathcal{L}}\lambda_i{\rm grad}c_i(x^k),
	\Lambda_k\right)
\end{equation}
where $\Lambda_k=(\alpha_i^k\langle{\rm grad}c_i(x^k),\eta\rangle-\sqrt{2}\beta_i^k\lambda_i,\ i\in \mathcal{L} )\in \mathbb{R}^{m+\ell}$.
Note that when $\mathcal{M}=\mathbb{R}^n$ and $\mathcal{E}=\emptyset$, the operator $\mathcal{A}_k$ reduces to the coefficient matrix $V_k$ defined in \eqref{V_k}. 
Moreover, it can be observed from its definition that the operator $\mathcal{A}_k$ 
is linear transformation.

The following lemma establishes the nonsingularity of the linear operator for every $k\in\mathbb{N}$, an important property for our analysis. 
\begin{lemma}\label{le1}
	The operator $\mathcal{A}_k$ is nonsingular for all $k\in\mathbb{N}$.
\end{lemma}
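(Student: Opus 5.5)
Since $\mathcal{A}_k$ is a linear map from the finite-dimensional space $T_{x^k}\mathcal{M}\times\mathbb{R}^{m+\ell}$ to itself, it suffices to show that its kernel is trivial. So I would assume $\mathcal{A}_k(\eta,\lambda)=(0_{x^k},0)$ and deduce that $\eta=0_{x^k}$ and $\lambda=0$. The homogeneous system reads
\begin{equation*}
\mathcal{H}_k[\eta]+\sum_{i\in\mathcal{L}}\lambda_i{\rm grad}c_i(x^k)=0_{x^k},\qquad
\alpha_i^k\langle{\rm grad}c_i(x^k),\eta\rangle=\sqrt{2}\beta_i^k\lambda_i,\quad i\in\mathcal{L}.
\end{equation*}
The key facts are the bounds \eqref{alha-beta}, namely $\alpha_i^k\in[0,1)$ and $\beta_i^k\in(0,\sqrt2)$. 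Both are available because $x^k\in{\rm int}\mathcal{F}$, which gives $c_i(x^k)<0$, so $\alpha_i^k,\beta_i^k$ are well defined and $\beta_i^k>0$ strictly.

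Next I would take the inner product of the first equation with $\eta$, mimicking the computation leading to \eqref{dk01}. This gives
\begin{equation*}
\langle\eta,\mathcal{H}_k[\eta]\rangle+\sum_{i\in\mathcal{L}}\lambda_i\langle{\rm grad}c_i(x^k),\eta\rangle=0.
\end{equation*}
I would then split the indices according to whether $\alpha_i^k=0$ or not.
\begin{itemize}
\item If $\alpha_i^k=0$, the second equation together with $\beta_i^k>0$ forces $\lambda_i=0$, so that term of the sum vanishes.
\item If $\alpha_i^k>0$, then $\langle{\rm grad}c_i(x^k),\eta\rangle=\sqrt2\beta_i^k\lambda_i/\alpha_i^k$, so the corresponding term equals $\sqrt2\beta_i^k\lambda_i^2/\alpha_i^k\ge0$.
\end{itemize}
Hence $\langle\eta,\mathcal{H}_k[\eta]\rangle\le 0$. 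Since $\mathcal{H}_k$ is positive definite, this gives $\eta=0_{x^k}$. Moreover, each nonnegative term $\sqrt2\beta_i^k\lambda_i^2/\alpha_i^k$ must then vanish, which gives $\lambda_i=0$ whenever $\alpha_i^k>0$.

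Finally, with $\eta=0_{x^k}$ the second equation alone yields $\sqrt2\beta_i^k\lambda_i=0$, and therefore $\lambda_i=0$ for every $i\in\mathcal{L}$ because $\beta_i^k>0$. This closes the argument without needing any linear independence of the gradients ${\rm grad}c_i(x^k)$, which matters because no LICQ is assumed at interior iterates.

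The only delicate point is the strict positivity of $\beta_i^k$. It relies on $c_i(x^k)\neq0$, which strict feasibility of the iterates guarantees; I would remark that the algorithm maintains this through \eqref{lineg} and through the choice $x^0\in{\rm int}\mathcal{F}$. Everything else is routine, so nonsingularity holds for every $k\in\mathbb{N}$.
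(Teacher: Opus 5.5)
Your proof is correct and follows essentially the paper's argument: test the first equation against $\eta$, use the second equation to make the multiplier terms nonnegative, conclude $\eta=0_{x^k}$ from positive definiteness of $\mathcal{H}_k$, and then get $\lambda=0$ from $\beta_i^k>0$. The only cosmetic difference is that the paper divides by $\beta_i^k>0$ to eliminate $\lambda_i=\alpha_i^k\langle{\rm grad}c_i(x^k),\eta\rangle/(\sqrt{2}\beta_i^k)$, which avoids your case split on whether $\alpha_i^k=0$.
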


Lemma \ref{le1} implies that the systems \eqref{s1}, \eqref{s2}, and \eqref{s3} each have a unique solution. More precisely, for each $k\in \mathbb{N}$, we have
\begin{subequations}\label{relationship}
	\begin{align}
		\label{relationship1}
		&(\eta^{k0},\lambda^{k0})
		=\mathcal{A}_k^{-1}
		\left(-{\rm grad}F_{\rho_k}(x^k),0\right)
		,\\
		\label{relationship2}
		&(\eta^{k1},\lambda^{k1})
		=\mathcal{A}_k^{-1}
		\left(-{\rm grad}F_{\rho_k}(x^k),v^{k1}\right)
		,\\
		\label{relationship3}
		&(\eta^{k2},\lambda^{k2})
		=\mathcal{A}_k^{-1}
		\left(-{\rm grad}F_{\rho_k}(x^k),v^{k2}\right)
		,
	\end{align}
\end{subequations}
where 
\begin{equation}\label{v1}
	v^{k1}=\left(\alpha_i^k(\min\{\lambda_i^{k0},0\})^3, \ i\in \mathcal{L}\right)\in \mathbb{R}^{m+\ell},
\end{equation}
\begin{equation}\label{v2}
	v^{k2}=\left(\alpha_i^k(\min\{\lambda_i^{k0},0\})^3-\alpha_i^k\|\eta^{k1}\|^\nu, \ i\in \mathcal{L}\right)\in \mathbb{R}^{m+\ell}.
\end{equation}

%

As shown before, $\eta^{k0}$ is a descent direction of $F_{\rho^k}$ at $x^k$. Based on this fact, we prove in Lemma~\ref{le3} that $\eta^{k1}$ and $\eta^{k}$ are also descent directions of $F_{\rho^k}$ at $x^k$. 
\begin{lemma}\label{le3}
	Let $\eta^{k1}$ and $\eta^k$ be generated by Algorithm \ref{algo1} at iteration $k$; then they are descent directions for $F_{\rho_k}(x)$ at $x^k$, i.e.,
	\begin{description}
		\item[(i)] $\langle{\rm grad}F_{\rho_k}(x^k),\eta^{k1}\rangle=\langle{\rm grad}F_{\rho_k}(x^k),\eta^{k0}\rangle-\mathop\sum\limits_{i:\lambda_i^{k0}<0}(\lambda_i^{k0})^4<0$;
		\item [(ii)] $\langle{\rm grad}F_{\rho^k}(x^k),\eta^{k}\rangle\leq\tau\langle{\rm grad}F_{\rho^k}(x^k),\eta^{k1}\rangle<0.$
	\end{description}
\end{lemma}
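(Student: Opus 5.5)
For part (i) I will compare the systems \eqref{s1} and \eqref{s2} through their difference. Set $\Delta\eta=\eta^{k1}-\eta^{k0}$ and $\Delta\lambda=\lambda^{k1}-\lambda^{k0}$. Subtracting the two systems gives
\[
\mathcal{H}_k[\Delta\eta]+\sum_{i\in\mathcal{L}}\Delta\lambda_i\,{\rm grad}c_i(x^k)=0
\]
and
\[
\alpha_i^k\langle{\rm grad}c_i(x^k),\Delta\eta\rangle-\sqrt2\beta_i^k\Delta\lambda_i=\alpha_i^k(\min\{\lambda_i^{k0},0\})^3 .
\]
Next I compute $\langle{\rm grad}F_{\rho_k}(x^k),\Delta\eta\rangle$ using \eqref{s11}:
\[
\langle{\rm grad}F_{\rho_k}(x^k),\Delta\eta\rangle=-\langle\mathcal{H}_k[\eta^{k0}],\Delta\eta\rangle-\sum_i\lambda_i^{k0}\langle{\rm grad}c_i(x^k),\Delta\eta\rangle .
\]
By symmetry of $\mathcal{H}_k$ and the first difference equation, $\langle\mathcal{H}_k[\eta^{k0}],\Delta\eta\rangle=\langle\eta^{k0},\mathcal{H}_k[\Delta\eta]\rangle=-\sum_i\Delta\lambda_i\langle{\rm grad}c_i(x^k),\eta^{k0}\rangle$. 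The latter sum can be evaluated through \eqref{s12}.

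Here I split the indices as in the discussion preceding \eqref{dk01}. If $\alpha_i^k=0$, then $\lambda_i^{k0}=0$ and $\Delta\lambda_i=0$, so these indices contribute nothing. If $\alpha_i^k\neq0$, then \eqref{s12} gives $\langle{\rm grad}c_i,\eta^{k0}\rangle=\sqrt2\beta_i^k\lambda_i^{k0}/\alpha_i^k$. The difference equation gives $\langle{\rm grad}c_i,\Delta\eta\rangle=(\min\{\lambda_i^{k0},0\})^3+\sqrt2\beta_i^k\Delta\lambda_i/\alpha_i^k$. Substituting both, the cross terms $\sqrt2\beta_i^k\lambda_i^{k0}\Delta\lambda_i/\alpha_i^k$ cancel. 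What remains is
\[
\langle{\rm grad}F_{\rho_k}(x^k),\Delta\eta\rangle=-\sum_i\lambda_i^{k0}(\min\{\lambda_i^{k0},0\})^3=-\sum_{\lambda_i^{k0}<0}(\lambda_i^{k0})^4 .
\]
This is the identity in (i). Negativity then follows from \eqref{dk01}, since the extra term is nonpositive. The only delicate point is the bookkeeping of this cancellation, which is the main step of the lemma.

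For (ii) I use \eqref{direction}, by which $\langle g,\eta^k\rangle=(1-\theta_k)\langle g,\eta^{k1}\rangle+\theta_k\langle g,\eta^{k2}\rangle$ with $g={\rm grad}F_{\rho_k}(x^k)$. In the first case of \eqref{thetak}, $\theta_k=1$ and the inequality is exactly the case condition. In the second case, $\langle g,\eta^{k2}\rangle>\tau\langle g,\eta^{k1}\rangle$. Writing
\[
\langle g,\eta^k\rangle=\langle g,\eta^{k1}\rangle-\theta_k\langle g,\eta^{k1}-\eta^{k2}\rangle
\]
and inserting $\theta_k$ gives exactly $\tau\langle g,\eta^{k1}\rangle$. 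The denominator of $\theta_k$ is nonzero, indeed $\langle g,\eta^{k1}-\eta^{k2}\rangle<(1-\tau)\langle g,\eta^{k1}\rangle<0$, so $\theta_k$ is well defined and lies in $(0,1)$. Strict negativity in (ii) then follows from (i) and $\tau>0$.
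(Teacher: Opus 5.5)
Your derivation of the identity in (i) is correct and is essentially the paper's argument. The paper pairs \eqref{s11} with $\eta^{k1}$ and \eqref{s21} with $\eta^{k0}$ and subtracts; this is the same symmetry-of-$\mathcal{H}_k$ trick, with the same split $\alpha_i^k=0$ versus $\alpha_i^k\neq 0$, that you apply to the difference system. Your handling of $\theta_k$ in (ii) is also more careful than the paper's.

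\textbf{The gap is in the strict inequalities.} The ``$<0$'' in \eqref{dk01} comes from $\langle\eta^{k0},\mathcal{H}_k[\eta^{k0}]\rangle>0$, which needs $\eta^{k0}\neq 0_{x^k}$, and your argument never establishes this. It is not automatic. If $\eta^{k0}=0_{x^k}$, then \eqref{s12} and $\beta_i^k>0$ force $\lambda^{k0}=0$, so $v^{k1}=0$. Hence $(\eta^{k1},\lambda^{k1})=(0_{x^k},0)$, and by \eqref{s11} also ${\rm grad}F_{\rho_k}(x^k)=0_{x^k}$. Then $\langle{\rm grad}F_{\rho_k}(x^k),\eta^{k1}\rangle=0$, and the strict claims in both (i) and (ii) fail. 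So the hypothesis that $\eta^{k1}$ and $\eta^k$ are \emph{generated by Algorithm \ref{algo1}} carries real content, and your proof never uses it. Your bound $\langle{\rm grad}F_{\rho_k}(x^k),\eta^{k1}-\eta^{k2}\rangle<(1-\tau)\langle{\rm grad}F_{\rho_k}(x^k),\eta^{k1}\rangle<0$, which gives $\theta_k\in(0,1)$, relies on the same strictness.

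\textbf{The missing step,} which is how the paper opens its proof: Algorithm \ref{algo1} computes $\eta^{k1}$ only if the test in Step 3 fails. Suppose $\eta^{k0}=0_{x^k}$. Then $\lambda^{k0}=0$ as above, and all three conditions hold: (a) $\|\eta^{k0}\|=0\le r_1$; (b) $\lambda^{k0}_{\mathcal{E}}=0\ngeq r_2e$, since $r_2>0$ and $\mathcal{E}\neq\emptyset$; (c) $\lambda^{k0}=0\ge -r_3e$. So the algorithm would have increased $\rho_k$ and moved to the next iteration without computing $\eta^{k1}$. (If $\mathcal{E}=\emptyset$, the algorithm simply terminates at $\eta^{k0}=0_{x^k}$, with the same effect.) Hence $\eta^{k0}\neq 0_{x^k}$, \eqref{dk01} holds strictly, and the rest of your argument goes through unchanged.
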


In the final part of this section, we show that the arc search in Algorithm~\ref{algo1} is well-define.


\begin{lemma}\label{le4}
	The arc search procedure defined by \eqref{linef} and \eqref{lineg} can be terminated after a finite number of iterations.
\end{lemma}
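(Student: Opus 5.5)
To prove Lemma~\ref{le4} we show that both \eqref{linef} and \eqref{lineg} hold for every sufficiently small $t>0$. Since $t_k$ is the first element of $\{1,\varsigma,\varsigma^2,\dots\}$ satisfying both, and $\varsigma^j\to 0$, finiteness then follows. Only iterations that reach the arc search need treatment, i.e., those with $x^k\in{\rm int}\mathcal{F}$ and $\eta^k$ well defined. Fix such a $k$.

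Consider the curve $\gamma(t)=R_{x^k}(t\eta^k+t^2\tilde\eta^k)$ on $T_{x^k}\mathcal{M}$. It is smooth with $\gamma(0)=x^k$. By the retraction properties $R_x(0_x)=x$ and ${\rm D}R_x(0_x)={\rm id}_x$, the chain rule gives $\gamma'(0)=\eta^k$, because the $t^2\tilde\eta^k$ term has zero derivative at $t=0$. The composition $g(t):=F_{\rho_k}(\gamma(t))$ is $C^1$ near $0$ because $F_{\rho_k}$ is $C^2$ and $R$ is smooth, and its derivative at zero is $g'(0)={\rm D}F_{\rho_k}(x^k)[\eta^k]=\langle{\rm grad}F_{\rho_k}(x^k),\eta^k\rangle$. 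Hence
\begin{equation*}
	F_{\rho_k}(\gamma(t))=F_{\rho_k}(x^k)+t\langle{\rm grad}F_{\rho_k}(x^k),\eta^k\rangle+o(t).
\end{equation*}

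By Lemma~\ref{le3}(ii), $\langle{\rm grad}F_{\rho_k}(x^k),\eta^k\rangle<0$, and $\sigma<1$. It follows that
\begin{equation*}
	F_{\rho_k}(\gamma(t))-F_{\rho_k}(x^k)-\sigma t\langle{\rm grad}F_{\rho_k}(x^k),\eta^k\rangle=(1-\sigma)t\langle{\rm grad}F_{\rho_k}(x^k),\eta^k\rangle+o(t)<0
\end{equation*}
for all sufficiently small $t>0$. So \eqref{linef} holds on some interval $(0,\bar t_1]$.

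For \eqref{lineg} we use strict feasibility instead of a first-order expansion. Since $x^k\in{\rm int}\mathcal{F}$, we have $c_i(x^k)<0$ for every $i\in\mathcal{L}$. The map $t\mapsto c_i(\gamma(t))$ is continuous with value $c_i(x^k)$ at $t=0$, so by continuity and finiteness of $\mathcal{L}$ there is $\bar t_2>0$ with $c_i(\gamma(t))<0$ for all $i\in\mathcal{L}$ and $t\in[0,\bar t_2]$. This holds whatever $\tilde\eta^k$ is, including $\tilde\eta^k=0_{x^k}$. Consequently no delicate feasibility estimate, such as the $\|\eta^{k1}\|^\nu$ perturbation, is needed here; that estimate matters only for uniform step-size bounds later.

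Taking $\bar t=\min\{\bar t_1,\bar t_2\}$, every $\varsigma^j\le\bar t$ satisfies both conditions, so the search stops after at most $\lceil\log(\bar t)/\log\varsigma\rceil+1$ trials. The only point needing care is the first-order expansion along the arc, namely verifying $\gamma'(0)=\eta^k$ despite the quadratic correction term. The retraction axioms and the chain rule handle this directly, so the main substantive input is really the strict descent inequality from Lemma~\ref{le3}.
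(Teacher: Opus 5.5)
Your proposal is correct and follows essentially the same route as the paper: a first-order expansion of $F_{\rho_k}$ along the arc, combined with the strict descent inequality of Lemma~\ref{le3}(ii), gives \eqref{linef} for all small $t$. Continuity of $c_i\circ R_{x^k}$ at the strictly feasible point $x^k$ gives \eqref{lineg}, and you then take the minimum of the two thresholds; your explicit check that the arc has initial velocity $\eta^k$ (the $t^2\tilde\eta^k$ term contributes nothing to first order) simply makes precise what the paper leaves implicit in its Taylor expansion.
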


\section{Global convergence}\label{Sec:global}

In this section, we establish the global convergence of Algorithm \ref{algo1} under some reasonable assumptions.
In what follows, we assume that Algorithm \ref{algo1} generates an infinite sequence $\{x^k\}\subset{\rm int}\mathcal{F}$. 
For the sake of simplicity, we define
$\mathcal{I}(x)=\{i\in \mathcal{I}\mid c_i(x)=0\}$, $\mathcal{E}(x)=\{i\in \mathcal{E}\mid c_i(x)=0\}$, and $\mathcal{L}(x)=\{i\in \mathcal{L}\mid c_i(x)=0\}$.
First, we make the following conventional assumptions, which are commonly used in the related literature (see, e.g., \cite{qi2000new,tits2003primal,obara2022sequential}).

\begin{assumption}\label{A1}
	For all $x\in\mathcal{F}$, the LICQ of problem \eqref{TRpro} holds at $x$, i.e., the set of vectors $\{{\rm grad}c_i(x)\mid i\in \mathcal{L}(x)\}$ is linearly independent on $T_x\mathcal{M}$.
\end{assumption}
\begin{assumption}\label{A2}
	The sequence $\{x^k\}$ generated by Algorithm \ref{algo1} is bounded.
\end{assumption}
\begin{assumption}\label{A3}
	There exist two positive constants $a_1$ and $a_2$ such that, for any $k$, $a_1\|\xi\|^2\leq\langle\mathcal{H}_k[\xi],\xi\rangle\leq a_2\|\xi\|^2$ holds for all $\xi \in T_{x^k}\mathcal{M}$.
\end{assumption}
\begin{assumption}\label{A3.5}
	For any $x\in\mathcal{F}\backslash\mathcal{F}_P$, there exist no scalars $u_i\geq 0, i\in \mathcal{I}(x)$, and $v_i\geq 0, i\in \mathcal{E}(x)$, such that
	$$\sum_{i\in \mathcal{E}}{\rm grad}c_i(x)-\sum_{i\in \mathcal{I}(x)}u_i{\rm grad}c_i(x)-\sum_{i\in \mathcal{E}(x)}v_i{\rm grad}c_i(x)=0_{x}.$$
\end{assumption}

\begin{remark}
	When $x\in\mathcal{F}\backslash\mathcal{F}_P$, it is clear that Assumption \ref{A1} is weaker than the conventional assumption, which requires the set $\{{\rm grad}c_i(x)\mid i\in\mathcal{I}(x)\cup\mathcal{E}\}$ to be linearly independent. 
	Assumption \ref{A3.5} is only used to prove Lemma \ref{facanshu}. Moreover, if the set $\{{\rm grad}c_i(x)\mid i\in\mathcal{I}(x)\cup\mathcal{E}\}$ is linearly independent at $x\in\mathcal{F}\backslash\mathcal{F}_P$, then Assumption \ref{A3.5} holds naturally.
\end{remark}

The following lemma implies that the penalty parameter $\rho_k$ can be fixed after a finite number of iterations, and its proof is a variant of the one in \cite[Lem. 4.1]{tits2003primal}. For completeness, we give the proof in Appendix \ref{AppendixB}.
\begin{lemma}\label{facanshu}
	Suppose that Assumptions \ref{A1}-\ref{A3.5} hold. Then the penalty parameter $\rho_k$ in Algorithm \ref{algo1} is updated only a finite number of times.
\end{lemma}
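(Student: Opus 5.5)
We argue by contradiction, following the scheme of \cite[Lem. 4.1]{tits2003primal}. Suppose $\rho_k$ is increased infinitely often, so $\rho_k\to\infty$. Let $K$ be the infinite index set of iterations at which conditions (a)--(c) of step 3 hold. By Assumption \ref{A2} and the boundedness of $\mu^k$ (since $\mu_i^k\in[0,\bar\mu]$ by \eqref{gamma}), we can pass to a subsequence $K'\subseteq K$ along which $x^k\to x^*\in\mathcal{F}$, $\mu^k\to\mu^*$, $\alpha^k\to\alpha^*$ and $\beta^k\to\beta^*$. By Assumption \ref{A3} we may also take $\mathcal{H}_k\to\mathcal{H}^*$, interpreted through a vector transport or through local coordinates around $x^*$. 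Conditions (a) and (c) give $\|\eta^{k0}\|\le r_1$ and $\lambda^{k0}\ge -r_3e$.

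The key step is to divide \eqref{s11} by $\rho_k$. We have ${\rm grad}F_{\rho_k}=\mathrm{grad}f-\rho_k\sum_{i\in\mathcal{E}}\mathrm{grad}c_i$, and $\eta^{k0}$ is bounded. Hence
\begin{equation*}
\sum_{i\in\mathcal{E}}{\rm grad}c_i(x^k)-\sum_{i\in\mathcal{L}}\frac{\lambda_i^{k0}}{\rho_k}{\rm grad}c_i(x^k)=\frac{1}{\rho_k}\bigl(\mathcal{H}_k[\eta^{k0}]+{\rm grad}f(x^k)\bigr)\to 0.
\end{equation*}
We first show that $\lambda^{k0}/\rho_k$ is bounded. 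For inactive indices $i\notin\mathcal{L}(x^*)$ we have $c_i(x^k)\to c_i(x^*)<0$. Then \eqref{s12} gives $\sqrt2\beta_i^k\lambda_i^{k0}=\alpha_i^k\langle{\rm grad}c_i,\eta^{k0}\rangle$. By \eqref{alha-beta}, $\beta_i^k$ stays away from zero whenever $c_i$ stays away from zero, since $\beta_i^k\to0$ would require $c_i/\sqrt{c_i^2+\mu_i^2}\to0$. So these $\lambda_i^{k0}$ are bounded, and divided by $\rho_k$ they tend to zero. If the active components were unbounded relative to $\rho_k$, we would normalize by their maximum and obtain a nontrivial vanishing combination of $\{{\rm grad}c_i(x^*)\}_{i\in\mathcal{L}(x^*)}$, contradicting Assumption \ref{A1}. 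Passing to a further subsequence, $\lambda_i^{k0}/\rho_k\to w_i$, with $w_i=0$ for $i\notin\mathcal{L}(x^*)$ and $w_i\ge0$ for all $i$ by (c). In the limit,
\begin{equation*}
\sum_{i\in\mathcal{E}}{\rm grad}c_i(x^*)-\sum_{i\in\mathcal{L}(x^*)}w_i{\rm grad}c_i(x^*)=0_{x^*}.
\end{equation*}

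If $x^*\in\mathcal{F}\backslash\mathcal{F}_P$, this contradicts Assumption \ref{A3.5}, taking $u_i=w_i$ for $i\in\mathcal{I}(x^*)$ and $v_i=w_i$ for $i\in\mathcal{E}(x^*)$. If instead $x^*\in\mathcal{F}_P$, then $\mathcal{E}\subseteq\mathcal{L}(x^*)$, and Assumption \ref{A1} gives uniqueness of the representation. This forces $w_i=1$ for $i\in\mathcal{E}$, so $\lambda_i^{k0}/\rho_k\to1$ and hence $\lambda_i^{k0}\to+\infty$ for all $i\in\mathcal{E}$. Then $\lambda^{k0}_{\mathcal{E}}\ge r_2e$ for large $k\in K'$, which contradicts (b). In both cases the contradiction shows that $\rho_k$ is updated only finitely often.

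The main obstacle is the Riemannian passage to the limit. Tangent vectors $\eta^{k0}$, ${\rm grad}c_i(x^k)$ and $\mathcal{H}_k[\eta^{k0}]$ live in different tangent spaces. I would handle this by working in a chart around $x^*$, or by using continuity of $\mathrm{grad}c_i$ as vector fields on $T\mathcal{M}$. Only the bounded term $\mathcal{H}_k[\eta^{k0}]/\rho_k\to0$ is needed, and that follows directly from Assumption \ref{A3}. The linear-independence normalization argument must also be carried out in this setting, using continuity of the Gram matrix of $\{{\rm grad}c_i(x^k)\}_{i\in\mathcal{L}(x^*)}$.
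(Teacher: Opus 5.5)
Your argument is correct. It follows the same contradiction skeleton as the paper, but it uses a different normalization, and this changes where condition (b) does its work.

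The paper divides \eqref{s11} not by $\rho_k$ but by $z_k=\max\{\|\lambda^{k0}_{\mathcal{E}}-\rho_k e\|_\infty,\|\lambda^{k0}_{\mathcal{I}}\|_\infty,1\}$, that is, by the size of the shifted multipliers $(\lambda^{k0}_{\mathcal{I}},\lambda^{k0}_{\mathcal{E}}-\rho_k e)$ of the original problem. Condition (b) enters at the outset. Some $i\in\mathcal{E}$ has $\lambda^{k0}_i-\rho_k<r_2-\rho_k\to-\infty$, so $z_k\to\infty$ and the bounded terms $\mathcal{H}_k[\eta^{k0}]+\mathrm{grad}\,f(x^k)$ drop out. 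The normalized multipliers have unit max-norm by construction, so no separate boundedness argument is needed. Assumption \ref{A1} applied to the resulting nontrivial limit identity then excludes $x^*\in\mathcal{F}_P$ and forces $w=\lim\rho_k/z_k>0$. Dividing by $-w$ produces exactly the combination forbidden by Assumption \ref{A3.5}.

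In your version the coefficient of $\sum_{i\in\mathcal{E}}\mathrm{grad}\,c_i$ is $1$ from the start. So Assumption \ref{A3.5} applies verbatim with $u_i=v_i=w_i$, and there is no $w>0$ to check. The cost is twofold. You need an extra Assumption \ref{A1} normalization to show $\lambda^{k0}/\rho_k$ stays bounded. You also need a separate feasible case, where (b) is used only at the end via $w_{\mathcal{E}}=e\Rightarrow\lambda^{k0}_{\mathcal{E}}\to+\infty$.

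The paper's single normalization is more compact. Yours makes the distinct roles of (a), (c) and (b) more transparent: (a) kills the operator term, (c) gives the signs, and (b) excludes a feasible limit.

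One small citation slip: the fact that $\beta_i^k$ stays away from zero for inactive $i$ follows from the formula \eqref{alpha} together with $\mu_i^k\le\bar{\mu}$, not from \eqref{alha-beta}.
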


From Lemma \ref{facanshu}, we know there exist an index $\bar{k}\geq 0$ and
a constant $\bar{\rho}>0$ such that $\rho_k\equiv \bar{\rho}$ for all $k\geq \bar{k}$.
Without loss of generality, we may assume that $\bar{k}=0$ in the subsequent discussion. Thus, the problem \eqref{TRpro} refers specifically to the case of $\rho\equiv \bar{\rho}$. Accordingly, we can also assume that $\eta^{k0}\neq 0_{x^k}$ for all $k$; otherwise $\rho_k$ would be increased.
In addition, from Lemma \ref{le3} (ii) and \eqref{gamma}, we know that 
$\mu_i^k>0, i\in \mathcal{L}$, for all $k$, which further implies that $\alpha_i^k>0, i\in \mathcal{L}$. 

We have already proved (in Lemma \ref{le1}) that $\mathcal{A}_k$ is nonsingular for all $k$. Therefore $\mathcal{A}_k^{-1}$ exists for all $k$. 
We next establish the boundedness of the sequence $\{\|\mathcal{A}_k^{-1}\|_{\rm op}\}$ in Lemma~\ref{le5}. Since the proof follows the same lines as Lemma 3.1 in \cite{qi2000new}, we omit the details here. Based on Lemma~\ref{le5}, Lemma~\ref{le6} further establishes the boundedness of the solutions to the three linear systems \eqref{s1}, \eqref{s2}, and \eqref{s3}. To analyze the global convergence, we first present Lemma~\ref{le8} and Lemma~\ref{le9}, which guarantee that any accumulation point of $\{x^k\}$ is a KKT point of problem \eqref{TRpro}. The proofs of Lemmas~\ref{le6}--\ref{le9} are deferred to Appendix~\ref{AppendixB}.

\begin{lemma}\label{le5}
	Suppose that Assumptions \ref{A1}-\ref{A3.5} hold. Then the sequence $\{\|\mathcal{A}_k^{-1}\|_{\rm op}\}$ is bounded.
\end{lemma}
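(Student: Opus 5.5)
We argue by contradiction combined with compactness. Suppose $\{\|\mathcal{A}_k^{-1}\|_{\rm op}\}$ is unbounded. Then there is an infinite index set $K$ with $\|\mathcal{A}_k^{-1}\|_{\rm op}\to\infty$ along $K$. Equivalently, there are unit vectors $(\eta^k,\lambda^k)\in T_{x^k}\mathcal{M}\times\mathbb{R}^{m+\ell}$ with $\mathcal{A}_k(\eta^k,\lambda^k)\to 0$ on $K$.

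To pass to a limit, we use the remaining compactness in the data. By Assumption \ref{A2} and completeness of $\mathcal{M}$ (Hopf--Rinow), $\{x^k\}$ lies in a compact set, and $\{x^k\}\subset\mathcal{F}$. The coefficients are bounded: $\alpha_i^k\in[0,1)$ and $\beta_i^k\in(0,\sqrt2)$ by \eqref{alha-beta}. The operators $\mathcal{H}_k$ are uniformly bounded and uniformly positive definite by Assumption \ref{A3}. To compare objects living in different tangent spaces, we embed them in a common space. Either take $\mathcal{M}$ embedded in some $\mathbb{R}^N$, or work in a local orthonormal frame near the limit point $x^*$. Passing to a further subsequence, we then get $x^k\to x^*\in\mathcal{F}$, $\alpha^k\to\alpha^*$, $\beta^k\to\beta^*$, $\mathcal{H}_k\to\mathcal{H}_*$ (still symmetric positive definite with $a_1\|\xi\|^2\le\langle\mathcal{H}_*\xi,\xi\rangle$), and $(\eta^k,\lambda^k)\to(\eta^*,\lambda^*)$ with $\|(\eta^*,\lambda^*)\|=1$. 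The limit inequality $(\alpha_i^*)^2+(\beta_i^*)^4\ge 3-2\sqrt2$ survives, and the gradients ${\rm grad}c_i(x^k)$ converge to ${\rm grad}c_i(x^*)$ by continuity of the $c_i$ being $C^2$.

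In the limit we obtain the homogeneous system
\[
\mathcal{H}_*[\eta^*]+\sum_{i\in\mathcal{L}}\lambda_i^*{\rm grad}c_i(x^*)=0,\qquad \alpha_i^*\langle{\rm grad}c_i(x^*),\eta^*\rangle-\sqrt2\beta_i^*\lambda_i^*=0,\ i\in\mathcal{L}.
\]
We now show this forces $(\eta^*,\lambda^*)=0$, which is the contradiction. First handle indices with $\beta_i^*=0$. Then $\alpha_i^*>0$ by the inequality above. One checks from \eqref{alpha} that $\beta_i^*=0$ only when $c_i\to0$ with $\mu_i$ asymptotically dominant, so $i\in\mathcal{L}(x^*)$, and the second equation gives $\langle{\rm grad}c_i(x^*),\eta^*\rangle=0$. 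For indices with $\alpha_i^*=0$, we have $\beta_i^*>0$ and hence $\lambda_i^*=0$. For the remaining indices, $\lambda_i^*=\alpha_i^*\langle{\rm grad}c_i,\eta^*\rangle/(\sqrt2\beta_i^*)$.

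Next take the inner product of the first equation with $\eta^*$:
\[
\langle\mathcal{H}_*\eta^*,\eta^*\rangle+\sum_{i:\alpha_i^*\beta_i^*>0}\frac{\sqrt2\beta_i^*}{\alpha_i^*}(\lambda_i^*)^2=0.
\]
The terms with $\beta_i^*=0$ vanish because $\langle{\rm grad}c_i,\eta^*\rangle=0$ there, and those with $\alpha_i^*=0$ vanish because $\lambda_i^*=0$. Every remaining term is nonnegative, so $\eta^*=0$ and $\lambda_i^*=0$ whenever $\beta_i^*>0$. The first equation then reduces to $\sum_{i:\beta_i^*=0}\lambda_i^*{\rm grad}c_i(x^*)=0$ with all such $i\in\mathcal{L}(x^*)$. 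Assumption \ref{A1} (LICQ at $x^*\in\mathcal{F}$) gives $\lambda^*=0$, contradicting $\|(\eta^*,\lambda^*)\|=1$.

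The main obstacle is the bookkeeping of the degenerate limits. The key point is that $\beta_i^*=0$ indeed forces $c_i(x^*)=0$. This holds because $\beta_i^k\to0$ requires $c_i(x^k)/\mu_i^k\to0$; since $\mu_i^k\le\bar\mu$ is bounded, this forces $c_i(x^k)\to0$. This step is exactly where the upper bound $\bar\mu$ in \eqref{gamma} is used. The other delicate point is passing to the limit across varying tangent spaces, which the embedding or local-frame argument handles in the standard way.
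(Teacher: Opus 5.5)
Your argument is correct and is essentially the proof the paper has in mind (it defers to Lemma 3.1 of Qi--Qi). The ingredients match: compactness plus contradiction; injectivity of the limiting operator via $\langle\mathcal{H}_*\eta^*,\eta^*\rangle+\sum_{i:\,\alpha_i^*\beta_i^*>0}\sqrt{2}\beta_i^*(\lambda_i^*)^2/\alpha_i^*=0$; the bound $(\alpha_i^*)^2+(\beta_i^*)^4\ge 3-2\sqrt{2}$; the cap $\mu_i^k\le\bar\mu$, which forces $c_i(x^*)=0$ whenever $\beta_i^*=0$; and LICQ at the limit point. This is the same reasoning the paper itself uses for the nonsingularity of $\mathcal{A}_*$ in the proof of Theorem~\ref{th2}, and your explicit check that $\beta_i^*=0$ implies $i\in\mathcal{L}(x^*)$ is a detail the paper leaves implicit.
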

\begin{lemma}\label{le6} Suppose that Assumptions \ref{A1}-\ref{A3.5} hold. Then we have (i) the sequences $\{(\eta^{k0},\lambda^{k0})\}$, $\{(\eta^{k1},\lambda^{k1})\}$ and $\{(\eta^{k2},\lambda^{k2})\}$ are all bounded; (ii) there exists a constant $C>0$ such that $\|\eta^k-\eta^{k1}\|\leq C\|\eta^{k1}\|^\nu,\ \forall k\in\mathbb{N}$.
\end{lemma}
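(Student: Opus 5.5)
Both parts of Lemma~\ref{le6} follow from the representation \eqref{relationship} combined with the uniform bound $\|\mathcal{A}_k^{-1}\|_{\rm op}\le M$ from Lemma~\ref{le5}. The idea is to bound the right-hand sides uniformly in $k$ and then use linearity of $\mathcal{A}_k^{-1}$.

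\emph{Part (i).} First, $\{x^k\}$ is bounded by Assumption~\ref{A2}, $f$ and $c_i$ are $C^2$, and $\rho_k\equiv\bar\rho$ (Lemma~\ref{facanshu}). Hence $\|{\rm grad}F_{\bar\rho}(x^k)\|$ is uniformly bounded. To make this precise, I would take the closure of the bounded set $\{x^k\}$ in the complete manifold $\mathcal{M}$; it is compact by Hopf--Rinow, and the continuous function $x\mapsto\|{\rm grad}F_{\bar\rho}(x)\|_x$ is bounded on it. The same argument bounds $\|{\rm grad}c_i(x^k)\|$.

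The bounds then follow in sequence:
\begin{itemize}
\item By \eqref{relationship1}, $\|(\eta^{k0},\lambda^{k0})\|\le M\|{\rm grad}F_{\bar\rho}(x^k)\|$, so $\{(\eta^{k0},\lambda^{k0})\}$ is bounded.
\item Since $\alpha_i^k\in[0,1)$ by \eqref{alha-beta}, the vector $v^{k1}$ in \eqref{v1} satisfies $|v^{k1}_i|\le|\lambda_i^{k0}|^3$, which is bounded. So $\{(\eta^{k1},\lambda^{k1})\}$ is bounded by \eqref{relationship2}.
\item Likewise, $|v^{k2}_i|\le|\lambda_i^{k0}|^3+\|\eta^{k1}\|^\nu$ is bounded, so \eqref{relationship3} gives boundedness of $\{(\eta^{k2},\lambda^{k2})\}$.
\end{itemize}
The norm on the product space is taken as the natural one, e.g. $\|(\eta,\lambda)\|^2=\|\eta\|^2+\|\lambda\|^2$.

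\emph{Part (ii).} By linearity, subtracting \eqref{relationship2} from \eqref{relationship3} gives
\[
(\eta^{k2}-\eta^{k1},\lambda^{k2}-\lambda^{k1})=\mathcal{A}_k^{-1}\bigl(0_{x^k},\,-\|\eta^{k1}\|^\nu(\alpha_i^k,\ i\in\mathcal{L})\bigr).
\]
Since each $\alpha_i^k<1$, this yields $\|\eta^{k2}-\eta^{k1}\|\le M\sqrt{m+\ell}\,\|\eta^{k1}\|^\nu$.

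From \eqref{direction} we have $\eta^k-\eta^{k1}=\theta_k(\eta^{k2}-\eta^{k1})$. It therefore remains to show $\theta_k\in[0,1]$. In the first case of \eqref{thetak}, $\theta_k=1$. Otherwise, write $g_j=\langle{\rm grad}F_{\bar\rho}(x^k),\eta^{kj}\rangle$. In this case $g_2>\tau g_1$, and $g_1<0$ by Lemma~\ref{le3}(i). Then $g_1-g_2<(1-\tau)g_1<0$, so
\[
\theta_k=\frac{(1-\tau)g_1}{g_1-g_2}\in(0,1),
\]
because numerator and denominator are both negative and the numerator is larger, i.e. less negative. Hence $\|\eta^k-\eta^{k1}\|\le C\|\eta^{k1}\|^\nu$ with $C=M\sqrt{m+\ell}$.

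The main point requiring care is the uniform bound on the gradients along the iterates. On a general manifold, boundedness in Riemannian distance gives compactness only through completeness (Hopf--Rinow), and this is where the completeness assumption on $\mathcal{M}$ is used.
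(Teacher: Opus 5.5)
Your proposal is correct and follows the paper's proof. Part (i) chains the uniform bound on $\|\mathcal{A}_k^{-1}\|_{\rm op}$ through \eqref{relationship1}--\eqref{relationship3}, and part (ii) writes $\eta^k-\eta^{k1}=\theta_k\,\pi^k_1(\mathcal{A}_k^{-1}(0_{x^k},\Delta v^k))$ with $C=\sqrt{m+\ell}\,\sup_k\|\mathcal{A}_k^{-1}\|_{\rm op}$. Your explicit verification that $\theta_k\in[0,1]$ (the paper only notes $\theta_k\le 1$) and the Hopf--Rinow justification for the bounded gradients are useful added details rather than a different route.
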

\begin{lemma}\label{le7}
	Suppose that Assumptions \ref{A1}-\ref{A3.5} hold. If there is an index set $\mathcal{K}\subseteq \mathbb{N}$ such that $\{x^k\}_{\mathcal{K}}\rightarrow x^*$ and $\{\eta^k\}_{\mathcal{K}}\rightarrow 0_{x^*}$, then $x^*$ is a KKT point of problem \eqref{TRpro}, and the sequence $\{\lambda^{k0}\}_{\mathcal{K}}$ converges to the unique KKT multiplier corresponding to $x^*$.
\end{lemma}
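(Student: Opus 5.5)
The plan is to pass to limits in the linear systems along $\mathcal{K}$ and read off the KKT conditions of \eqref{TRpro} from the limiting equations. Two facts are used throughout. By Lemma \ref{le6}(i), all sequences $\{(\eta^{kj},\lambda^{kj})\}$ are bounded. By Lemma \ref{le5}, $\{\|\mathcal{A}_k^{-1}\|_{\rm op}\}$ is bounded.

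\textbf{Step 1: the auxiliary directions vanish.} First I show that $\eta^{k1}\to 0$ and $\eta^{k0}\to 0$ along $\mathcal{K}$.
\begin{itemize}
\item From $\{\eta^k\}_{\mathcal K}\to 0$ and Lemma \ref{le6}(ii), $\|\eta^{k1}\|\le \|\eta^k\|+C\|\eta^{k1}\|^\nu$. This alone does not force $\eta^{k1}\to0$, so I use descent instead.
\item Lemma \ref{le3}(ii) together with \eqref{dk01} and Lemma \ref{le3}(i) gives
\[
\tau\bigl(a_1\|\eta^{k0}\|^2+\textstyle\sum_{\lambda^{k0}_i<0}(\lambda_i^{k0})^4\bigr)\le -\langle{\rm grad}F_{\bar\rho}(x^k),\eta^k\rangle .
\]
\item The right-hand side tends to $0$ on $\mathcal{K}$, since ${\rm grad}F_{\bar\rho}(x^k)$ is bounded by Assumption \ref{A2} and continuity. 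Hence $\eta^{k0}\to0$ and $\min\{\lambda^{k0}_i,0\}\to0$.
\item The right-hand sides of \eqref{s1} and \eqref{s2} then differ by $v^{k1}\to0$. Bounded $\mathcal{A}_k^{-1}$ gives $\eta^{k1}-\eta^{k0}\to0$ and $\lambda^{k1}-\lambda^{k0}\to 0$.
\end{itemize}

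\textbf{Step 2: limiting stationarity equation.} Take any further subsequence on which $\lambda^{k0}\to\lambda^*$, $\alpha^k\to\alpha^*$, $\beta^k\to\beta^*$ (all bounded) and $\mathcal{H}_k[\eta^{k0}]\to0$. Passing to the limit in \eqref{s11}, using continuity of ${\rm grad}\,c_i$ and of ${\rm grad}F_{\bar\rho}$ along $x^k\to x^*$, yields
\[
{\rm grad}F_{\bar\rho}(x^*)+\sum_i\lambda^*_i{\rm grad}c_i(x^*)=0 .
\]
Step 1 gives $\lambda^*\ge0$, and feasibility $c_i(x^*)\le0$ follows from $x^k\in{\rm int}\mathcal F$.

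\textbf{Step 3: complementarity (the main obstacle).} I need $\lambda^*_i=0$ whenever $c_i(x^*)<0$. Passing to the limit in \eqref{s12} gives $\sqrt2\beta^*_i\lambda^*_i=0$, so it suffices to show $\beta^*_i>0$ for such $i$. By \eqref{alpha}, $\beta_i^k\to0$ can happen only if $c_i(x^k)/\mu_i^k\to0$, which would require $\mu_i^k\to\infty$. This is excluded because $\mu_i^k\le\bar\mu$ by \eqref{gamma}. Hence $\beta_i^*>0$ and $\lambda_i^*=0$, with no need to control $\mu^k$ further. This gives complementarity, so $x^*$ is a KKT point of \eqref{TRpro}.

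\textbf{Step 4: uniqueness and full convergence.} Assumption \ref{A1} at $x^*$ makes the active gradients linearly independent, so the KKT multiplier is unique. Every convergent subsequence of the bounded sequence $\{\lambda^{k0}\}_{\mathcal K}$ has this same limit, hence the whole sequence converges to it.
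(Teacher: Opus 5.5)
Your proof is correct and follows essentially the same route as the paper: the descent chain from Lemma~\ref{le3}, \eqref{dk01} and Assumption~\ref{A3} forces $\eta^{k0}\to 0_{x^*}$ and $\min\{\lambda_i^{k0},0\}\to 0$ along $\mathcal{K}$. Passing to the limit in \eqref{s1}, with $\beta_i^*>0$ for inactive constraints because $\mu_i^k\le\bar{\mu}$ by \eqref{gamma}, gives the KKT conditions, and Assumption~\ref{A1} gives uniqueness of the multiplier and hence convergence of the whole subsequence. The differences are cosmetic: you get $\beta_i^*>0$ directly from the bound $\mu_i^k\le\bar{\mu}$ instead of first extracting a limit $\mu^*$ as the paper does, and your extra observation that $\eta^{k1}-\eta^{k0}\to 0$ is not needed for this lemma.
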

\begin{lemma}\label{le8}
	Suppose that Assumptions \ref{A1}-\ref{A3.5} hold. Let $\mathcal{K}\subseteq\mathbb{N}$ such that $\{x^k\}_{\mathcal{K}}\rightarrow x^*$. If $\{\|\eta^{k-1}\|\}_{\mathcal{K}}\rightarrow 0$, then $x^*$ is a KKT point of problem \eqref{TRpro}.
\end{lemma}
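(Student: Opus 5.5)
We must prove Lemma \ref{le8}: if $\{x^k\}_{\mathcal{K}}\to x^*$ and $\|\eta^{k-1}\|\to 0$ on $\mathcal{K}$, then $x^*$ is a KKT point of \eqref{TRpro}. The plan is to reduce this to Lemma \ref{le7} along a suitable subsequence.

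The key observation is that $\eta^k$ is built from $\mu^k$. Moreover, the update \eqref{gamma} ties $\mu^k$ to $\lambda^{(k-1)0}$ and $\|\eta^{k-1}\|$. So if $\|\eta^{k-1}\|\to0$, the previous iterate $x^{k-1}$ should also approach $x^*$. One can then apply Lemma \ref{le7} to the index set $\mathcal{K}-1=\{k-1: k\in\mathcal{K}\}$.

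\textbf{Step 1: $x^{k-1}\to x^*$ along $\mathcal{K}$.} Since $\rho$ is fixed (we assume $\bar k=0$), we have $x^{k}=R_{x^{k-1}}(t_{k-1}\eta^{k-1}+t_{k-1}^2\tilde\eta^{k-1})$. Here $t_{k-1}\le1$ and $\|\tilde\eta^{k-1}\|\le\|\eta^{k-1}\|$, so the step vector $\zeta^{k-1}$ satisfies $\|\zeta^{k-1}\|\le 2\|\eta^{k-1}\|\to0$. By Assumption \ref{A2} the points $x^{k-1}$ lie in a compact set. A standard retraction property on compact sets gives $\mathrm{dist}(x^{k-1},R_{x^{k-1}}(\zeta))\le c\|\zeta\|$ for small $\zeta$, which follows from smoothness of $R$ and $\mathrm{D}R_x(0_x)=\mathrm{id}$. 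Hence $\mathrm{dist}(x^{k-1},x^k)\to0$, and therefore $x^{k-1}\to x^*$ along $\mathcal{K}$.

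\textbf{Step 2: invoke Lemma \ref{le7}.} With $\mathcal{K}'=\{k-1:k\in\mathcal{K}\}$, we have $\{x^{j}\}_{\mathcal{K}'}\to x^*$ and $\{\eta^{j}\}_{\mathcal{K}'}\to 0$. The limit $\eta^j\to0$ is understood in the tangent bundle, i.e. $(x^j,\eta^j)\to(x^*,0_{x^*})$, which follows since $\|\eta^j\|\to 0$ and the base points converge. Lemma \ref{le7} then yields that $x^*$ is a KKT point of \eqref{TRpro}.

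\textbf{Main obstacle.} The main point requiring care is Step 1: the uniform local Lipschitz-type bound on the retraction. I would justify it by working in a chart around $x^*$. In that chart $(x,\zeta)\mapsto R_x(\zeta)$ is smooth on a neighborhood of $(x^*,0)$ in $T\mathcal{M}$, so it is Lipschitz there with $R_x(0)=x$. This gives $\mathrm{dist}(x,R_x(\zeta))\le c\|\zeta\|$ uniformly for $x$ near $x^*$.

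There is one subtlety: we do not know a priori that $x^{k-1}$ is near $x^*$. To handle this, pass to a further subsequence on which $x^{k-1}\to\hat x$, which exists by Assumption \ref{A2}. Applying the same chart argument at $\hat x$ gives $\mathrm{dist}(x^{k-1},x^k)\to0$, hence $\hat x=x^*$. Since this holds for every such subsequence, the conclusion follows along the full set $\mathcal{K}$.
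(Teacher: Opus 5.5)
Your proposal is correct and is essentially the paper's argument. You pass to a subsequence on which $x^{k-1}\to\hat x$ (possible by Assumption \ref{A2}), use $\|t_{k-1}\eta^{k-1}+t_{k-1}^2\tilde\eta^{k-1}\|\le 2\|\eta^{k-1}\|\to 0$ together with the retraction to force $\hat x=x^*$, and apply Lemma \ref{le7} along the shifted indices. The differences are cosmetic: the paper applies Lemma \ref{le7} first and identifies the limit afterwards, and it uses only continuity of $R$ on $T\mathcal{M}$ at $(\hat x,0_{\hat x})$, which is all your subsequence argument needs, so the Lipschitz-type bound is unnecessary.
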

\begin{lemma}\label{le9}
	Suppose that Assumptions \ref{A1}-\ref{A3.5} hold. Let $\mathcal{K}\subseteq\mathbb{N}$ such that $\{x^k\}_{\mathcal{K}}\rightarrow x^*$. If $\inf\{\|\eta^{k-1}\|\}_{\mathcal{K}}> 0$, then $x^*$ is a KKT point of problem \eqref{TRpro}.
\end{lemma}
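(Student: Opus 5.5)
The plan is to reduce Lemma~\ref{le9} to Lemma~\ref{le7}. Concretely, I will show that along a further subsequence of $\mathcal{K}$ the current master directions satisfy $\eta^k\to 0_{x^*}$; Lemma~\ref{le7} then says that $x^*$ is a KKT point of \eqref{TRpro}.

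The hypothesis $\inf_{k\in\mathcal{K}}\|\eta^{k-1}\|>0$ is used only through the multiplier update \eqref{gamma}. Since $\mu_i^{k}=\min\{\max\{\lambda_i^{k-1,0},\|\eta^{k-1}\|\},\bar\mu\}$, we get $\mu_i^k\ge \underline{\mu}:=\min\{\inf_{\mathcal{K}}\|\eta^{k-1}\|,\bar\mu\}>0$ for all $i\in\mathcal{L}$ and $k\in\mathcal{K}$. Together with $\mu_i^k\le\bar\mu$ this keeps $(c_i(x^k),\mu_i^k)$ in a compact set away from $(0,0)$, so the functions in \eqref{alpha} are continuous there. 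After passing to a subsequence (Assumptions~\ref{A2}, \ref{A3}, Lemma~\ref{le6}) we may assume that $\alpha^k\to\alpha^*$, $\beta^k\to\beta^*$, $\eta^{kj}\to\eta^{*j}$, $\lambda^{kj}\to\lambda^{*j}$, $\theta_k\to\theta^*$ and $\mathcal{H}_k\to\mathcal{H}_*$, where these limits are understood in a fixed local chart/frame around $x^*$. For every $i\in\mathcal{L}(x^*)$ we have $c_i(x^k)\to0$ and $\mu_i^k\ge\underline\mu$, hence $\alpha_i^k\to1$ and $\beta_i^k\to0$. For $i\notin\mathcal{L}(x^*)$ the constraint stays bounded away from zero near $x^*$, so it imposes no restriction on small steps.

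I then argue by contradiction: suppose $\|\eta^k\|\ge\epsilon>0$ on the subsequence.

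\emph{Descent margin.} By \eqref{dk01}, Lemma~\ref{le3} and Assumption~\ref{A3},
\[
\langle{\rm grad}F_{\bar\rho}(x^k),\eta^k\rangle\le\tau\langle{\rm grad}F_{\bar\rho}(x^k),\eta^{k1}\rangle\le -\tau a_1\|\eta^{k0}\|^2 .
\]
Lemma~\ref{le6}(ii) shows that $\|\eta^{k0}\|$ cannot tend to zero if $\|\eta^k\|$ does not: if $\eta^{k0}\to0$, then $\lambda^{k0}$ converges by the argument of Lemma~\ref{le7}, the cubic terms $(\min\{\lambda_i^{k0},0\})^3$ tend to zero, and hence $\eta^{k1},\eta^k\to0$. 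Therefore $\langle{\rm grad}F_{\bar\rho}(x^k),\eta^k\rangle\le-\bar\delta<0$.

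\emph{Feasibility margin.} For $i\in\mathcal{L}(x^*)$, divide \eqref{s42} by $\alpha_i^k>0$:
\[
\langle{\rm grad}c_i(x^k),\eta^k\rangle=\frac{\sqrt2\beta_i^k\lambda_i^k}{\alpha_i^k}+(\min\{\lambda_i^{k0},0\})^3-\theta_k\|\eta^{k1}\|^\nu .
\]
The first term tends to zero because $\beta_i^k\to0$, $\alpha_i^k\to1$ and $\lambda_i^k$ is bounded. So the right-hand side is eventually at most $-\theta_k\|\eta^{k1}\|^\nu/2$. Here $\|\eta^{k1}\|$ is bounded below (again via Lemma~\ref{le6}(ii)). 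A lower bound $\theta_k\ge\underline\theta>0$ comes from \eqref{thetak}, because its denominator is bounded by Lemma~\ref{le6}(i) while its numerator is at most $-(1-\tau)\tau^{-1}\bar\delta$ in absolute terms bounded away from zero.

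With both margins, a Taylor expansion of $t\mapsto F_{\bar\rho}(R_{x^k}(t\eta^k+t^2\tilde\eta^k))$ and of $t\mapsto c_i(R_{x^k}(t\eta^k+t^2\tilde\eta^k))$ applies. It uses the smoothness of $R$ on a compact set of $T\mathcal{M}$ and $\|\tilde\eta^k\|\le\|\eta^k\|$. The expansion shows that \eqref{linef}--\eqref{lineg} hold for all $t\in(0,\bar t]$, with $\bar t>0$ independent of $k$, so $t_k\ge\varsigma\bar t$ along the subsequence.

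The sequence $\{F_{\bar\rho}(x^k)\}$ is nonincreasing by \eqref{linef}. It is bounded below, since $\{x^k\}$ lies in a compact set by Assumption~\ref{A2}, and hence it converges. Then
\[
F_{\bar\rho}(x^{k+1})-F_{\bar\rho}(x^k)\le-\sigma\varsigma\bar t\,\bar\delta
\]
on an infinite set, which is a contradiction. Hence $\eta^k\to0_{x^*}$ along a subsequence of $\mathcal{K}$, and Lemma~\ref{le7} yields that $x^*$ is a KKT point of \eqref{TRpro}.

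The main obstacle is the feasibility margin, i.e.\ the uniform lower bound on $t_k$ coming from \eqref{lineg} at constraints active at $x^*$. This is exactly where $\mu^k\ge\underline\mu$ is essential: it forces $\beta_i^k\to0$, so the perturbation $-\theta_k\|\eta^{k1}\|^\nu$ dominates. I would first check carefully that $\theta_k$ and $\|\eta^{k1}\|$ stay bounded away from zero under the contradiction hypothesis.
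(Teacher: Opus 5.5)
Your overall plan is the same as the paper's. You establish a uniform descent margin, then a $k$-independent lower bound on $t_k$ from \eqref{linef}--\eqref{lineg}, where $\mu_i^k\ge\underline\mu$ forces $\beta_i^k\to0$ on $\mathcal{L}(x^*)$ so that $-\theta_k\|\eta^{k1}\|^\nu$ dominates. The contradiction then comes from $F_{\bar\rho}$ being bounded below. Negating $\liminf_{\mathcal K}\|\eta^k\|=0$ and finishing with Lemma~\ref{le7} is equivalent to the paper's negation of ``$x^*$ is KKT''.

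\textbf{The gap is in your descent-margin step.} You discard the quartic term of Lemma~\ref{le3}(i) and claim that $\eta^{k0}\to0$ forces $\eta^{k1},\eta^k\to0$ because the cubic terms vanish. That requires the limit of $\lambda^{k0}$ to be nonnegative. In Lemma~\ref{le7} this sign information came from $\sum_{i:\lambda_i^{k0}<0}(\lambda_i^{k0})^4\to0$, which follows from $\eta^k\to0$, not from $\eta^{k0}\to0$. With only $\eta^{k0}\to0$, the limit multiplier satisfies stationarity and vanishes off $\mathcal{L}(x^*)$, but it may have negative entries. So $x^*$ may be a stationary non-KKT point, which is exactly what the lemma must exclude.

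In that case your claim is false. Take a one-dimensional instance of \eqref{s1}--\eqref{s2}: $\mathcal{H}_k=1$, ${\rm grad}F_{\bar\rho}(x^k)=1$, one constraint with ${\rm grad}c(x^k)=1$, and $\alpha^k\to1$, $\beta^k\to0$, which is what happens on $\mathcal{L}(x^*)$ under $\mu^k\ge\underline\mu$.
\begin{itemize}
\item \eqref{s1} gives $\lambda^{k0}=-\alpha^k/(\alpha^k+\sqrt2\beta^k)\to-1$ and $\eta^{k0}\to0$.
\item \eqref{s2} gives $\eta^{k1}=(\alpha^k(\lambda^{k0})^3-\sqrt2\beta^k)/(\alpha^k+\sqrt2\beta^k)\to-1$.
\item By Lemma~\ref{le3}(ii), $\eta^k$ also stays away from zero.
\end{itemize}
So $\|\eta^k\|\ge\epsilon$ is compatible with $\eta^{k0}\to0$. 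Your bound $-\tau a_1\|\eta^{k0}\|^2$ then gives no margin, and neither does the bound on the numerator of \eqref{thetak} that you derive from it.

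\textbf{The repair is to keep the quartic term.} By \eqref{dk01} and Lemma~\ref{le3}(i),
$\langle{\rm grad}F_{\bar\rho}(x^k),\eta^{k1}\rangle\le-a_1\|\eta^{k0}\|^2-\sum_{i:\lambda_i^{k0}<0}(\lambda_i^{k0})^4$.
Suppose the right-hand side tended to $0$ along a subsequence.
\begin{itemize}
\item Then $\eta^{k0}\to0$ and $\min\{\lambda_i^{k0},0\}\to0$, so $v^{k1}\to0$ in \eqref{v1}.
\item By \eqref{relationship22} and Lemma~\ref{le5}, $\eta^{k1}-\eta^{k0}=\pi_1^k(\mathcal{A}_k^{-1}(0_{x^k},v^{k1}))\to0$, hence $\eta^{k1}\to0$.
\item By Lemma~\ref{le6}(ii), $\eta^k\to0$, contradicting $\|\eta^k\|\ge\epsilon$.
\end{itemize}
This yields $\langle{\rm grad}F_{\bar\rho}(x^k),\eta^{k1}\rangle\le-\varepsilon$, which is the paper's \eqref{proof-le9-1}. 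Use it both to get $\langle{\rm grad}F_{\bar\rho}(x^k),\eta^k\rangle\le-\tau\varepsilon$ and to bound $\theta_k$ away from zero. With that change, the rest of your argument is correct and essentially coincides with the paper's proof: the feasibility margin on $\mathcal{L}(x^*)$, the uniform $\bar t$, and the final contradiction.
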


Based on the above lemmas, we can establish the global convergence of Algorithm \ref{algo1}.

\begin{theorem}\label{th2}
	Suppose that Assumptions \ref{A1}-\ref{A3.5} hold. If there exists an infinite index set $\mathcal{K}\subseteq\mathbb{N}$ such that $\{(x^k,\lambda^{k0})\}_{\mathcal{K}}\rightarrow (x^*,\lambda^*)$, then either 
	$(x^*,\lambda^*)$ is a KKT pair of problem \eqref{TRpro}, or there exists an infinite index $\mathcal{K}'\subseteq\mathcal{K}$ such that $\{\lambda^{(k-1)0}\}_{\mathcal{K}'} \rightarrow \lambda^{**}$ and $(x^*,\lambda^{**})$ is a KKT pair of problem \eqref{TRpro}. Furthermore, $x^*$ is a KKT point of problem \eqref{Rpro}.
\end{theorem}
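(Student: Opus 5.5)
The argument splits on the behaviour of $\{\eta^{k-1}\}_{\mathcal{K}}$. Throughout, $\rho_k\equiv\bar\rho$ by Lemma \ref{facanshu}.

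\textbf{Case 1: $\{\eta^k\}_{\mathcal{K}}$ has a subsequence tending to $0$.} Pass to that subsequence $\mathcal{K}_1\subseteq\mathcal{K}$. Lemma \ref{le7} then says that $x^*$ is a KKT point of \eqref{TRpro} and that $\{\lambda^{k0}\}_{\mathcal{K}_1}$ converges to the unique KKT multiplier at $x^*$; uniqueness comes from Assumption \ref{A1}. Since $\{\lambda^{k0}\}_{\mathcal{K}}\to\lambda^*$ already, this multiplier is $\lambda^*$, so $(x^*,\lambda^*)$ is a KKT pair.

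\textbf{Case 2: $\liminf_{k\in\mathcal{K}}\|\eta^k\|>0$.} Now look at the shifted sequence. By Assumption \ref{A2} and Lemma \ref{le6}(i), we can pick an infinite $\mathcal{K}'\subseteq\mathcal{K}$ on which $x^{k-1}\to\hat x$ and $\lambda^{(k-1)0}\to\lambda^{**}$. We then need $\hat x=x^*$. The descent property (Lemma \ref{le3}(ii)) together with \eqref{linef} makes $\{F_{\bar\rho}(x^k)\}$ monotonically decreasing and bounded below, so
\[
t_{k-1}\langle{\rm grad}F_{\bar\rho}(x^{k-1}),\eta^{k-1}\rangle\to 0 .
\]
\begin{itemize}
\item If $\|\eta^{k-1}\|\to0$ along a subsequence of $\mathcal{K}'$, then $x^k=R_{x^{k-1}}(t\eta^{k-1}+t^2\tilde\eta^{k-1})$ with $\|\tilde\eta^{k-1}\|\le\|\eta^{k-1}\|$. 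Continuity of the retraction on bounded sets gives ${\rm dist}(x^{k-1},x^k)\to0$, hence $\hat x=x^*$. Lemma \ref{le7}, applied to the index set $\{k-1\}$, then gives that $(x^*,\lambda^{**})$ is a KKT pair.
\item If $\inf_{\mathcal{K}'}\|\eta^{k-1}\|>0$, Lemma \ref{le9} already gives that $x^*$ is a KKT point. It does not identify the multiplier with $\lambda^{**}$, so this sub-case needs a closer look.
\end{itemize}

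\textbf{Main obstacle.} The hard step is handling the second sub-case of Case 2 so that the dichotomy is exhaustive. My first attempt is to mimic Lemma \ref{le9}'s mechanism: $\inf\|\eta^{k-1}\|>0$ together with the step-size bound should force $t_{k-1}$ to be bounded below. Then $x^k$ and $x^{k-1}$ are linked, and the same KKT argument applied at index $k$ shows $\eta^k\to0$ along a subsequence. That would contradict the standing assumption of Case 2, so this sub-case reduces to Case 1 applied to $\mathcal{K}$, and $(x^*,\lambda^*)$ is a KKT pair. If this reduction fails, I will instead argue directly through Lemma \ref{le8}/\ref{le9} that the limit of $\lambda^{(k-1)0}$ solves the KKT system at $x^*$, using continuity of $\mathcal{A}_k$ and Lemma \ref{le5}.

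\textbf{Finally, $x^*$ is a KKT point of \eqref{Rpro}.} Let $\hat\lambda$ be the \eqref{TRpro}-multiplier at $x^*$, and suppose some $c_j(x^*)<0$ with $j\in\mathcal{E}$, i.e.\ $x^*\in\mathcal{F}\setminus\mathcal{F}_P$. The convergent $\lambda^{k0}$ with $\|\eta^{k0}\|\to0$ would then eventually satisfy the penalty-increase test (a)–(c), since $\hat\lambda_j=0<r_2$ and $\hat\lambda\ge0$. This contradicts Lemma \ref{facanshu}; the same contradiction structure is used in its proof, via Assumption \ref{A3.5}. Hence $c_i(x^*)=0$ for all $i\in\mathcal{E}$, and Proposition \ref{tp-p}(i) shows $(x^*,\hat\lambda_{\mathcal{I}},\hat\lambda_{\mathcal{E}}-\bar\rho e)$ is a KKT pair of \eqref{Rpro}.
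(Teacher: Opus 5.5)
There is a genuine gap in the sub-case of Case~2 where $\inf_{\mathcal{K}'}\|\eta^{k-1}\|>0$. You flag this as the main obstacle, but neither of your suggested routes closes it.

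The step-size lower bound in the proof of Lemma~\ref{le9} at an iteration $j$ rests on $\mu^j\geq\underline{\mu}>0$, and by \eqref{gamma} $\mu^j$ is controlled by $\|\eta^{j-1}\|$. Your hypothesis therefore bounds $\mu^k$ from below, not $\mu^{k-1}$, and says nothing about $t_{k-1}$. In fact the opposite happens. If $\langle{\rm grad}F_{\bar\rho}(x^{k-1}),\eta^{k-1}\rangle\to0$ along a subsequence, then Lemma~\ref{le3}, \eqref{dk01}, Assumption~\ref{A3}, Lemma~\ref{le5} and Lemma~\ref{le6}(ii) give $\|\eta^{k-1}\|\to0$. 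So this inner product stays bounded away from zero, and your own relation $t_{k-1}\langle{\rm grad}F_{\bar\rho}(x^{k-1}),\eta^{k-1}\rangle\to0$ forces $t_{k-1}\to0$; this is also what yields $x^{k-1}\to x^*$ here.

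Nor is there any available result that turns ``$x^*$ is a KKT point'' into ``$\eta^k\to0$''; Lemma~\ref{le7} goes the other way. Your fallback aims at the wrong multiplier. If $(x^*,\lambda^{**})$ were a KKT pair, the limit of \eqref{s11} at index $k-1$ would force $\|\eta^{(k-1)0}\|\to0$ and $\min\{\lambda^{(k-1)0},0\}\to0$. That gives $\|\eta^{k-1}\|\to0$, contradicting the sub-case hypothesis. Your final paragraph needs a subsequence along which $\|\eta^{\cdot 0}\|\to0$ and the multipliers converge, so the conclusion for \eqref{Rpro} is also unsupported in this sub-case.

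The missing idea is the limiting-operator argument from the second part of the paper's proof. (The paper splits on whether some $\mu_i^k$, $i\in\mathcal{L}(x^*)$, tends to zero, rather than on $\|\eta^{k-1}\|$.)
\begin{itemize}
\item Here $\mu_i^k\geq\min\{\inf_{\mathcal{K}'}\|\eta^{k-1}\|,\bar{\mu}\}>0$. So along a further subsequence, $\alpha_i^k\to1$ and $\beta_i^k\to0$ for $i\in\mathcal{L}(x^*)$, while $\beta_i^k\to\beta_i^*>0$ for $i\notin\mathcal{L}(x^*)$.
\item By Assumptions~\ref{A1} and \ref{A3}, the limit operator $\mathcal{A}_*$ is nonsingular.
\item Let $\bar\lambda^*$ be the multiplier at the KKT point given by Lemma~\ref{le9}. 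Both $(0_{x^*},\bar\lambda^*)$ and the limit of $(\eta^{k0},\lambda^{k0})$ solve $\mathcal{A}_*(\eta,\lambda)=(-{\rm grad}F_{\bar\rho}(x^*),0)$.
\item Hence $\lambda^*=\bar\lambda^*$ and $\|\eta^{k0}\|\to0$. This gives the first alternative of the theorem, and exactly the input your penalty-test argument for $c_i(x^*)=0$, $i\in\mathcal{E}$, requires.
\end{itemize}
Via Lemmas~\ref{le5} and \ref{le6}(ii), this also gives $\|\eta^k\|\to0$, so the sub-case is actually empty under your Case~2 hypothesis. Equivalently, you can rerun the step-size argument of Lemma~\ref{le9} at iteration $k$ instead of $k-1$. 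There $\mu^k$ is bounded below and $\liminf_{\mathcal{K}}\|\eta^k\|>0$ supplies \eqref{proof-le9-1}, which gives $t_k\geq\underline{t}$ and hence $F_{\bar\rho}(x^k)\to-\infty$, a contradiction.

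The rest of your proposal is sound: Case~1, the sub-case $\|\eta^{k-1}\|\to0$, and the penalty-test argument.
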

\begin{proof}
	It follows from Lemmas \ref{le8} and \ref{le9} that $x^*$ is a KKT point of problem \eqref{TRpro}. The rest of proof can be divided into two parts.
	
	In the first part, if there exists some index $i\in \mathcal{L}(x^*)$ and an infinite index set $\mathcal{K}'\subseteq\mathcal{K}$ such that $\{\mu^k_i\}_{\mathcal{K}'}\rightarrow 0$. Then by \eqref{gamma}, we have $\{\|\eta^{k-1}\|\}_{\mathcal{K}'}\rightarrow 0$. Similar to the proof of Lemma \ref{le7}, we know that $\{\|\eta^{(k-1)0}\|\}_{\mathcal{K}'}\rightarrow 0$. Without loss of generality, we assume that $\{x^{k-1}\}_{\mathcal{K}'}\rightarrow x^{**}$. Then the proof of  Lemma \ref{le7} and Lemma \ref{le8} show that $\lambda^{**}=\lim_{k\in \mathcal{K}'}\lambda^{(k-1)0}\geq0$ and $x^*=x^{**}$, with  $(x^*,\lambda^{**})$ being a KKT pair of problem \eqref{TRpro}. This implies that the conditions (a) and (c) in step 3 of Algorithm \ref{algo1} hold at the iteration $k-1$ for $k\in \mathcal{K}'$ large enough. Since $\rho_k\equiv\bar{\rho}<+\infty$ for all $k$, thus we must have $\lambda^{**}_\mathcal{E}\geq r_2e>0$. On the other hand, since $x^{**}$ is a KKT point of problem \eqref{TRpro}, we know that $\lambda^{**}_ic_i(x^{**})=0$ for all $i\in\mathcal{E}$. Therefore, $c_i(x^*)=c_i(x^{**})=0$ for all $i\in\mathcal{E}$. Then Proposition \ref{tp-p} shows that $x^*$ is a KKT point of problem \eqref{Rpro}.
	
	In the second part, if for any $i\in \mathcal{L}(x^*)$, there is no infinite index $\mathcal{K}'\subseteq\mathcal{K}$ such that $\{\mu^k_i\}_{\mathcal{K}'}\rightarrow 0$. Combining with the definition of $\mu^k$, without loss of generality, we assume that $\{\mu^k_i\}_{\mathcal{K}}\rightarrow \mu^*_i>0$ for all $i\in \mathcal{L}(x^*)$. On the other hand, Lemma \ref{le6} implies that $\{\eta^{k0}\}_{\mathcal{K}}$ is bounded. Then by Assumption \ref{A3} and the definitions of $\alpha_i^k$ and $\beta_i^k$, we can also assume that  $\{\eta^{k0}\}_{\mathcal{K}}\rightarrow \eta^*$, $\{\mathcal{H}_k\}_{\mathcal{K}}\rightarrow \mathcal{H}_*$, $\{\alpha_i^k\}_{\mathcal{K}}\rightarrow\alpha_i^*$, and $\{\beta_i^k\}_{\mathcal{K}}\rightarrow\beta_i^*$.
	Then for all $i\in \mathcal{L}(x^*)$, we have
	$$\{\alpha_i^k\}_{\mathcal{K}}\rightarrow\alpha_i^*=\dfrac{c_i(x^*)}{\sqrt{c_i^2(x^*)+(\mu_i^*)^2}}+1>0,$$ $$\{\beta_i^k\}_{\mathcal{K}}\rightarrow\beta_i^*=\left(1-\dfrac{\mu_i^*}{\sqrt{c_i^2(x^*)+(\mu_i^*)^2}}\right)^{1/2}=0.$$
	Let $\bar{\lambda}^*$ be a KKT multiplier corresponding to the KKT point $x^*$ of problem \eqref{TRpro}. We will show that ${\lambda}^*=\bar{\lambda}^*$. From the above analysis, it is clear that $(0_{x^*},\bar{\lambda}^*)$ is the solution to the following linear system. 
	\begin{equation}\label{th4.1pro1}
		\mathcal{A}_*(
		\eta,\lambda)
		:=
		\left(\mathcal{H}_*[\eta]+\sum_{i\in \mathcal{L}}\lambda_i{\rm grad}c_i(x^*),\Lambda_*\right)
		=\left(-{\rm grad}F_{\bar{\rho}}(x^*),0\right),
	\end{equation}
	where $\Lambda_*=\left(\alpha_i^*\langle{\rm grad}c_i(x^*),\eta\rangle-\sqrt{2}\beta_i^*\lambda_i,\ i\in \mathcal{L}\right)\in \mathbb{R}^{m+\ell}$.
	Let $(\eta',\lambda')$ be a solution to the linear system $\mathcal{A}_*(\eta,\lambda)=(0_{x^*},0)$, i.e.,
	\begin{eqnarray}\label{proof-th2-1}
		\label{proof-th2-11}
		&&\mathcal{H}_*[\eta']+\sum_{i\in \mathcal{L}}\lambda'_i{\rm grad}c_i(x^*)=0_{x^*},\\
		\label{proof-th2-12}
		&&\alpha_i^*\langle{\rm grad}c_i(x^*),\eta'\rangle-\sqrt{2}\beta_i^*\lambda'_i=0,\quad i\in \mathcal{L}.
	\end{eqnarray}
	It follows from \eqref{alha-beta} that 
	\begin{equation}\label{proof-th2-2}
		\alpha_i^*\in[0,1],\,\,\beta_i^*\in[0,\sqrt{2}],\,\,\,\, (\alpha_i^*)^2+(\beta_i^*)^4\geq 3-2\sqrt{2}.
	\end{equation}
	If $\beta^*_i=0$, then by \eqref{proof-th2-2}, we have $\alpha_i^*\neq 0$. Combining with \eqref{proof-th2-12}, we know that $\langle{\rm grad}c_i(x^*),\eta'\rangle=0$. Therefore, from \eqref{proof-th2-11} and \eqref{proof-th2-12}  we have 
	$$\left\langle \eta',\mathcal{H}_*[\eta']\right\rangle+\sum_{i\in \mathcal{L}:\beta^*_i>0}\dfrac{\alpha_i^*\arrowvert\langle{\rm grad}c_i(x^*),\eta'\rangle\arrowvert^2}{\sqrt{2}\beta_i^*}=0.$$
	Thus $\left\langle \eta',\mathcal{H}_*[\eta']\right\rangle=0$. By Assumption \ref{A3}, we have $\eta'=0_{x^*}$. Furthermore, from Assumption \ref{A1} and \eqref{proof-th2-11}, \eqref{proof-th2-12}, we obtain $\lambda'=0$. Thus, it follows that $\mathcal{A}_*$ is nonsingular, and the solution to system \eqref{th4.1pro1} is unique. Since $\{(x^k,\lambda^{k0})\}_{\mathcal{K}}\rightarrow (x^*,\lambda^*)$, by taking the limit on $\mathcal{K}$ for both sides of the system \eqref{s1}, we know that $(\eta^*,\lambda^*)$ is also the solution to \eqref{th4.1pro1}. Thus, we have $\lambda^*=\bar{\lambda}^*\geq 0$ and $\eta^*=0_{x^*}$. This implies that $(x^*,\lambda^*)$ is a KKT pair of problem \eqref{TRpro}. Similar to the first part of this proof, we can also show that $x^*$ is a KKT point of problem \eqref{Rpro}.
\end{proof}

\section{Strong and superlinear convergence}\label{Sec:superlinear}
In this section, we establish the strong and superlinear convergence of Algorithm \ref{algo1}.
Let $\{(x^k,\lambda^{k0})\}_{\mathcal{K}}\rightarrow (x^*,\lambda^*)$ for some infinite index set $\mathcal{K}\subseteq\mathbb{N}$.
Denote by $\hat{\lambda}^*$ the KKT multiplier corresponding to the KKT point $x^*$ of problem \eqref{TRpro}. From Theorem \ref{th2}, it follows that $\hat{\lambda}^*={\lambda}^*$ or $\hat{\lambda}^*={\lambda}^{**}$,
and that $x^*$ is a KKT point of problem \eqref{Rpro}. 
Let $\bar{\lambda}^*$ be the KKT multiplier corresponding to the KKT point $x^*$ of problem \eqref{Rpro}. 
Under some additional assumptions, we first show that $(x^*,\lambda^*)$ is a KKT pair of problem \eqref{TRpro}. We then show that the whole sequence $\{(x^k,\lambda^{k0})\}$ converges to $(x^*,\lambda^*)$, i.e., Algorithm \ref{algo1} is strongly convergent. Finally, we will prove that the sequence $\{x^k\}$ is superlinearly convergent.

\begin{assumption}\label{A4}
	The strict complementarity condition of problem \eqref{Rpro} holds at $(x^*,\bar{\lambda}^*)$, i.e., $\bar{\lambda}_i^*c_i(x^*)=0$ and  $\bar{\lambda}_i^*-c_i(x^*)>0$ for all $i\in \mathcal{I}$.
\end{assumption}
\begin{assumption}\label{A5}
	The SOSC of problem \eqref{Rpro} holds at $(x^*,\bar{\lambda}^*)$.
\end{assumption}
\begin{assumption}\label{A6}
	The constant $\bar{\mu}$ is chosen to be sufficiently large such that $\lambda_i^*<\bar{\mu}$ for all $i\in \mathcal{L}$.
\end{assumption}

\begin{remark}
	Note that Assumptions \ref{A4} and \ref{A5} are commonly used to establish local fast convergence (see, e.g., \cite{obara2022sequential}). Assumption \ref{A6} is mild, as Assumption \ref{A1} implies that $\lambda^*$ is unique and therefore bounded. 
\end{remark}

In Lemma~\ref{Atp} below, we verify that the strict complementarity condition and the SOSC hold for Problem~\eqref{TRpro} at $(x^*,\hat{\lambda}^*)$. 
Moreover, Lemma~\ref{le9.4} proves that $\hat{\lambda}^* = \lambda^*$. The proofs of these two lemmas can be found in Appendix~\ref{AppendixC}.
\begin{lemma}\label{Atp}
	Suppose that Assumptions \ref{A1}-\ref{A5} hold. Then we have 
	\begin{description}
		\item[(i)] the strict complementarity condition of problem \eqref{TRpro} holds at $(x^*,\hat{\lambda}^*)$, i.e., $\hat{\lambda}_i^*c_i(x^*)=0$ and $\hat{\lambda}_i^*-c_i(x^*)>0$ for all $i\in \mathcal{L}$;
		\item [(ii)] the SOSC of problem \eqref{TRpro} holds at $(x^*,\hat{\lambda}^*)$.
	\end{description}
\end{lemma}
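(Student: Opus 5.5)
The plan is to use the correspondence between multipliers from Proposition \ref{tp-p}. By Theorem \ref{th2}, $x^*$ is a KKT point of both \eqref{TRpro} (with multiplier $\hat{\lambda}^*$) and \eqref{Rpro} (with multiplier $\bar{\lambda}^*$), and $c_i(x^*)=0$ for all $i\in\mathcal{E}$.

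The first step is to identify the two multipliers. The KKT condition for \eqref{TRpro} reads
\[
{\rm grad}f(x^*)+\sum_{i\in\mathcal{I}}\hat{\lambda}_i^*{\rm grad}c_i(x^*)+\sum_{i\in\mathcal{E}}(\hat{\lambda}_i^*-\bar{\rho}){\rm grad}c_i(x^*)=0_{x^*}.
\]
By Proposition \ref{tp-p}(i), $(x^*,\hat{\lambda}^*_{\mathcal{I}},\hat{\lambda}^*_{\mathcal{E}}-\bar{\rho}e)$ is a KKT pair of \eqref{Rpro}. Since $c_i(x^*)=0$ for $i\in\mathcal{E}$, we have $\mathcal{L}(x^*)=\mathcal{I}(x^*)\cup\mathcal{E}$. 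Assumption \ref{A1} then gives linear independence of $\{{\rm grad}c_i(x^*)\mid i\in\mathcal{I}(x^*)\cup\mathcal{E}\}$, so the multiplier of \eqref{Rpro} is unique. (Inactive $i\in\mathcal{I}$ carry zero multipliers by complementarity in both problems.) Hence
\[
\bar{\lambda}^*_{\mathcal{I}}=\hat{\lambda}^*_{\mathcal{I}},\qquad \bar{\lambda}^*_{\mathcal{E}}=\hat{\lambda}^*_{\mathcal{E}}-\bar{\rho}e.
\]

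For part (i), the indices $i\in\mathcal{I}$ follow immediately from Assumption \ref{A4} through this identification. For $i\in\mathcal{E}$ we have $c_i(x^*)=0$, so $\hat{\lambda}_i^*c_i(x^*)=0$ trivially, and it remains to show $\hat{\lambda}_i^*>0$. This is the one genuinely non-formal point, and I expect it to be the main obstacle. The plan is to reuse the argument from the proof of Theorem \ref{th2}. In the first case there, the multiplier $\lambda^{**}$ arises as a limit of $\lambda^{(k-1)0}$ at iterations where conditions (a) and (c) of step 3 hold eventually. Since $\rho_k$ is never increased after $\bar{k}$, condition (b) must fail, which forces $\lambda^{**}_{\mathcal{E}}\geq r_2e>0$. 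In the second case, the same reasoning applied to $\lambda^{k0}$ gives $\lambda^*_{\mathcal{E}}\geq r_2 e$. Because $\hat{\lambda}^*$ equals $\lambda^*$ or $\lambda^{**}$, and the multiplier of \eqref{TRpro} at $x^*$ is unique by Assumption \ref{A1}, we conclude $\hat{\lambda}^*_{\mathcal{E}}\geq r_2e>0$. The care needed is to state this cleanly: in either case, a subsequence of the $\lambda^{\cdot 0}$ converges to $\hat{\lambda}^*$ with $\|\eta^{\cdot0}\|\to0$ and limit $\geq0$, so step 3 would trigger unless $\lambda_{\mathcal{E}}\geq r_2 e$.

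For part (ii), we have $L_{\bar\rho}(x,\hat\lambda^*)=f(x)+\sum_{\mathcal{I}}\hat\lambda_i^*c_i(x)+\sum_{\mathcal{E}}(\hat\lambda_i^*-\bar\rho)c_i(x)=L(x,\bar\lambda^*)$ identically in $x$, so the two Riemannian Hessians coincide. It remains to compare critical cones. For \eqref{TRpro}, strict complementarity from part (i) makes every active index $i\in\mathcal{L}(x^*)=\mathcal{I}(x^*)\cup\mathcal{E}$ satisfy $\hat{\lambda}^*_i>0$. The critical cone is therefore $\{\xi:\langle\xi,{\rm grad}c_i(x^*)\rangle=0,\ i\in\mathcal{I}(x^*)\cup\mathcal{E}\}$. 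By Assumption \ref{A4}, this equals $\mathcal{C}(x^*,\bar{\lambda}^*)$ for \eqref{Rpro}, because there are no active inequality indices with zero multiplier. Assumption \ref{A5} then yields the SOSC for \eqref{TRpro}.
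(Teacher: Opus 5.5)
Your proposal is correct and follows essentially the same route as the paper. You identify $\bar{\lambda}^*$ with $(\hat{\lambda}^*_{\mathcal{I}},\hat{\lambda}^*_{\mathcal{E}}-\bar{\rho}e)$ via Proposition~\ref{tp-p} and LICQ uniqueness, obtain strict complementarity on $\mathcal{I}$ from Assumption~\ref{A4}, and obtain $\hat{\lambda}^*_{\mathcal{E}}\geq r_2e>0$ from the failure of the step-3 test once $\rho_k$ is frozen (in both cases of Theorem~\ref{th2}); part (ii) then follows from $L_{\bar{\rho}}(\cdot,\hat{\lambda}^*)=L(\cdot,\bar{\lambda}^*)$ together with your explicit check that the two critical cones coincide, a step the paper leaves implicit.
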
	

\begin{lemma}\label{le9.4}
	Suppose that Assumptions \ref{A1}-\ref{A4} hold. Then $(x^*,\lambda^*)$ is a KKT pair of problem \eqref{TRpro}, i.e., $\hat{\lambda}^*={\lambda}^*$.
\end{lemma}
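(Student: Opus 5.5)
The plan is to reduce the statement to the dichotomy already in the proof of Theorem~\ref{th2}, and to use strict complementarity to rule out the branch where the limit of $\lambda^{k0}$ could differ from the true multiplier. First, by Theorem~\ref{th2}, $x^*$ is a KKT point of \eqref{TRpro}. By Assumption~\ref{A1} (LICQ at $x^*\in\mathcal{F}$), the KKT multiplier of \eqref{TRpro} at $x^*$ is unique, so it equals $\hat{\lambda}^*$. It therefore suffices to show $\lambda^*=\lim_{k\in\mathcal{K}}\lambda^{k0}$ coincides with this multiplier.

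The proof of Theorem~\ref{th2} splits into two cases according to whether $\{\mu_i^k\}_{\mathcal{K}'}\to 0$ for some $i\in\mathcal{L}(x^*)$ and some infinite $\mathcal{K}'\subseteq\mathcal{K}$. In the second case, the $\mu_i^k$ for active $i$ stay bounded away from zero along $\mathcal{K}$. After passing to a further subsequence (which does not change the limit $\lambda^*$), the argument there shows that the limiting operator $\mathcal{A}_*$ is nonsingular. Both $(0_{x^*},\hat{\lambda}^*)$ and $(\eta^*,\lambda^*)$ solve \eqref{th4.1pro1}, hence $\lambda^*=\hat{\lambda}^*$. So the whole task is to exclude the first case.

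Suppose, for contradiction, that there are $i\in\mathcal{L}(x^*)$ and an infinite $\mathcal{K}'\subseteq\mathcal{K}$ with $\{\mu_i^k\}_{\mathcal{K}'}\to0$. Since $\bar{\mu}>0$, the update \eqref{gamma}, $\mu_i^k=\min\{\max\{\lambda_i^{(k-1)0},\|\eta^{k-1}\|\},\bar\mu\}$, forces $\max\{\lambda_i^{(k-1)0},\|\eta^{k-1}\|\}\to0$ along $\mathcal{K}'$. In particular $\limsup_{k\in\mathcal{K}'}\lambda_i^{(k-1)0}\le 0$. The first part of the proof of Theorem~\ref{th2} shows that, along a further subsequence, $\lambda^{(k-1)0}\to\lambda^{**}$ with $(x^*,\lambda^{**})$ a KKT pair of \eqref{TRpro}. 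By uniqueness of the multiplier, $\lambda^{**}=\hat{\lambda}^*$, so $\hat{\lambda}_i^*\le 0$.

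On the other hand, Lemma~\ref{Atp}(i) gives strict complementarity for \eqref{TRpro} at $(x^*,\hat{\lambda}^*)$. Since $c_i(x^*)=0$, this yields $\hat{\lambda}_i^*>0$, a contradiction. I only need part (i) of Lemma~\ref{Atp}, which should rely only on Assumption~\ref{A4} (transferred via Proposition~\ref{tp-p}, with $\hat{\lambda}^*_\mathcal{E}=\bar{\lambda}^*_\mathcal{E}+\bar\rho e$). If that dependence is unclear, I would reprove it directly. For $i\in\mathcal{I}$ it is exactly Assumption~\ref{A4}. For $i\in\mathcal{E}$, the first part of the proof of Theorem~\ref{th2} already gives $\lambda^{**}_\mathcal{E}\ge r_2 e>0$, which is enough.

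Hence the first case cannot occur, and the second case yields $\lambda^*=\hat{\lambda}^*$. The main obstacle I expect is bookkeeping in the first case: one must make sure the subsequence limit $\lambda^{**}$ is really the unique multiplier $\hat{\lambda}^*$. This uses $x^{k-1}\to x^*$ along $\mathcal{K}'$, which follows from $\|\eta^{k-1}\|\to0$ and the retraction update, as in Lemma~\ref{le8}.
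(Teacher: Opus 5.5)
Your proposal is correct and follows essentially the same route as the paper. You use the dichotomy in the proof of Theorem~\ref{th2} to reduce the claim to excluding $\{\mu_i^k\}_{\mathcal{K}'}\to 0$ for some $i\in\mathcal{L}(x^*)$, then contradict the strict complementarity of Lemma~\ref{Atp}(i), which indeed needs only Assumption~\ref{A4} and not Assumption~\ref{A5}. Your version is also slightly more careful than the paper's one-line argument: it uses $\limsup_{k\in\mathcal{K}'}\lambda_i^{(k-1)0}\le 0$ rather than asserting convergence to $0$, and it explicitly identifies $\lambda^{**}$ with $\hat{\lambda}^*$ via the uniqueness of the multiplier under LICQ.
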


In the rest of this paper, for simplicity, we use the notation ${\lambda}^*$ instead of $\hat{\lambda}^*$. To analyze the strong and superlinear convergence of the sequence $\{x^k\}$, we need to locally transform problem \eqref{TRpro} into a Euclidean space. For this, we introduce the concept of Riemannian normal coordinate system. 
Let $E:\mathbb{R}^n\rightarrow T_{x^*}\mathcal{M}$ be a linear bijection such that $\{E(e_j)\}_{j=1}^n$ forms an orthogonal basis of $T_{x^*}\mathcal{M}$, where $e_j$ denotes  the $j$-th unit vector in $\mathbb{R}^n$. 
We know that there exist a neighborhood $\mathcal{U}$ of $x^*$ and a neighborhood $V$ of $0_{x^*}$ such that the exponential mapping ${\rm Exp}_{x^*}:V\rightarrow \mathcal{U}$ is a diffeomorphism (see \cite[Sec. 5.4]{absil2008optimization}). Let $\varphi=E^{-1}\circ{\rm Exp}_{x^*}^{-1}$. It is clear that $(\mathcal{U},\varphi)$ is a chart,  which is known as a Riemannian normal coordinate system. Letting $x\in\mathcal{U}$ and using this chart, one can define: $\hat{x}=\varphi(x)$, $\hat{\xi}_{\hat{x}}={\rm D}\varphi(x)[\xi_x]$, $\hat{f}=f\circ\varphi^{-1}$, $\hat{c}_i=c_i\circ\varphi^{-1}$ for all $i\in \mathcal{L}$. It follows from \cite[Sec. 3.6]{absil2008optimization} that 
\begin{equation}\label{inner}
	\langle\xi_x,\zeta_x\rangle=\hat{\xi}_{\hat{x}}^{\top}G_{\hat{x}}\hat{\zeta}_{\hat{x}}
\end{equation}
for all $\xi_x,\zeta_x\in T_x\mathcal{M}$, where $G_{\hat{x}}$ 
is the matrix representation of the Riemannian metric at $x$, which is symmetric positive definite. Note that $G_{\hat{x}^*}=I_n$, where $I_n$ denotes the identity matrix of order $n$ (see, e.g., \cite{klingenberg1982riemannian,yang2014optimality}).

Now, we locally transform problem \eqref{TRpro} into the Euclidean space $\mathbb{R}^n$
as follows:
\begin{equation}\label{Epro}
	\begin{aligned}
		&\min\hat{F}_{\bar{\rho}}(\hat{x})\\
		&{\rm \,\,s.t.}\,\,\, \hat{c}_i(\hat{x})\leq 0,\quad i\in \mathcal{L},\\
		&\quad\quad\,\hat{x}\in\varphi(\mathcal{U})\subseteq\mathbb{R}^n.
	\end{aligned}\tag{{\rm EP}}
\end{equation}
Clearly, for a point $\hat{x}\in\varphi({\mathcal{U}})$, the KKT conditions of problem \eqref{Epro} at $\hat{x}$ are as follows:
\begin{eqnarray*}
	&&\nabla \hat{F}_{\bar{\rho}}(\hat{x})+\sum_{i\in \mathcal{L}}\lambda_i\nabla \hat{c}_i(\hat{x})=0,\\
	&&\lambda_i\geq 0,\,\,\hat{c}_i(\hat{x})\leq 0\quad \forall\, i\in \mathcal{L},\\
	&&\lambda_i\hat{c}_i(\hat{x})=0,\quad \forall\, i\in \mathcal{L}.
\end{eqnarray*}

Note that $\widehat{{\rm grad}{F}_{\bar{\rho}}}(\hat{x})=G_{\hat{x}}^{-1}\nabla\hat{F}_{\bar{\rho}}(\hat{x})$ (see \cite[Sec. 3.6]{absil2008optimization}). 
Then the following lemma is obvious, so we omit its proof.
\begin{lemma}\label{le9.5}
	Let $x'\in\mathcal{U}$. Then $(x',\lambda')$ is a KKT pair of problem \eqref{TRpro} if and only if $(\hat{x}',\lambda')$ is a KKT pair of problem \eqref{Epro}. In particular, $(\hat{x}^*,\lambda^*)$ is a KKT pair of \eqref{Epro}.
\end{lemma}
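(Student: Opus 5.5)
The plan is to transfer each KKT condition through the chart $\varphi$. Two facts about a smooth function do the work. First, the function values agree: $\hat{c}_i(\hat{x})=c_i(x)$ and $\hat{F}_{\bar{\rho}}(\hat{x})=F_{\bar{\rho}}(x)$, because $\hat{c}_i=c_i\circ\varphi^{-1}$. Second, the Riemannian gradient in coordinates is $\widehat{{\rm grad}\,c_i}(\hat{x})=G_{\hat{x}}^{-1}\nabla\hat{c}_i(\hat{x})$, and likewise for $F_{\bar{\rho}}$. This formula comes from the definition of the gradient together with \eqref{inner}. For $\xi_x\in T_x\mathcal{M}$ we have $\langle {\rm grad}\,c_i(x),\xi_x\rangle={\rm D}c_i(x)[\xi_x]=\nabla\hat{c}_i(\hat{x})^{\top}\hat{\xi}_{\hat{x}}$. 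Since $\hat{\xi}_{\hat{x}}$ ranges over all of $\mathbb{R}^n$, this identifies the representation of the gradient.

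Next I would check the conditions one at a time for $x'\in\mathcal{U}$. The sign conditions $\lambda'_i\geq 0$ and $c_i(x')\leq 0$ carry over directly, because the function values agree. The same holds for the complementarity conditions $\lambda'_ic_i(x')=0$. The stationarity condition reads ${\rm grad}F_{\bar{\rho}}(x')+\sum_{i\in\mathcal{L}}\lambda'_i{\rm grad}c_i(x')=0_{x'}$. I would apply the linear isomorphism ${\rm D}\varphi(x')$, which is a bijection because $\varphi$ is a chart, so a tangent vector is zero if and only if its coordinate representation is zero. This turns the condition into $G_{\hat{x}'}^{-1}\bigl(\nabla\hat{F}_{\bar{\rho}}(\hat{x}')+\sum_{i}\lambda'_i\nabla\hat{c}_i(\hat{x}')\bigr)=0$. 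Since $G_{\hat{x}'}$ is symmetric positive definite, and in particular invertible, this is equivalent to the Euclidean stationarity condition of \eqref{Epro}. That gives both directions of the equivalence.

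For the final claim, I would use Lemma \ref{le9.4}, which says that $(x^*,\lambda^*)$ is a KKT pair of \eqref{TRpro}. Since $x^*\in\mathcal{U}$, the equivalence just proved applies and gives the result, with $\hat{x}^*=\varphi(x^*)=0$.

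One point needs care, though it is not a real obstacle. The constraint $\hat{x}\in\varphi(\mathcal{U})$ in \eqref{Epro} is an open-set constraint. It therefore contributes no multiplier, and the KKT system of \eqref{Epro} is exactly the one displayed in the paper. Apart from this, every step is a direct identification.
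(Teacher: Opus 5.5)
Your proposal is correct and is the argument the paper has in mind: the paper omits the proof as obvious once one notes $\widehat{{\rm grad}F_{\bar{\rho}}}(\hat{x})=G_{\hat{x}}^{-1}\nabla\hat{F}_{\bar{\rho}}(\hat{x})$ (cited from Absil et al.). You derive that coordinate-gradient formula from \eqref{inner} instead of citing it, then transfer feasibility, sign, complementarity and stationarity through the chart, and you obtain the ``in particular'' claim from Lemma~\ref{le9.4} and $x^*\in\mathcal{U}$, which matches the paper's setup.
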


Denote $\hat{L}_{\bar{\rho}}(\hat{x},\lambda)=\hat{F}_{\bar{\rho}}(\hat{x})+\sum_{i\in \mathcal{L}}\lambda_i\hat{c}_i(\hat{x})$. The following lemma characterizes the relationship between the Riemannian Hessian ${\rm Hess}_xL_{\bar{\rho}}(x^*,\lambda^*)$ and the Euclidean Hessian matrix $\nabla^2_{\hat{x}}\hat{L}_{\bar{\rho}}(\hat{x}^*,\lambda^*)$.
\begin{lemma}\label{le10}
	Under the Riemannian normal coordinate system $(\mathcal{U},\varphi)$, the following relation holds:
	$${\rm D}\varphi(x^*){\rm Hess}_xL_{\bar{\rho}}(x^*,\lambda^*){\rm D}\varphi^{-1}(\hat{x}^*)=\nabla^2_{\hat{x}}\hat{L}_{\bar{\rho}}(\hat{x}^*,\lambda^*).$$
\end{lemma}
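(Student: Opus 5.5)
The idea is to compare the Riemannian Hessian with the Euclidean Hessian of the pulled-back function at the center of normal coordinates. There the metric is the identity and the Christoffel symbols vanish, so first-order correction terms drop out. Throughout write $g:=L_{\bar{\rho}}(\cdot,\lambda^*)$, a smooth function on $\mathcal{U}$, and $\hat g=g\circ\varphi^{-1}$, so that $\hat g(\hat{x})=\hat{L}_{\bar{\rho}}(\hat{x},\lambda^*)$. The claim is a statement about the single function $g$ at $x^*$.

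\textbf{Step 1: coordinate formula for the Hessian.} In any chart, the Riemannian Hessian has the coordinate expression
\begin{equation*}
\widehat{{\rm Hess}\,g(x)}[\hat\xi]^k=\sum_{j}\Big(\partial_j\big(G^{-1}_{\hat x}\nabla\hat g(\hat x)\big)^k+\sum_{l}\Gamma^k_{jl}(\hat x)\big(G^{-1}_{\hat x}\nabla\hat g(\hat x)\big)^l\Big)\hat\xi^j .
\end{equation*}
This follows from $\widehat{{\rm grad}\,g}=G^{-1}\nabla\hat g$ (already quoted in the paper) and the definition ${\rm Hess}\,g[\xi]=\bar\nabla_\xi{\rm grad}\,g$, via the local formula for the Levi-Civita connection. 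Here $\hat\xi={\rm D}\varphi(x)[\xi]$, which is precisely the meaning of conjugating by ${\rm D}\varphi(x^*)$ and ${\rm D}\varphi^{-1}(\hat x^*)$.

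\textbf{Step 2: properties of normal coordinates.} At $\hat x^*=0$ we have $G_{\hat x^*}=I_n$, as stated in the paper. In addition, $\partial_j G_{\hat x^*}=0$ and $\Gamma^k_{jl}(\hat x^*)=0$. To justify this, note that radial lines $t\mapsto t\hat v$ are images of geodesics, so the geodesic equation gives $\sum_{j,l}\Gamma^k_{jl}(t\hat v)\hat v^j\hat v^l=0$. At $t=0$ this holds for all $\hat v$, and symmetry of $\Gamma$ in $j,l$ then forces $\Gamma(0)=0$. Since $\partial G$ is a linear combination of Christoffel symbols (metric compatibility), $\partial G(0)=0$ as well. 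I would cite \cite{absil2008optimization,klingenberg1982riemannian} for these standard facts rather than rederive them.

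\textbf{Step 3: evaluation at $\hat x^*$.} By the product rule,
\[
\partial_j(G^{-1}\nabla\hat g)=-G^{-1}(\partial_jG)G^{-1}\nabla\hat g+G^{-1}\partial_j\nabla\hat g .
\]
At $\hat x^*$ the first term vanishes and $G^{-1}=I_n$, and the Christoffel term also vanishes. Hence $\widehat{{\rm Hess}\,g(x^*)}[\hat\xi]=\nabla^2\hat g(\hat x^*)\hat\xi$, which is exactly the claimed identity.

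The step most likely to need care is Step 2, namely justifying that $\partial G$ and $\Gamma$ vanish at the origin, and checking that the chart $\varphi=E^{-1}\circ{\rm Exp}_{x^*}^{-1}$ with orthonormal $E$ really is normal in this sense. An alternative route avoids Christoffel symbols entirely. It uses the fact that for the exponential map (a second-order retraction) one has ${\rm Hess}\,g(x^*)={\rm Hess}(g\circ{\rm Exp}_{x^*})(0_{x^*})$ \cite[Prop.~5.5.5]{absil2008optimization}. Composing with the linear isometry $E$ then gives $\nabla^2\hat g(0)$ directly. I would try this route first, since it reduces the statement to a quoted proposition plus the chain rule for a linear map.
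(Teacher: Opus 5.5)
Your proposal is correct, and the route you say you would try first is exactly the paper's proof. The paper quotes the Absil et al.\ result ${\rm Hess}\,g(x^*)={\rm Hess}(g\circ{\rm Exp}_{x^*})(0_{x^*})$. It then writes the Euclidean Hessian of $g\circ{\rm Exp}_{x^*}$ in the basis $\{E_j\}$ as $z\mapsto\sum_{j,l}\partial^2_{jl}\hat g(0)z^lE_j$, and uses ${\rm D}\varphi(x^*)=E^{-1}$, which holds because ${\rm D}{\rm Exp}_{x^*}^{-1}(x^*)={\rm id}$. That is your chain rule through the isometry $E$, written out in coordinates. Your insistence that $E$ be an isometry is the correct reading of the paper's ``orthogonal basis''.

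Your Steps 1--3 are a genuinely different, chart-level argument, and they are also sound. The coordinate formula for $\bar\nabla_\xi{\rm grad}\,g$ is right, and so are $\Gamma(\hat x^*)=0$ from radial geodesics plus torsion-freeness and $\partial G(\hat x^*)=0$ from metric compatibility. This route avoids the quoted proposition on second-order retractions. It also shows why the identity holds for any smooth function at the center of normal coordinates. The paper's route is shorter because that bookkeeping is hidden in the citation.

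In this particular setting your Step~2 can be skipped entirely. Since $(x^*,\lambda^*)$ is a KKT pair of \eqref{TRpro} (Lemma~\ref{le9.4}), ${\rm grad}\,g(x^*)=0_{x^*}$, and hence $\nabla\hat g(\hat x^*)=0$. Then both the $\partial_jG$ term and the Christoffel term in Step~3 vanish in any chart, and only $G_{\hat x^*}=I_n$ is needed. This is the coordinate counterpart of the fact that at a critical point ${\rm Hess}\,g(x^*)={\rm Hess}(g\circ R_{x^*})(0_{x^*})$ for every retraction $R$.
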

\begin{proof}
	Denote $L_{\bar{\rho},\lambda}(x)=L_{\bar{\rho}}(x,\lambda)$ and $\hat{L}_{\bar{\rho},\lambda}(\hat{x})=\hat{L}_{\bar{\rho}}(\hat{x},\lambda)$. Then we have ${\rm Hess}_xL_{\bar{\rho}}(x^*,\lambda^*)={\rm Hess}L_{\bar{\rho},\lambda^*}(x^*)$ and $\nabla^2_{\hat{x}}\hat{L}_{\bar{\rho}}(\hat{x}^*,\lambda^*)=\nabla^2\hat{L}_{\bar{\rho},\lambda^*}(\hat{x}^*)$.
	
	It follows from \cite[Prop. 5.5.4]{absil2008optimization} that $${\rm Hess}L_{\bar{\rho},\lambda^*}(x^*)={\rm Hess}(L_{\bar{\rho},\lambda^*}\circ{\rm Exp}_{x^*})(0_{x^*}),$$ where $$L_{\bar{\rho},\lambda^*}\circ{\rm Exp}_{x^*}=F_{\bar{\rho}}\circ{\rm Exp}_{x^*}+\sum_{i\in \mathcal{L}}\lambda^*_ic_i\circ{\rm Exp}_{x^*}:T_{x^*}\mathcal{M}\rightarrow\mathbb{R},$$ 
	and ${\rm Hess}(L_{\bar{\rho},\lambda^*}\circ{\rm Exp}_{x^*})(0_{x^*})$ denotes the Euclidean Hessian of $L_{\bar{\rho},\lambda^*}\circ{\rm Exp}_{x^*}$ at the origin of $T_{x^*}\mathcal{M}$. Therefore, we only need to show that 
	$${\rm D}\varphi(x^*){\rm Hess}(L_{\bar{\rho},\lambda^*}\circ{\rm Exp}_{x^*})(0_{x^*}){\rm D}\varphi^{-1}(\hat{x}^*)=\nabla^2\hat{L}_{\bar{\rho},\lambda^*}(\hat{x}^*).$$
	Denote $E_j=E(e_j)$, then for any $y=\sum_{j=1}^{n}y^jE_j\in T_{x^*}\mathcal{M}$, we have $$L_{\bar{\rho},\lambda^*}\circ{\rm Exp}_{x^*}(y)=L_{\bar{\rho},\lambda^*}\circ{\rm Exp}_{x^*}(y^1E_1+\cdots,+y^nE_n)=L_{\bar{\rho},\lambda^*}\circ\varphi^{-1}(y^1,\cdots,y^n).$$
	Thus 
	$${\rm Hess}(L_{\bar{\rho},\lambda^*}\circ{\rm Exp}_{x^*})(0_{x^*})[z]=\sum_{j,l}\partial_{j,l}^2(L_{\bar{\rho},\lambda^*}\circ\varphi^{-1})(0,\cdots,0)z^lE_j,$$
	where $z=\sum_{j=1}^{n}z^jE_j$ and $\partial_{j,l}^2(L_{\bar{\rho},\lambda^*}\circ\varphi^{-1})=\dfrac{\partial}{\partial_j\partial_l}(L_{\bar{\rho},\lambda^*}\circ\varphi^{-1})$ (see \cite[Sec. 5.5]{absil2008optimization}). 
	
	On the other hand, let $y=\sum_{j=1}^{n}y^jE_j\in T_{x^*}\mathcal{M}$ be arbitrary. Then
	\begin{equation*}
		\begin{aligned}
			{\rm D}\varphi(x^*)[y]&={\rm D}(E^{-1}\circ{\rm Exp}_{x^*}^{-1})(x^*)\left[y^1E_1+\cdots+y^nE_n\right]\\
			&={\rm D}E^{-1}({\rm Exp}_{x^*}^{-1}(x^*))\left[{\rm D}{\rm Exp}_{x^*}^{-1}(x^*)\left[y^1E_1+\cdots+y^nE_n\right]\right]\\
			&={\rm D}E^{-1}(0_{x^*})\left[y^1E_1+\cdots+y^nE_n\right]\\
			&=\sum_{j=1}^{n}y^je_j.
		\end{aligned}
	\end{equation*}
	Therefore, for any vector $v=\sum_{j=1}^nv^je_j\in\mathbb{R}^n$, we have
	\begin{equation*}
		\begin{aligned}
			&\quad\,\,{\rm D}\varphi(x^*){\rm Hess}(L_{\bar{\rho},\lambda^*}\circ{\rm Exp}_{x^*})(0_{x^*}){\rm D}\varphi^{-1}(\hat{x}^*)[v]\\
			&={\rm D}\varphi(x^*){\rm Hess}(L_{\bar{\rho},\lambda^*}\circ{\rm Exp}_{x^*})(0_{x^*})\left[v^1E_1+\cdots+v^nE_n\right]\\
			&={\rm D}\varphi(x^*)\sum_{j,l}\partial_{j,l}^2(L_{\bar{\rho},\lambda^*}\circ\varphi^{-1})(0,\cdots,0)v^lE_j\\
			&=\sum_{j,l}\partial_{j,l}^2(L_{\bar{\rho},\lambda^*}\circ\varphi^{-1})(0,\cdots,0)v^le_j\\
			&=\nabla^2\hat{L}_{\bar{\rho},\lambda^*}(\hat{x}^*)v,
		\end{aligned}
	\end{equation*}
	which completes the proof.
\end{proof}

Through the above analysis, the following lemma establishes that the relevant conditions hold for problem \eqref{Epro}.
\begin{lemma}\label{pro3}
	Suppose that Assumptions \ref{A1}-\ref{A5} hold. Let $(\mathcal{U},\varphi)$ be a Riemannian normal coordinate system. Denote $\hat{\mathcal{L}}(\hat{x}):=\{i\in \mathcal{L}\mid \hat{c}_i(\hat{x})=0\}$, then we have 
	\begin{description}
		\item[(i)] for all $\hat{x}\in\varphi(\mathcal{U}\cap\mathcal{F})$, the LICQ of problem \eqref{Epro}) holds at $\hat{x}$, i.e., the set $\{\nabla\hat{c}_i(\hat{x})\mid i\in \hat{\mathcal{L}}(\hat{x})\}$
		is linearly independent;
		\item [(ii)] the strict complementarity condition of problem \eqref{Epro} holds at $(\hat{x}^*,\lambda^*)$, i.e.,  $\lambda_i^*\hat{c}_i(\hat{x}^*)=0$ and $\lambda_i^*-\hat{c}_i(\hat{x}^*)>0$ for all $i\in \mathcal{L}$;
		\item [(iii)] the SOSC of problem \eqref{Epro} holds at $(\hat{x}^*,\lambda^*)$.
	\end{description}
\end{lemma}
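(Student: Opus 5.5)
The plan is to transport each condition from \eqref{TRpro} to \eqref{Epro} through the chart $\varphi$. The key tools are the identity $\widehat{{\rm grad}\,g}(\hat{x})=G_{\hat{x}}^{-1}\nabla\hat{g}(\hat{x})$ for any smooth $g$, the isomorphism ${\rm D}\varphi(x)$ of the tangent spaces, and Lemma \ref{le10} for the Hessian. In each case I first establish the property on the manifold side for \eqref{TRpro}, then read it off in coordinates.

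For (i), take $\hat{x}\in\varphi(\mathcal{U}\cap\mathcal{F})$ and set $x=\varphi^{-1}(\hat{x})\in\mathcal{F}$. Since $\hat{c}_i(\hat{x})=c_i(x)$, we have $\hat{\mathcal{L}}(\hat{x})=\mathcal{L}(x)$. Suppose $\sum_{i\in\mathcal{L}(x)}a_i\nabla\hat{c}_i(\hat{x})=0$. Multiplying by $G_{\hat{x}}^{-1}$ gives $\sum a_i\widehat{{\rm grad}c_i}(\hat{x})=0$. Because ${\rm D}\varphi(x)$ is a linear bijection, this yields $\sum a_i{\rm grad}c_i(x)=0_x$, and Assumption \ref{A1} then forces $a=0$.

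For (ii), Lemma \ref{le9.4} gives $\hat{\lambda}^*=\lambda^*$, and Lemma \ref{Atp}(i) gives strict complementarity for \eqref{TRpro} at $(x^*,\lambda^*)$. Since $\hat{c}_i(\hat{x}^*)=c_i(x^*)$, this transfers verbatim to \eqref{Epro}. Lemma \ref{le9.5} confirms that $(\hat{x}^*,\lambda^*)$ is indeed a KKT pair of \eqref{Epro}.

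For (iii), I use $G_{\hat{x}^*}=I_n$, so that $\nabla\hat{c}_i(\hat{x}^*)={\rm D}\varphi(x^*)[{\rm grad}c_i(x^*)]$. In the proof of Lemma \ref{le10} it is shown that ${\rm D}\varphi(x^*)$ maps the orthonormal basis $E_j$ to $e_j$, so it is an isometry. Hence $\langle\xi,{\rm grad}c_i(x^*)\rangle=\hat{\xi}^{\top}\nabla\hat{c}_i(\hat{x}^*)$ with $\hat{\xi}={\rm D}\varphi(x^*)[\xi]$. It follows that the critical cone of \eqref{Epro} at $(\hat{x}^*,\lambda^*)$ is exactly ${\rm D}\varphi(x^*)\,\mathcal{C}_{\bar{\rho}}(x^*,\lambda^*)$, where $\mathcal{C}_{\bar{\rho}}$ is the critical cone of \eqref{TRpro}; by strict complementarity this cone consists of the equalities on the active set. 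Lemma \ref{le10} gives $\nabla^2\hat{L}_{\bar{\rho}}(\hat{x}^*,\lambda^*)={\rm D}\varphi(x^*){\rm Hess}_xL_{\bar{\rho}}(x^*,\lambda^*){\rm D}\varphi(x^*)^{-1}$. Combined with the isometry property, this yields $\hat{\xi}^{\top}\nabla^2\hat{L}_{\bar{\rho}}\hat{\xi}=\langle{\rm Hess}_xL_{\bar{\rho}}(x^*,\lambda^*)[\xi],\xi\rangle$. This is positive for nonzero $\xi$ in the cone by Lemma \ref{Atp}(ii), which gives the SOSC for \eqref{Epro}.

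The main obstacle is making sure that gradients transform covariantly and Hessians as bilinear forms. This works only because we evaluate at $x^*$, where $G_{\hat{x}^*}=I_n$ and ${\rm D}\varphi(x^*)$ is an isometry. Part (i), by contrast, is needed at general $\hat{x}$, which is why the argument there goes through $G_{\hat{x}}^{-1}$ rather than the isometry.
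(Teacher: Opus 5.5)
Your proposal is correct and follows essentially the same route as the paper. Parts (ii) and (iii) match its argument: strict complementarity is transferred via $\hat{c}_i(\hat{x}^*)=c_i(x^*)$ together with Lemma~\ref{Atp}(i), and the SOSC is obtained by mapping the critical subspace through ${\rm D}\varphi^{-1}(\hat{x}^*)$ and using $G_{\hat{x}^*}=I_n$ with Lemma~\ref{le10}. The only difference is in (i), where you give the direct argument via $\widehat{{\rm grad}c_i}(\hat{x})=G_{\hat{x}}^{-1}\nabla\hat{c}_i(\hat{x})$ and Assumption~\ref{A1}, while the paper simply cites \cite{yang2014optimality}.
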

\begin{proof}
	Part (i) can be found in \cite[Sec. 4]{yang2014optimality}. In view of the fact that $\hat{c}_i(\hat{x}^*)=c_i(x^*)$, we obtain part (ii). Next, we prove part (iii) as follows. For any nonzero vector $u\in \{v\mid v^{\top}\nabla\hat{c}_i(\hat{x}^*)=0,\,i\in \hat{\mathcal{L}}(\hat{x}^*)\}$, we have $\langle\nabla\hat{c}_i(\hat{x}^*),u\rangle=0$ for all $i\in \hat{\mathcal{L}}(\hat{x}^*)$. Note that $u^{\top}\nabla\hat{c}_i(\hat{x}^*)=\left\langle{\rm grad}c_i(x^*),{\rm D}\varphi^{-1}(\hat{x}^*)[u]\right\rangle$ (see \cite[Sec. 3.6]{absil2008optimization}) for all $i\in \hat{\mathcal{L}}(\hat{x}^*)=\mathcal{L}({x}^*)$. Thus 
	$${\rm D}\varphi^{-1}(\hat{x}^*)[u]\in \{\xi_{x^*}\in T_{x^*}\mathcal{M}\mid\langle\xi_{x^*},{\rm grad}c_i(x^*) \rangle=0,\, i\in \mathcal{L}(x^*)\}.$$ 
	Then by Lemma \ref{Atp} (ii), we have 
	\begin{equation*}
		\begin{aligned}
			0&<\left\langle{\rm D}\varphi^{-1}(\hat{x}^*)[u],{\rm Hess}_xL_{\bar{\rho}}(x^*,\lambda^*){\rm D}\varphi^{-1}(\hat{x}^*)[u]\right\rangle\\
			&=u^{\top}{\rm D}\varphi(x^*){\rm Hess}_xL_{\bar{\rho}}(x^*,\lambda^*){\rm D}\varphi^{-1}(\hat{x}^*)[u]\\
			&= u^{\top}\nabla^2_{\hat{x}}\hat{L}_{\bar{\rho}}(\hat{x}^*,\lambda^*)u,
		\end{aligned}
	\end{equation*}
	where the first equality holds due to \eqref{inner} and $G_{\hat{x}^*}=I_n$, and the second equality comes from Lemma \ref{le10}. This implies that part (iii) holds.
\end{proof}

To show that the whole sequence $\{x^k\}$ converges to $x^*$, we need the following proposition. Its original version is due to \cite{more1983computing} and a slightly modified version can be found in \cite[Prop. 5.4]{kanzow1999qp}. We generalize the modified version to metric spaces, and the proof is provided in Appendix \ref{AppendixC}.
\begin{proposition}\label{pro4}
	Let $X$ be a complete metric space, and let $p^*\in X$ be an isolated accumulation point of the sequence $\{p^k\}\subset X$. If for any infinite subset $\mathcal{K}\subseteq \mathbb{N}$ with $\{p^k\}_{\mathcal{K}}\to p^*$, there exists an infinite subset $\mathcal{K}'\subset\mathcal{K}$ such that $\{{\rm dist}(p^{k+1},p^k)\}_{\mathcal{K}'}\rightarrow 0$, where ${\rm dist}(p^{k+1},p^k)$ denotes the distance between $p^{k+1}$ and $p^k$ in $X$. Then the whole sequence $\{p^k\}$ converges to $p^*$.
\end{proposition}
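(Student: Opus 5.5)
I would argue by contradiction, in the spirit of Moré--Sorensen. Suppose $\{p^k\}$ does not converge to $p^*$. Since $p^*$ is an isolated accumulation point, there is $\epsilon>0$ such that the closed ball $\bar B(p^*,\epsilon)$ contains no accumulation point of $\{p^k\}$ other than $p^*$. Shrinking $\epsilon$ if needed, non-convergence gives infinitely many indices with ${\rm dist}(p^k,p^*)>\epsilon$. Because $p^*$ is an accumulation point, there are also infinitely many indices with ${\rm dist}(p^k,p^*)<\epsilon/2$. Hence the sequence enters $B(p^*,\epsilon/2)$ and leaves $\bar B(p^*,\epsilon)$ infinitely often.

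For each such excursion I pick an exit index. Define $\mathcal{K}$ as the set of indices $k$ with $p^k\in \bar B(p^*,\epsilon)$ and $p^{k+1}\notin \bar B(p^*,\epsilon)$, where the excursion started in $B(p^*,\epsilon/2)$; this set is infinite. A sharper choice is to take, within each excursion, the last index $k$ before exiting for which ${\rm dist}(p^k,p^*)\le\epsilon/2$. Then all intermediate points lie in the annulus $\epsilon/2<{\rm dist}\le\epsilon$, apart from the final jump out.

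The key step is to show that $\{p^k\}_{\mathcal{K}}\to p^*$ along a suitable choice of $\mathcal{K}$, so that the hypothesis applies. I expect this to be the main obstacle, because completeness of $X$ does not give compactness, and the annulus may contain infinitely many iterates with no accumulation point. The cleanest fix is to choose $\mathcal{K}$ as the indices $k$ with ${\rm dist}(p^k,p^*)\le\epsilon/2$ whose successor satisfies ${\rm dist}(p^{k+1},p^*)>\epsilon/2$.

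The subtlety is that $\{p^k\}_{\mathcal K}$ must actually converge to $p^*$. I would handle this with a two-scale argument. First, any subsequence that converges must converge to $p^*$, since no other accumulation point lies in $\bar B(p^*,\epsilon)$. Second, the hypothesis is applied to subsets converging to $p^*$. So I would refine $\mathcal{K}$ using a decreasing sequence of radii $\epsilon_j\to0$. For each $j$, infinitely many excursions start inside $B(p^*,\epsilon_j)$, and within each such excursion I take the last index whose point lies within $\epsilon_j$ of $p^*$. Diagonalizing over $j$ produces an infinite $\mathcal{K}$ with $\{p^k\}_{\mathcal K}\to p^*$ and ${\rm dist}(p^{k+1},p^*)>\epsilon_j$ for the corresponding $k$.

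By hypothesis, there is an infinite $\mathcal{K}'\subset\mathcal{K}$ with ${\rm dist}(p^{k+1},p^k)\to0$ on $\mathcal K'$, hence $p^{k+1}\to p^*$ along $\mathcal K'$. To get a contradiction, I still need the one-step lower bound ${\rm dist}(p^{k+1},p^k)\ge c>0$ along $\mathcal{K}$. I would secure this by fixing a single radius $\epsilon/2$ for the exit criterion. Taking $k\in\mathcal K$ to be the last index with ${\rm dist}(p^k,p^*)\le\delta_j$ before reaching distance above $\epsilon/2$ would fail to give a gap, so instead I use the following argument. Apply the hypothesis repeatedly via the triangle inequality to the successive points of the path from near $p^*$ to outside $B(p^*,\epsilon/2)$. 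The points between then lie in the annulus $[\delta_j,\epsilon]$, and extracting the annulus points along the path yields either an accumulation point in the annulus, which is impossible, or the sequence of first annulus indices. For that sequence, the predecessor lies within $\delta_j\to 0$ of $p^*$, while the point itself is at distance at least roughly $\epsilon/4$ from $p^*$, giving ${\rm dist}(p^{k+1},p^k)\ge\epsilon/8$ on a subsequence converging to $p^*$. This contradicts the hypothesis and completes the argument.
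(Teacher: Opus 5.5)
\textbf{The gap is in your final paragraph.} You claim a dichotomy: either the iterates in the annulus $\delta_j<{\rm dist}(p^k,p^*)\le\epsilon$ have an accumulation point (impossible), or the first annulus index of each excursion lies at distance roughly $\epsilon/4$ from $p^*$ while its predecessor is within $\delta_j$. Neither alternative follows from anything you have. Infinitely many iterates in the annulus with no accumulation point is entirely possible in a complete metric space. Nothing prevents the path from crossing the annulus in arbitrarily small steps, so the first point beyond radius $\delta_j$ may lie at distance $\delta_j+o(1)$. You also cannot ``apply the hypothesis repeatedly'' along the path, since it only constrains steps along index sets on which $p^k\to p^*$. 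Your earlier diagonalization, which produces $\mathcal{K}$ with $p^k\to p^*$ and ${\rm dist}(p^{k+1},p^*)>\epsilon_j$, is fine but gives no lower bound on the step.

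In fact no argument can close this gap, because the proposition is false as stated. In $X=\ell^2$, let the sequence run through consecutive blocks $0,\tfrac1j e_j,\tfrac2j e_j,\dots,e_j,\tfrac{j-1}{j}e_j,\dots,\tfrac1j e_j$ for $j=1,2,\dots$. Every step taken in block $j$ has length $1/j$, so ${\rm dist}(p^{k+1},p^k)\to0$ along every infinite index set and the hypothesis holds. Any convergent subsequence must have block index $j\to\infty$, so it converges coordinatewise to $0$ and its limit is $0$. Hence $0$ is the unique, and therefore isolated, accumulation point. Yet $\|p^k\|=1$ infinitely often.

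What is missing is local compactness (closed balls about $p^*$ are compact), not completeness. The paper's proof uses exactly this, without saying so, where it asserts $\{p^k\}_{\mathcal{K}_1}\to p^*$ for $\mathcal{K}_1=\{k\mid {\rm dist}(p^k,p^*)\le r\}$; completeness plays no role there. The step is legitimate in the application, where $X=\mathcal{M}$ is a complete finite-dimensional Riemannian manifold and closed bounded sets are compact by Hopf--Rinow. Once you add that hypothesis, your two-scale diagonalization is unnecessary and a single radius suffices. If the sequence does not converge, the indices in $\mathcal{K}_1$ whose successor leaves the ball form an infinite set. Along that set $p^k\to p^*$, while eventually ${\rm dist}(p^{k+1},p^k)>3r/4$, contradicting the hypothesis. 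Your instinct that completeness is not enough was right; the correct response is to strengthen the hypothesis rather than try to prove the statement as written.
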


Based on Proposition \ref{pro4}, we can establish the strong convergence of Algorithm \ref{algo1}.
\begin{theorem} \label{le11}
	Suppose that Assumptions \ref{A1}-\ref{A5} hold. Then the whole sequence $\{x^k,\lambda^{k0}\}$ converges to $(x^*,\lambda^*)$.
\end{theorem}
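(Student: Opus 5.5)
We apply Proposition \ref{pro4} with $X=\mathcal{M}$ and the Riemannian distance. Completeness of $\mathcal{M}$ is assumed, so it suffices to show two things: $x^*$ is an isolated accumulation point of $\{x^k\}$, and along every subsequence converging to $x^*$ a further subsequence has ${\rm dist}(x^{k+1},x^k)\to 0$. Once $x^k\to x^*$ is known, we show $\lambda^{k0}\to\lambda^*$ separately.

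\emph{Isolatedness.} By Lemma \ref{pro3}, the transformed problem \eqref{Epro} satisfies LICQ, strict complementarity and SOSC at $(\hat{x}^*,\lambda^*)$. The standard Euclidean argument, e.g.\ via the Fischer--Burmeister reformulation \eqref{Phixlam}, then applies: under these conditions $\Phi'(\hat x^*,\lambda^*)$ is nonsingular, so $(\hat x^*,\lambda^*)$ is an isolated KKT pair of \eqref{Epro}. Lemma \ref{le9.5} transfers this through the chart $\varphi$, so $x^*$ is an isolated KKT point of \eqref{TRpro} in $\mathcal{U}$. Theorem \ref{th2} says every accumulation point of $\{x^k\}$ is a KKT point of \eqref{TRpro}. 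Since $\{x^k\}$ is bounded (Assumption \ref{A2}) and $\{\lambda^{k0}\}$ is bounded (Lemma \ref{le6}), every accumulation point arises this way, and hence $x^*$ is an isolated accumulation point.

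\emph{Vanishing steps.} Let $\{x^k\}_{\mathcal{K}}\to x^*$. Since $t_k\le 1$, $\|\tilde\eta^k\|\le\|\eta^k\|$, and the retraction satisfies ${\rm dist}(x, R_x(\xi))\le c\|\xi\|$ locally uniformly (smoothness of $R$ near the compact closure of the bounded iterates), we get ${\rm dist}(x^{k+1},x^k)\le 2c\|\eta^k\|$. It therefore suffices to find $\mathcal{K}'\subset\mathcal{K}$ with $\|\eta^k\|_{\mathcal{K}'}\to0$.

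Pass to a subsequence on which $\lambda^{k0}\to\bar\lambda$, $\eta^{k0}\to\eta^*$, $\mathcal{H}_k\to\mathcal{H}_*$, $\alpha^k\to\alpha^*$ and $\beta^k\to\beta^*$. Theorem \ref{th2} and Lemma \ref{le9.4} give the limiting linear system \eqref{th4.1pro1}. That system has the unique solution $(0_{x^*},\lambda^*)$ whenever the limiting operator $\mathcal{A}_*$ is nonsingular, which was shown in the proof of Theorem \ref{th2} using Assumption \ref{A1} and \eqref{proof-th2-2}. Hence $\eta^{k0}\to0$ along the subsequence.

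Next, $\lambda^*\ge0$, so $\min\{\lambda_i^{k0},0\}^3\to0$. Lemma \ref{le3}(i) then gives $\langle{\rm grad}F,\eta^{k1}\rangle\to0$, and by \eqref{relationship2} together with Lemma \ref{le5} we have $\eta^{k1}-\eta^{k0}=O(\|v^{k1}\|)\to0$. Lemma \ref{le6}(ii) then yields $\eta^k\to0$. The case in which the limiting $\beta_i^*$ does not vanish is harmless, because nonsingularity of $\mathcal{A}_*$ covers all cases.

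\emph{Convergence of multipliers.} With $x^k\to x^*$, the bounded sequence $\{\lambda^{k0}\}$ has accumulation points. Apply the argument above to any subsequence, or use Lemma \ref{le7} with the fact that $\eta^k\to0$ along subsequences. Each such accumulation point must be the unique KKT multiplier $\lambda^*$, unique by LICQ, so the whole sequence $\lambda^{k0}\to\lambda^*$.

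\emph{Main obstacle.} The hardest step is isolatedness of $x^*$ as a KKT point. It requires the Euclidean local-uniqueness result under LICQ, strict complementarity and SOSC, which we obtain through the normal-coordinate transfer. A secondary subtlety is establishing $\eta^{k0}\to0$ along $\mathcal{K}$ when $\mu^k$ may shrink; this is handled because $\mathcal{A}_*$ remains nonsingular in every limiting regime.
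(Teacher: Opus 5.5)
Your route is the paper's. You apply Proposition \ref{pro4} on $\mathcal{M}$, get isolatedness of $x^*$ from Lemma \ref{pro3} plus a Euclidean local-uniqueness result pulled back by Lemma \ref{le9.5}, show $\|\eta^k\|\to0$ along subsequences via the nonsingular limit operator $\mathcal{A}_*$, and finish with uniqueness of the multiplier. Your direct chain $\eta^{k0}\to0\Rightarrow\eta^{k1}\to0\Rightarrow\eta^k\to0$ harmlessly replaces the paper's contradiction argument. Nonsingularity of the Fischer--Burmeister Jacobian only isolates the pair $(\hat x^*,\lambda^*)$; to isolate the point you also need that, under LICQ, multipliers of nearby KKT points are close to $\lambda^*$.

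The step that fails as justified is your handling of the regime where $\mu_i^k$ shrinks, which you dismiss because ``$\mathcal{A}_*$ remains nonsingular in every limiting regime''. Nonsingularity only gives uniqueness. For the unique solution of $\mathcal{A}_*(\eta,\lambda)=(-{\rm grad}F_{\bar\rho}(x^*),0)$ to be $(0_{x^*},\lambda^*)$, that pair must actually solve the system, i.e.\ $\beta_i^*\lambda_i^*=0$ for all $i$. Since $\lambda_i^*>0$ for $i\in\mathcal{L}(x^*)$ by Lemma \ref{Atp}(i), this requires $\beta_i^*=0$ on $\mathcal{L}(x^*)$.

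If $\mu_i^k\to0$ with $|c_i(x^k)|/\mu_i^k\not\to0$, then $\beta_i^*>0$. Any solution with $\eta=0_{x^*}$ would then have $\lambda_i=0$. It would therefore write $-{\rm grad}F_{\bar\rho}(x^*)$ as a combination of active gradients whose $i$-th coefficient differs from $\lambda_i^*>0$, contradicting uniqueness under LICQ. So in that regime the unique solution has $\eta^*\neq0_{x^*}$, and your conclusion $\eta^{k0}\to0$ breaks down.

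The regime has to be excluded, not absorbed. Along any subsequence with $x^k\to x^*$, suppose $\mu_i^k\to0$ for some $i\in\mathcal{L}(x^*)$. Then $\|\eta^{k-1}\|\to0$ and $\limsup\lambda_i^{(k-1)0}\le0$. But Lemmas \ref{le7} and \ref{le8} give $x^{k-1}\to x^*$ and $\lambda^{(k-1)0}\to\lambda^*$ with $\lambda_i^*>0$, a contradiction; this is exactly the argument in the proof of Lemma \ref{le9.4}. Hence $\mu_i^k$ stays bounded away from $0$, and $c_i(x^k)\to0$ then gives $\beta_i^k\to0$ for $i\in\mathcal{L}(x^*)$. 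With that in place, your nonsingularity argument goes through. The paper's proof relies on the same fact implicitly, via the second case of Theorem \ref{th2}.
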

\begin{proof}
	It follows from Lemma \ref{le9.5} that $\hat{x}^*$ is a KKT point of problem \eqref{Epro}. This together with Lemma \ref{pro3} shows that $\hat{x}^*$ is an isolated KKT point of problem \eqref{Epro} (see \cite{robinson1980strongly}). Then from Lemma \ref{le9.5} and the smoothness of $\varphi$, we know that ${x}^*$ is an isolated accumulation of the sequence $\{{x}^k\}$. 
	
	From Proposition \ref{pro4}, to show that the whole sequence $\{x^k\}$ converges to $x^*$, we only need to show that for every sequence $\{x^k\}_{\mathcal{K}}$ converges to $x^*$, there is a subsequence $\{x^k\}_{\mathcal{K}'}$ with ${\mathcal{K}}'\subseteq{\mathcal{K}}$ such that $\{{\rm dist}(x^{k+1},x^k)\}_{\mathcal{K}'}\rightarrow 0$.
	We first show that $\{\|\eta^k\|\}_{\mathcal{K}'}\rightarrow 0$. Suppose by contradiction that 
	\begin{equation}\label{proof-le11-4}
		\mathop{\rm lim\,inf}\limits_{k\in\mathcal{K}'}\|\eta^{k}\|>0.
	\end{equation}
	Then it follows from Lemma \ref{le6} (ii) that $\liminf_{k\in\mathcal{K}'}\|\eta^{k1}\|>0$. By the boundedness of $\{\mathcal{H}_k\}$, $\{\alpha_i^k\}$ and $\{\beta_i^k\}$, $i\in \mathcal{L}$, without loss of generality, we assume that $\{\mathcal{H}_k\}_{\mathcal{K}'}\rightarrow \mathcal{H}^*$, $\{\alpha_i^k\}_{\mathcal{K}'}\rightarrow\alpha_i^*$ and $\{\beta_i^k\}_{\mathcal{K}'}\rightarrow\beta_i^*$, $i\in \mathcal{L}$. Therefore \begin{equation}\label{proof-le11-1}
		\{\mathcal{A}_k\}_{\mathcal{K}'}\rightarrow\mathcal{A}_*.
	\end{equation}
	Similar to the second part of proof of Theorem \ref{th2}, we know that $\mathcal{A}_*$ is nonsingular. On the other hand, since $\{\lambda^{k0}\}_{\mathcal{K}'}\rightarrow\lambda^*\geq 0$, we have  $$\left\{\left(\min\{\lambda_i^{k0},0\}\right)^3\right\}_{\mathcal{K}'}\rightarrow 0, \ \forall i\in \mathcal{L}.$$ 
	It then follows from \eqref{proof-le11-1} and the continuity of ${\rm grad}F_{\bar{\rho}}$ that $\{(\eta^{k1},\lambda^{k1})\}_{\mathcal{K}'}$ converges to the solution of the following linear system:
	\begin{equation}\label{proof-le11-3}
		\mathcal{A}_*(\eta,\lambda)
		=
		\left(-{\rm grad}F_{\bar{\rho}}(x^*),0\right).
	\end{equation}
	Then by the nonsingularity of $\mathcal{A}_*$, we know that the solution of the above system is unique. Moreover, since $(x^*,\lambda^*)$ is a KKT point of problem \eqref{TRpro}, then $(0,\lambda^*)$ is the unique solution of \eqref{proof-le11-3}. Thus we must have $\{\eta^{k1}\}_{\mathcal{K}'}\rightarrow 0_{x^*}$, which is a contradiction. Thus $\{\|\eta^k\|\}_{\mathcal{K}'}\rightarrow 0$. 
	
	Next, since $\|\tilde{\eta}^{k}\|\leq\|\eta^k\|$, we also have $\{\tilde{\eta}^{k}\}_{\mathcal{K}'}\rightarrow 0$. It then follows from the continuity of $R$ and $t_k\in(0,1]$ that
	$$\lim_{k\in\mathcal{K}'}x^{k+1}=\lim_{k\in\mathcal{K}'}(R_{x^k}(t_k\eta^k+t_k^2\tilde{\eta}^k))=x^*.$$ 
	This implies that  $\{{\rm dist}(x^{k+1},x^k)\}_{\mathcal{K}'}\rightarrow 0$. Thus,
	by Proposition \ref{pro4}, we obtain $\{x^k\}\rightarrow x^*$. Furthermore, by the uniqueness of the KKT multiplier, we obtain $\{(x^k,\lambda^{k0})\}\rightarrow (x^*,\lambda^*)$.
\end{proof}

In what follows, we show that the step size $t_k$ always equals $1$
for $k$ sufficiently large, that is the Maratos effect does not occur.
For any $x^k\in\mathcal{M}$, $\xi_{x^k}\in T_{x^k}\mathcal{M}$, and $\lambda\in\mathbb{R}^{m+\ell}$, we define $\pi^k_1:T_{x^k}\mathcal{M}\times\mathbb{R}^{m+\ell}\rightarrow T_{x^k}\mathcal{M}$ and $\pi^k_2:T_{x^k}\mathcal{M}\times\mathbb{R}^{m+\ell}\rightarrow \mathbb{R}^{m+\ell}$ as $\pi^k_1(\xi_{x^k},\lambda)=\xi_{x^k}$ and $\pi^k_2(\xi_{x^k},\lambda)=\lambda$, respectively. It is clear that $\pi^k_1$ and $\pi^k_2$ are linear operators for all $k$. Then we have $\|\pi^k_1\|_{\rm op}=1$ and $\|\pi^k_2\|_{\rm op}=1$ for all $k$. It then follows from \eqref{relationship} that 
\begin{subequations}\label{relationship20}
	\begin{align}
		\label{relationship21}
		&\eta^{k0}=\pi^k_1(\mathcal{A}_k^{-1}(-{\rm grad}F_{\bar{\rho}}(x^k),0)),\,\,\lambda^{k0}=\pi^k_2(\mathcal{A}_k^{-1}(-{\rm grad}F_{\bar{\rho}}(x^k),0)),\\
		\label{relationship22}
		&\eta^{k1}=\eta^{k0}+\pi^k_1(\mathcal{A}_k^{-1}(0_{x^k},v^{k1})),\,\,\lambda^{k1}=\lambda^{k0}+\pi^k_2(\mathcal{A}_k^{-1}(0_{x^k},v^{k1})),\\
		\label{relationship23}
		&\eta^{k2}=\eta^{k1}+\pi^k_1(\mathcal{A}_k^{-1}(0_{x^k}, \Delta v^{k})),\,\,\lambda^{k2}=\lambda^{k1}+\pi^k_2(\mathcal{A}_k^{-1}(0_{x^k},\Delta v^{k})),
	\end{align}
\end{subequations}
where $\Delta v^{k}\in\mathbb{R}^{m+\ell}$ with the components $-\alpha_i^k\|\eta^{k1}\|^\nu$, $i\in \mathcal{L}$. From \eqref{direction}, we further obtain 
\begin{equation}\label{relationship30}
	\eta^{k}=\eta^{k1}+\pi^k_1(\mathcal{A}_k^{-1}(0_{x^k},\theta_k\Delta v^{k})),\,\,\lambda^{k}=\lambda^{k1}+\pi^k_2(\mathcal{A}_k^{-1}(0_{x^k},\theta_k\Delta v^{k})).
\end{equation}


\begin{corollary}\label{co1}
	Suppose that Assumptions \ref{A1}-\ref{A6} hold. Then we have
	\begin{description}
		\item[(i)] $\eta^k\rightarrow 0_{x^*}$, $\eta^{k0}\rightarrow 0_{x^*}$, $\eta^{k1}\rightarrow 0_{x^*}$ as $k\to\infty$;
		\item [(ii)] $\lambda^k\rightarrow\lambda^*$, $\lambda^{k1}\rightarrow\lambda^*$ as $k\to\infty$;
		\item [(iii)] $\mu^k\rightarrow\lambda^*$ as $k\to\infty$.
	\end{description}
	In addition, for $k$ large enough, if follows $\mathcal{L}_k=\mathcal{L}(x^*)$.
\end{corollary}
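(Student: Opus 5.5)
The argument runs along the whole sequence, using Theorem \ref{le11}: we know $x^k\to x^*$ and $\lambda^{k0}\to\lambda^*$, and $\lambda^*$ is the unique KKT multiplier of \eqref{TRpro} at $x^*$ (Lemma \ref{le9.4}). The first step is to show $\eta^{k0}\to 0_{x^*}$. By boundedness (Assumption \ref{A3} and \eqref{alha-beta}), every subsequence has a further subsequence along which $\mathcal{H}_k\to\mathcal{H}_*$, $\alpha_i^k\to\alpha_i^*$ and $\beta_i^k\to\beta_i^*$, so that $\mathcal{A}_k\to\mathcal{A}_*$. As in the second part of the proof of Theorem \ref{th2}, $\mathcal{A}_*$ is nonsingular: the argument there uses only \eqref{proof-th2-2}, Assumption \ref{A3} and the LICQ (Assumption \ref{A1}). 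Since $(x^*,\lambda^*)$ is a KKT pair, $(0_{x^*},\lambda^*)$ solves $\mathcal{A}_*(\eta,\lambda)=(-{\rm grad}F_{\bar\rho}(x^*),0)$. Indeed, $\lambda_i^*c_i(x^*)=0$ makes the second block vanish, because $\beta_i^*=0$ whenever $\lambda_i^*>0$; this holds since $\mu_i^k\to\lambda_i^*>0$ by (iii), or can be seen directly. Hence the limit of $(\eta^{k0},\lambda^{k0})$ is $(0,\lambda^*)$. Because every subsequence has the same limit, $\eta^{k0}\to 0$.

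Next, $v^{k1}\to0$ because $\min\{\lambda_i^{k0},0\}\to\min\{\lambda_i^*,0\}=0$. Combining this with \eqref{relationship22} and the uniform bound on $\|\mathcal{A}_k^{-1}\|_{\rm op}$ from Lemma \ref{le5}, we get $\eta^{k1}-\eta^{k0}\to0$ and $\lambda^{k1}-\lambda^{k0}\to0$. Therefore $\eta^{k1}\to0$ and $\lambda^{k1}\to\lambda^*$. Then \eqref{relationship30} with $\theta_k\in[0,1]$ and $\|\Delta v^k\|\le C\|\eta^{k1}\|^\nu\to0$ gives $\eta^k\to0$ and $\lambda^k\to\lambda^*$. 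This completes (i) and (ii).

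For (iii), \eqref{gamma} gives $\mu_i^{k+1}=\min\{\max\{\lambda_i^{k0},\|\eta^k\|\},\bar\mu\}$. We have $\max\{\lambda_i^{k0},\|\eta^k\|\}\to\max\{\lambda_i^*,0\}=\lambda_i^*$, since $\lambda^*\ge0$. Assumption \ref{A6} gives $\lambda_i^*<\bar\mu$, so $\mu_i^{k+1}\to\lambda_i^*$.

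Finally, recall $\mathcal{L}_k=\{i:c_i(x^k)\ge-\lambda_i^k\}$, so that $c_i(x^k)+\lambda_i^k\to c_i(x^*)+\lambda_i^*$. By strict complementarity (Lemma \ref{Atp}(i)), this limit equals $\lambda_i^*>0$ if $i\in\mathcal{L}(x^*)$ and $c_i(x^*)<0$ otherwise. Hence, for all large $k$, $i\in\mathcal{L}_k$ if and only if $i\in\mathcal{L}(x^*)$.

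The main obstacle is the nonsingularity of every limit operator $\mathcal{A}_*$, needed to pass from $\lambda^{k0}\to\lambda^*$ to $\eta^{k0}\to0$. I expect it to go through exactly as in Theorem \ref{th2}. If there is difficulty identifying $\beta_i^*$ for active indices, I would avoid it by using $\eta^{k0}$ directly: \eqref{s11} in the limit gives $\mathcal{H}_*\eta^*=-{\rm grad}F_{\bar\rho}(x^*)-\sum\lambda_i^*{\rm grad}c_i(x^*)=0$, and the positive definiteness of $\mathcal{H}_*$ then forces $\eta^*=0$ without invoking nonsingularity at all.
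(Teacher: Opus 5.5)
Your proposal is correct and follows essentially the paper's route. For $\eta^{k0}$ you use the limit system with nonsingular $\mathcal{A}_*$ and uniqueness of $(0_{x^*},\lambda^*)$; for $\eta^{k1},\eta^k,\lambda^{k1},\lambda^k$ you perturb via \eqref{relationship22} and \eqref{relationship30} with the uniformly bounded inverses; (iii) comes from the update \eqref{gamma} with Assumption \ref{A6}; and $\mathcal{L}_k=\mathcal{L}(x^*)$ comes from strict complementarity.

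One point to tighten is your justification of $\beta_i^*\lambda_i^*=0$. Citing (iii) there is circular, since your proof of (iii) uses $\|\eta^k\|\to 0$. The direct bound $\mu_i^{k+1}\ge\min\{\lambda_i^{k0},\bar\mu\}$ is enough: it keeps $\mu_i^k$ bounded away from zero whenever $\lambda_i^*>0$. Your fallback avoids the issue altogether and is cleaner than the paper's uniqueness argument, which leaves this point implicit. That fallback takes the first block of \eqref{s1} in the limit, so $\mathcal{H}_*\eta^*=0$, and then uses the uniform positive definiteness of $\mathcal{H}_k$.
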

\begin{lemma}\label{le12}
	Suppose that Assumptions \ref{A1}-\ref{A6} hold. Then we have
	\begin{description}
		\item[(i)] $\alpha_i^k\rightarrow 1$, $\beta_i^k\rightarrow 0$ and $\lambda_i^k\delta^k_i\rightarrow 1/\sqrt{2}$  for $i\in \mathcal{L}(x^*)$ as $k\to\infty$;
		\item[(ii)] $\alpha_i^k\rightarrow 0$ and $\beta_i^k\rightarrow 1$ for $i\notin \mathcal{L}(x^*)$ as $k\to\infty$.
	\end{description} 
\end{lemma}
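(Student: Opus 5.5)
The plan is to read everything off Corollary \ref{co1}, which gives $x^k\to x^*$, $\lambda^k\to\lambda^*$ and $\mu^k\to\lambda^*$, together with the strict complementarity of problem \eqref{TRpro} at $(x^*,\lambda^*)$ from Lemma \ref{Atp}(i) and Lemma \ref{le9.4}. Two facts from Section \ref{Sec:global} are also needed: $c_i(x^k)<0$ for all $i\in\mathcal{L}$ (the iterates stay strictly feasible), and $\mu_i^k>0$ for all $k$. Abbreviate $c_i^k:=c_i(x^k)$ and $s_i^k:=\sqrt{(c_i^k)^2+(\mu_i^k)^2}>0$. Then by \eqref{alpha},
\begin{equation*}
\alpha_i^k=\frac{c_i^k}{s_i^k}+1,\qquad (\beta_i^k)^2=1-\frac{\mu_i^k}{s_i^k}=\frac{s_i^k-\mu_i^k}{s_i^k}=\frac{(c_i^k)^2}{s_i^k\,(s_i^k+\mu_i^k)},
\end{equation*}
where the last identity comes from rationalizing $s_i^k-\mu_i^k$. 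Since $c_i^k<0$, this yields the closed form $\beta_i^k=-c_i^k/\sqrt{s_i^k(s_i^k+\mu_i^k)}$, which is the key algebraic step.

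\emph{Part (ii), $i\notin\mathcal{L}(x^*)$.} Here $c_i(x^*)<0$, and strict complementarity forces $\lambda_i^*=0$, so $\mu_i^k\to0$. Hence $s_i^k\to|c_i(x^*)|>0$ and $c_i^k/s_i^k\to-1$. This gives $\alpha_i^k\to0$ and $(\beta_i^k)^2\to1-0=1$, i.e.\ $\beta_i^k\to1$.

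\emph{Part (i), $i\in\mathcal{L}(x^*)$.} Here $c_i^k\to0$, and strict complementarity gives $\lambda_i^*>0$, so $\mu_i^k\to\lambda_i^*>0$ and $s_i^k\to\lambda_i^*$. Then $\alpha_i^k\to0/\lambda_i^*+1=1$ and $(\beta_i^k)^2\to1-\lambda_i^*/\lambda_i^*=0$.

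For the product, note $\delta_i^k=-\beta_i^k/c_i^k$ is well defined because $c_i^k<0$. The closed form gives $\delta_i^k=1/\sqrt{s_i^k(s_i^k+\mu_i^k)}$, which tends to $1/\sqrt{\lambda_i^*\cdot2\lambda_i^*}=1/(\sqrt{2}\lambda_i^*)$. Multiplying by $\lambda_i^k\to\lambda_i^*$ from Corollary \ref{co1}(ii) gives $\lambda_i^k\delta_i^k\to1/\sqrt2$.

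The only delicate point is this last limit. Written directly, $\delta_i^k$ is the quotient $\beta_i^k/(-c_i^k)$, which has the indeterminate form $0/0$. The rationalization above resolves it exactly, so no rate estimates on $c_i^k$ versus $\mu_i^k$ are needed. The step does, however, depend essentially on $\lambda_i^*>0$ (strict complementarity, Assumption \ref{A4}) and on $\mu^k\to\lambda^*$, which is where Assumption \ref{A6} enters through Corollary \ref{co1}(iii).
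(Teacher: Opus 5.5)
Your proof is correct and follows the paper's argument. Both take the limits of $\alpha_i^k,\beta_i^k$ from Corollary~\ref{co1} and the strict complementarity in Lemma~\ref{Atp}(i). Both also resolve the $0/0$ form of $\delta_i^k$ with the same rationalization, $(\delta_i^k)^2=1/\bigl(s_i^k(s_i^k+\mu_i^k)\bigr)$, which the paper writes for $(\lambda_i^k\delta_i^k)^2$. Working with $\delta_i^k$ itself rather than its square has one small advantage: it settles the sign of the limit $1/\sqrt{2}$ directly.
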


From \eqref{s12}, we have that $\alpha_i^k\langle{\rm grad}c_i(x^k),\eta^{k0}\rangle-\sqrt{2}\beta_i^k\lambda^{k0}_i=0$ for all $i\in \mathcal{L}$. Considering $i\notin \mathcal{L}(x^*)$, it follows from Lemma \ref{le12} that
\begin{equation}\label{lambdak0}
	\lambda^{k0}_i=o(\|\eta^{k0}\|).
\end{equation}
Therefore, since $\alpha_i^k\in[0,1)$ and $\theta_k\in[0,1]$, then by \eqref{relationship22}, \eqref{relationship30} and the boundedness of $\{\|\mathcal{A}_k^{-1}\|_{\rm op}\}$ and $\{\|\pi^k_1\|_{\rm op}\}$, we obtain
\begin{equation}\label{eta}
	\eta^{k1}=\eta^{k0}+o(\|\eta^{k0}\|^2)\,\,\ \ {\rm and\,\,}\ \ \eta^{k}=\eta^{k0}+o(\|\eta^{k0}\|^2).
\end{equation}

The following lemma shows that the correction direction  $\tilde{\eta}^k$ is of higher-order than the main search direction ${\eta}^k$. This result is the Riemannian counterpart of \cite[Lem. 4.6]{qi2000new}.  For completeness, we provide its proof in Appendix~\ref{AppendixC}.

\begin{lemma}\label{le13}
	Suppose that  Assumptions \ref{A1}-\ref{A6} hold. Then, for $k$ large enough, the direction $\tilde{\eta}^k$ is obtained by solving \eqref{subpro}. Furthermore, we have
	$$\|\tilde{\eta}^k\|=O\left(\max\left\{\|\eta^k\|^2,\max_{i\in \mathcal{L}(x^*)}\bigg\arrowvert\dfrac{\alpha_i^k}{\sqrt{2}\delta^k_i\lambda^k_i}-1\bigg\arrowvert\|\eta^k\|\right\}\right)=o(\|\eta^k\|).$$
\end{lemma}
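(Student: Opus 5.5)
The plan is to solve the equality-constrained least squares subproblem \eqref{subpro} in closed form and then bound its right-hand side using the linear system \eqref{s42} that $(\eta^k,\lambda^k)$ satisfies.

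\emph{Solvability.} By Corollary~\ref{co1}, $\mathcal{L}_k=\mathcal{L}(x^*)$ for $k$ large. By Assumption~\ref{A1} the vectors $\{{\rm grad}c_i(x^*)\mid i\in\mathcal{L}(x^*)\}$ are linearly independent. Since $x^k\to x^*$ (Theorem~\ref{le11}), continuity (read, for instance, in the normal chart of Lemma~\ref{pro3}) gives two uniform facts for $k$ large:
\begin{itemize}
\item the gradients ${\rm grad}c_i(x^k)$, $i\in\mathcal{L}(x^*)$, stay linearly independent;
\item the smallest singular value of $N_k:\mathbb{R}^{|\mathcal{L}(x^*)|}\to T_{x^k}\mathcal{M}$, $N_k y=\sum_i y_i{\rm grad}c_i(x^k)$, is bounded away from zero.
\end{itemize}
The subproblem therefore has full-row-rank linear constraints and, by Assumption~\ref{A3}, a uniformly positive definite objective. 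Its unique solution is
\[
\tilde\eta^k=-\mathcal{H}_k^{-1}N_k\bigl(N_k^{*}\mathcal{H}_k^{-1}N_k\bigr)^{-1}b^k,\qquad b^k_i=c_i(R_{x^k}(\eta^k))+\varpi_k .
\]
This gives $\|\tilde\eta^k\|\le C\|b^k\|$ with $C$ independent of $k$. It remains to estimate $b^k$.

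\emph{Estimating $b^k$.} First, Taylor-expand $c_i\circ R_{x^k}$ at $0_{x^k}$ using ${\rm D}R_{x^k}(0)={\rm id}$:
\[
c_i(R_{x^k}(\eta^k))=c_i(x^k)+\langle{\rm grad}c_i(x^k),\eta^k\rangle+O(\|\eta^k\|^2).
\]
The $O$-constant is uniform because $R$ is smooth and $x^k$ ranges over a compact neighbourhood of $x^*$.

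Next, use \eqref{s42}. For $i\in\mathcal{L}(x^*)$, strict complementarity (Lemma~\ref{Atp}) and $\lambda^{k0}\to\lambda^*$ give $\lambda^{k0}_i>0$ for large $k$, so the cubic term vanishes. By \eqref{eta}, $\|\eta^{k1}\|^\nu=O(\|\eta^k\|^\nu)$. Writing $\beta_i^k=-\delta_i^kc_i(x^k)$, \eqref{s42} becomes
\[
\langle{\rm grad}c_i(x^k),\eta^k\rangle=-\frac{\sqrt2\,\delta_i^k\lambda_i^k}{\alpha_i^k}\,c_i(x^k)+O(\|\eta^k\|^\nu).
\]

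Two consequences follow.
\begin{itemize}
\item By Lemma~\ref{le12}(i), $\sqrt2\delta_i^k\lambda_i^k/\alpha_i^k\to1$, so the display shows $|c_i(x^k)|=O(\|\eta^k\|)$.
\item Adding $c_i(x^k)$ to both sides gives
\[
c_i(x^k)+\langle{\rm grad}c_i(x^k),\eta^k\rangle=c_i(x^k)\frac{\sqrt2\delta_i^k\lambda_i^k}{\alpha_i^k}\Bigl(\frac{\alpha_i^k}{\sqrt2\delta_i^k\lambda_i^k}-1\Bigr)+O(\|\eta^k\|^\nu),
\]
which is $O\bigl(|\tfrac{\alpha_i^k}{\sqrt2\delta_i^k\lambda_i^k}-1|\,\|\eta^k\|\bigr)+O(\|\eta^k\|^\nu)$.
\end{itemize}

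Finally, $\varpi_k$ is already dominated by the claimed maximum:
\begin{itemize}
\item $\|\eta^k\|^\varrho\le\|\eta^k\|^2$ because $\varrho>2$ and $\eta^k\to0$;
\item $|\cdot|^\kappa\|\eta^k\|^2\le\|\eta^k\|^2$ because the bracket tends to $0$ by Lemma~\ref{le12}.
\end{itemize}
Since $\nu>2$, the $O(\|\eta^k\|^\nu)$ term is also absorbed. Combining everything gives the stated $O(\max\{\cdot\})$ bound. It is $o(\|\eta^k\|)$ because both $\|\eta^k\|$ and the bracket tend to zero. In particular $\|\tilde\eta^k\|\le\|\eta^k\|$ for large $k$, so step 11 of Algorithm~\ref{algo1} does not reset $\tilde\eta^k$ to zero, and $\tilde\eta^k$ is indeed the subproblem solution.

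\emph{Main obstacle.} The delicate step is the uniformity in $k$: the remainder in the Taylor expansion of $c_i\circ R_{x^k}$ and the lower bound on the Gram operator $N_k^*\mathcal{H}_k^{-1}N_k$ live on the varying tangent spaces $T_{x^k}\mathcal{M}$. I would handle this by working in the normal coordinate chart $(\mathcal{U},\varphi)$. There $G_{\hat x}$ is continuous with $G_{\hat x^*}=I_n$, so all these quantities become continuous matrix-valued functions of $\hat x^k$, and compactness gives uniform constants.
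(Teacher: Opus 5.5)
Your proposal is correct and follows essentially the paper's route. Both arguments rest on three ingredients. First, uniform invertibility of the subproblem's KKT system; your Schur-complement closed form is equivalent to the paper's bounded $\|\mathcal{B}_k^{-1}\|_{\rm op}$. Second, a Taylor expansion of $c_i\circ R_{x^k}$. Third, rewriting \eqref{s42} with $\beta_i^k=-\delta_i^kc_i(x^k)$ to extract the factor $\frac{\alpha_i^k}{\sqrt{2}\delta_i^k\lambda_i^k}-1$. Your explicit check that $\|\tilde\eta^k\|\le\|\eta^k\|$ eventually holds, so the safeguard in step 11 never resets $\tilde\eta^k$, fills in a point the paper leaves implicit.
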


To demonstrate that the unit step size can be accepted by the arc search of Algorithm \ref{algo1}, we need to make additional assumptions. First, we introduce the concept of second-order retraction. A retraction $R$ is called a second-order retraction if for all $x\in\mathcal{M}$ and for all $\xi\in T_x\mathcal{M}$, $\dfrac{{\rm D}^2}{{\rm d}t^2}R_x(t\xi)\bigg\arrowvert_{t=0}=0$ holds, where the operator $\dfrac{{\rm D}^2}{{\rm d}t^2}$ is the covariant derivative. In fact, a second-order retraction is an approximation of the exponential mapping ${\rm Exp}$; see, e.g., \cite[Sec. 5.5]{absil2008optimization}.
\begin{assumption}\label{A7}
	The retraction $R$ is a second-order retraction.
\end{assumption}

\begin{assumption}\label{A8}
	The sequence $\{\mathcal{H}_k\}$ satisfies $\|(\mathcal{H}_k-{\rm Hess}_{x}L_{\bar{\rho}}(x^k,\lambda^k))\eta^k\|=o(\|\eta^k\|)$.
\end{assumption}

We can now prove that the Maratos effect does not occur in our algorithm.
\begin{theorem}\label{th1}
	Suppose that Assumptions \ref{A1}-\ref{A8} hold. Then for sufficiently large $k$, the step size $t_k$ is always equal to 1.
\end{theorem}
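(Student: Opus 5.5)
We must show that for all large $k$ the unit step passes both \eqref{lineg} and \eqref{linef}, i.e. with $x^{k+}:=R_{x^k}(\eta^k+\tilde{\eta}^k)$ we need $c_i(x^{k+})<0$ for all $i\in\mathcal{L}$ and
$$F_{\bar\rho}(x^{k+})\le F_{\bar\rho}(x^k)+\sigma\langle{\rm grad}F_{\bar\rho}(x^k),\eta^k\rangle .$$
The plan mirrors the Euclidean argument of \cite[Thm. 4.7]{qi2000new}. The Euclidean Taylor expansions are replaced by second-order expansions along the retraction, using Assumption \ref{A7}. For a $C^2$ function $h$ and $\xi\in T_{x^k}\mathcal{M}$ with $\xi\to0$, a second-order retraction gives
$$h(R_{x^k}(\xi))=h(x^k)+\langle{\rm grad}h(x^k),\xi\rangle+\tfrac12\langle{\rm Hess}\,h(x^k)[\xi],\xi\rangle+o(\|\xi\|^2).$$
Throughout we use Theorem \ref{le11}, Corollary \ref{co1} (so $\eta^k\to0$, $\lambda^k\to\lambda^*$, and $\mathcal{L}_k=\mathcal{L}(x^*)$), Lemma \ref{le12}, \eqref{eta}, and Lemma \ref{le13} ($\tilde\eta^k=o(\|\eta^k\|)$ is obtained from \eqref{subpro}).

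\textbf{Feasibility.} For $i\notin\mathcal{L}(x^*)$ we have $c_i(x^*)<0$, and $x^{k+}\to x^*$, so the inequality holds trivially. For $i\in\mathcal{L}(x^*)=\mathcal{L}_k$, expand
$$c_i(R_{x^k}(\eta^k+\tilde\eta^k))=c_i(R_{x^k}(\eta^k))+\langle{\rm grad}c_i(x^k),\tilde\eta^k\rangle+O(\|\eta^k\|\|\tilde\eta^k\|+\|\tilde\eta^k\|^2).$$
This step compares retraction of a sum with the sum of retractions, which follows from smoothness of $R$ and $\mathrm{D}R_x(0)=\mathrm{id}$. By the constraint in \eqref{subpro}, the first two terms equal $-\varpi_k$. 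Since $\varpi_k\ge\|\eta^k\|^\varrho$ with $\varrho<3$, we need the error to be $o(\varpi_k)$. This uses the refined bound of Lemma \ref{le13}, $\|\tilde\eta^k\|=O(\max\{\|\eta^k\|^2,\epsilon_k\|\eta^k\|\})$ with $\epsilon_k=\max_i|\alpha_i^k/(\sqrt2\delta_i^k\lambda_i^k)-1|$. Then $\|\eta^k\|\|\tilde\eta^k\|=O(\max\{\|\eta^k\|^3,\epsilon_k\|\eta^k\|^2\})$. The first term is $o(\|\eta^k\|^\varrho)$. The second is $o(\epsilon_k^\kappa\|\eta^k\|^2)$, since $\kappa<1$ and $\epsilon_k\to0$ by Lemma \ref{le12}. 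Hence $c_i(x^{k+})=-\varpi_k+o(\varpi_k)<0$.

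\textbf{Descent.} Set $\xi^k=\eta^k+\tilde\eta^k$ and expand $F_{\bar\rho}(R_{x^k}(\xi^k))$ to second order. Then use \eqref{s41} to write
$${\rm grad}F_{\bar\rho}(x^k)=-\mathcal{H}_k\eta^k-\sum_i\lambda_i^k{\rm grad}c_i(x^k).$$
This yields
$$F(x^{k+})-F(x^k)=-\langle\mathcal{H}_k\eta^k,\eta^k\rangle-\sum_i\lambda_i^k\langle{\rm grad}c_i,\eta^k+\tilde\eta^k\rangle+\tfrac12\langle{\rm Hess}F\,\eta^k,\eta^k\rangle+o(\|\eta^k\|^2).$$
For $i\in\mathcal{L}(x^*)$, expanding $c_i$ along the retraction together with the constraint of \eqref{subpro} and $c_i(x^{k+})$ gives
$$\langle{\rm grad}c_i,\eta^k+\tilde\eta^k\rangle=c_i(x^{k+})-c_i(x^k)-\tfrac12\langle{\rm Hess}c_i\,\eta^k,\eta^k\rangle+o(\|\eta^k\|^2).$$
For $i\notin\mathcal{L}(x^*)$ one has $\lambda_i^k=o(\|\eta^k\|)$ by \eqref{lambdak0} and \eqref{eta}, so these terms are negligible. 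Collecting the Hessians produces ${\rm Hess}_xL_{\bar\rho}(x^k,\lambda^k)$. Assumption \ref{A8} then replaces it with $\mathcal{H}_k$ up to $o(\|\eta^k\|^2)$. The terms $-\lambda_i^k(c_i(x^{k+})-c_i(x^k))$ need to be shown nonpositive up to $o(\|\eta^k\|^2)$. We have $c_i(x^{k+})\approx-\varpi_k<0$, and $-c_i(x^k)\lambda_i^k$ is expressed through \eqref{s42}: there $\sqrt2\beta_i^k\lambda_i^k=\alpha_i^k\langle{\rm grad}c_i,\eta^k\rangle+\text{h.o.t.}$ and $\delta_i^k=-\beta_i^k/c_i(x^k)$, so $\lambda_i^kc_i(x^k)$ is absorbed into $\langle{\rm grad}c_i,\eta^k\rangle$ with coefficient $\alpha_i^k/(\sqrt2\delta_i^k\lambda_i^k)\to1$. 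Combining these gives
$$F(x^{k+})-F(x^k)\le-\tfrac12\langle\mathcal{H}_k\eta^k,\eta^k\rangle+o(\|\eta^k\|^2).$$
Meanwhile, from \eqref{dk01} and \eqref{eta}, $\langle{\rm grad}F,\eta^k\rangle\ge-\langle\mathcal{H}_k\eta^k,\eta^k\rangle(1+o(1))$ after accounting for the multiplier terms. Since $\sigma<1/2$ and Assumption \ref{A3} holds, the target inequality follows.

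The main obstacle is this bookkeeping of the multiplier terms in the descent estimate. Precisely, one must show
$$\langle{\rm grad}F,\eta^k\rangle+\langle\mathcal{H}_k\eta^k,\eta^k\rangle=-\sum_i\lambda_i^k\langle{\rm grad}c_i,\eta^k\rangle$$
is matched exactly (to $o(\|\eta^k\|^2)$) by the $\lambda_i^kc_i(x^k)$ contributions via \eqref{s42}, so that the comparison yields the factor $\tfrac12-\sigma>0$. A second difficulty is justifying the retraction expansions uniformly in $k$ near $x^*$. For this we would work in the normal chart $(\mathcal{U},\varphi)$ and use Lemma \ref{le10}.
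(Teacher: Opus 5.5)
\textbf{Verdict: genuine gap in the descent part.} Your feasibility argument is correct and is the paper's. You are in fact more explicit than the paper about why $O(\|\eta^k\|\|\tilde\eta^k\|)=o(\varpi_k)$, using $\varrho<3$, $\kappa<1$ and the refined bound of Lemma \ref{le13}. Disposing of the indices $i\notin\mathcal{L}(x^*)$ via $\lambda_i^k=o(\|\eta^k\|)$ is also a legitimate simplification of the paper's treatment.

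\textbf{Where the descent argument fails.} After your substitutions (including Assumption \ref{A8}) you actually have
\[
F_{\bar\rho}(x^{k+})-F_{\bar\rho}(x^k)=-\tfrac12\langle\mathcal{H}_k[\eta^k],\eta^k\rangle+\sum_{i\in\mathcal{L}(x^*)}\lambda_i^kc_i(x^k)+o(\|\eta^k\|^2),
\]
and you discard the sum because it is nonpositive. That sum is not small. By \eqref{dk012}, $|c_i(x^k)|$ is generically of order $\|\eta^k\|$, not $\|\eta^k\|^2$. The same quantity also appears on the right-hand side of \eqref{linef}: by \eqref{s41}--\eqref{s42}, $\nu>2$, \eqref{lambdak0} and Lemma \ref{Atp},
\[
\langle{\rm grad}F_{\bar\rho}(x^k),\eta^k\rangle=-\langle\mathcal{H}_k[\eta^k],\eta^k\rangle+\sum_{i\in\mathcal{L}(x^*)}\frac{\sqrt2\delta_i^k\lambda_i^k}{\alpha_i^k}\lambda_i^kc_i(x^k)+o(\|\eta^k\|^2).
\]
So your claimed bound $\langle{\rm grad}F,\eta^k\rangle\ge-\langle\mathcal{H}_k\eta^k,\eta^k\rangle(1+o(1))$ is false; \eqref{dk01} points the other way. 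For instance, take $\min x+\frac12x^2$ subject to $-x\le0$ on $\mathbb{R}$ with $\mathcal{H}_k=1$. One finds $\eta^k\approx-x^k$, so $\langle{\rm grad}f(x^k),\eta^k\rangle\approx-x^k$ while $\langle\mathcal{H}_k[\eta^k],\eta^k\rangle\approx(x^k)^2$. Your final comparison $-\frac12\langle\mathcal{H}_k[\eta^k],\eta^k\rangle\le\sigma\langle{\rm grad}F,\eta^k\rangle$ then reads $-\frac12(x^k)^2\le-\sigma x^k$, which fails.

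Your closing paragraph senses this, but it asks for exact cancellation to $o(\|\eta^k\|^2)$. That is not available: the ratio $\sqrt2\delta_i^k\lambda_i^k/\alpha_i^k$ only tends to $1$, so the mismatch is in general only $o(\|\eta^k\|)$. It is also not needed.

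\textbf{How to repair it.} You need a sign argument with the sum kept. Subtracting the two displays (times $\sigma$) gives
\[
-\bigl(\tfrac12-\sigma\bigr)\langle\mathcal{H}_k[\eta^k],\eta^k\rangle+\sum_{i\in\mathcal{L}(x^*)}\Bigl(1-\sigma\frac{\sqrt2\delta_i^k\lambda_i^k}{\alpha_i^k}\Bigr)\lambda_i^kc_i(x^k)+o(\|\eta^k\|^2).
\]
By Lemma \ref{le12} the coefficient tends to $1-\sigma>0$. Also $\lambda_i^kc_i(x^k)\le0$, since $\lambda_i^k\to\lambda_i^*>0$ by Lemma \ref{Atp}. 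Assumption \ref{A3} then makes the whole expression negative.

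The paper organizes the same bookkeeping differently. It keeps $\frac12\langle{\rm grad}F_{\bar\rho}(x^k),\eta^k\rangle$ intact. It shows the remainder equals $\sum_{i\in\mathcal{L}(x^*)}\bigl(1-\frac{\sqrt2\delta_i^k\lambda_i^k}{2\alpha_i^k}\bigr)\lambda_i^kc_i(x^k)\le0$ plus terms killed by Assumption \ref{A8}. It then uses the \emph{upper} bound $\langle{\rm grad}F_{\bar\rho}(x^k),\eta^k\rangle\le-\tau a_1\|\eta^{k0}\|^2+o(\|\eta^{k0}\|^2)$, from Lemma \ref{le3}, \eqref{dk01} and \eqref{eta}, together with $\frac12-\sigma>0$.
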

\begin{proof}
	It is sufficient to prove that the inequalities \eqref{linef} and \eqref{lineg} hold when $t_k=1$ and $k$ is sufficiently large. For simplicity, the phrase ``$k$ is sufficiently large" will be implicit throughout the proof. 
	We divide the proof into two parts as follows.
	
	In the first part, we show that the step size $t_k=1$ can be accepted by  \eqref{linef}.    
	Since the retraction $R$ is a second-order retraction, using the second-order Taylor expansion of $F_{\bar{\rho}}$ around $x^k$
	(see, e.g., \cite[Sec. 5.10]{Boumal2023}) and combining it with Lemma \ref{le13}, we obtain   
	\begin{equation}\label{proof-th1-1}
		F_{\bar{\rho}}(R_{x^k}(\eta^k+\tilde{\eta}^k))=F_{\bar{\rho}}(x^k)+\langle{\rm grad}F_{\bar{\rho}}(x^k),\eta^k+\tilde{\eta}^k\rangle+\dfrac{1}{2}\langle\eta^k,{\rm Hess}F_{\bar{\rho}}(x^k)[\eta^k]\rangle+o(\|\eta^k\|^2).
	\end{equation}
	It follows from \eqref{s41}, Lemma \ref{le13}, and  Assumption \ref{A3} that 
	\begin{equation*}
		\begin{aligned}
			&\langle{\rm grad}F_{\bar{\rho}}(x^k),\eta^k\rangle=-\langle\eta^k,\mathcal{H}_k[\eta^k]\rangle-\sum_{i\in \mathcal{L}}\lambda^k_i\langle{\rm grad}c_i(x^k),\eta^k\rangle,\\
			&\langle{\rm grad}F_{\bar{\rho}}(x^k),\tilde{\eta}^k\rangle=-\sum_{i\in \mathcal{L}}\lambda^k_i\langle{\rm grad}c_i(x^k),\tilde{\eta}^k\rangle+o(\|\eta^k\|^2).
		\end{aligned}
	\end{equation*}
	Thus, we can rewrite \eqref{proof-th1-1} as follows:
	\begin{equation}\label{proof-th1-2}
		\begin{aligned}
			F_{\bar{\rho}}(R_{x^k}(\eta^k+\tilde{\eta}^k))&=F_{\bar{\rho}}(x^k)+\dfrac{1}{2}\langle{\rm grad}F_{\bar{\rho}}(x^k),\eta^k\rangle\\
			&+\dfrac{1}{2}\langle\eta^k,({\rm Hess}F_{\bar{\rho}}(x^k)-\mathcal{H}_k)[\eta^k]\rangle	-\dfrac{1}{2}\sum_{i\in \mathcal{L}}\lambda^k_i\langle{\rm grad}c_i(x^k),\eta^k\rangle\\
			&-\sum_{i\in \mathcal{L}}\lambda^k_i\langle{\rm grad}c_i(x^k),\tilde{\eta}^k\rangle+o(\|\eta^k\|^2).
		\end{aligned}
	\end{equation}
	
	For all $i\in \mathcal{L}(x^*)$, it follows that $\tilde{\eta}^k$ satisfies 
	\begin{equation}\label{proof-th1-3}
		c_i(R_{x^k}(\eta^k))+\langle{\rm grad}c_i(x^k),\tilde{\eta}^k\rangle=-\varpi_k.
	\end{equation}
	Expanding \eqref{proof-th1-3} and  using the fact that $\varpi_k=o(\|\eta^k\|^2)$ yield 
	\begin{equation}\label{proof-th1-4}
		c_i(x^k)+\langle{\rm grad}c_i(x^k),\eta^k\rangle+\dfrac{1}{2}\langle\eta^k,{\rm Hess}c_i(x^k)[\eta^k]\rangle+\langle{\rm grad}c_i(x^k),\tilde{\eta}^k\rangle=o(\|\eta^k\|^2).
	\end{equation}
	Multiplying \eqref{proof-th1-4} by $\lambda_i^k$, summing over $i\in \mathcal{L}(x^*)$, and noting that $\alpha_i^k\neq 0$, $\beta^k_i=-\delta^k_ic_i(x^k)$ and  \eqref{s42}, we obtain
	\begin{equation}\label{proof-th1-5}
		\begin{aligned}
			&-\dfrac{1}{2}\sum_{i\in \mathcal{L}(x^*)}\lambda^k_i\langle{\rm grad}c_i(x^k),\eta^k\rangle-\sum_{i\in \mathcal{L}(x^*)}\lambda^k_i\langle{\rm grad}c_i(x^k),\tilde{\eta}^k\rangle\\
			=&\sum_{i\in \mathcal{L}(x^*)}\left(1-\dfrac{\sqrt{2}\delta^k_i\lambda^k_i}{2\alpha^k_i}\right)\lambda^k_ic_i(x^k)\\
			&+\dfrac{1}{2}\sum_{i\in \mathcal{L}(x^*)}\lambda^k_i\langle\eta^k,{\rm Hess}c_i(x^k)[\eta^k]\rangle+o(\|\eta^k\|^2).
		\end{aligned}
	\end{equation}
	
	For $i\notin \mathcal{L}(x^*)$, if $\alpha_i^k=0$, then by \eqref{s42}, we know that $\lambda_i^k=0$. Now, we consider the case that $\alpha_i^k\neq 0$. It follows from \eqref{s42} and $\lambda_i^k\rightarrow 0$ as $k\to\infty$ that
	\begin{equation}\label{proof-th1-6}
		\lambda^k_i\langle{\rm grad}c_i(x^k),\eta^k\rangle=\dfrac{\sqrt{2}\beta_i^k(\lambda_i^k)^2}{\alpha_i^k}-\lambda^k_i\langle\eta^k,{\rm Hess}c_i(x^k)[\eta^k]\rangle+o(\|\eta^k\|^2).
	\end{equation}
	Summing over $i\in J^k_\alpha:=\{i\in \mathcal{L}\mid \alpha_i^k\neq 0, i\notin \mathcal{L}(x^*)\}$, and using $\beta^k_i=-\delta^k_ic_i(x^k)$, we get
	\begin{equation}\label{proof-th1-7}
		\begin{aligned}
			-\sum_{i\in J^k_\alpha}\lambda^k_i\langle{\rm grad}c_i(x^k),\eta^k\rangle&=\sum_{i\in J^k_\alpha}\dfrac{\sqrt{2}\delta^k_i(\lambda^k_i)^2}{\alpha_i^k}c_i(x^k)\\
			&+\sum_{i\in J^k_\alpha}\lambda^k_i\langle\eta^k,{\rm Hess}c_i(x^k)[\eta^k]\rangle+o(\|\eta^k\|^2).
		\end{aligned}
	\end{equation}
	On the other hand, it follows from \eqref{relationship20}, \eqref{relationship30}, \eqref{lambdak0}, \eqref{eta} and Lemma \ref{le13} that 
	\begin{equation}\label{proof-th1-8}
		\lambda^k_i\langle{\rm grad}c_i(x^k),\tilde{\eta}^k\rangle=o(\|\eta^k\|^2),\quad \forall i\notin \mathcal{L}(x^*).
	\end{equation}
	Then by \eqref{proof-th1-5}, \eqref{proof-th1-7},  \eqref{proof-th1-8} and  the fact that $\lambda_i^k=0$ whenever $i\notin \mathcal{L}(x^*)$ with $\alpha_i^k=0$, we obtain
	\begin{equation}\label{proof-th1-9}
		\begin{aligned}
			&-\dfrac{1}{2}\sum_{i\in \mathcal{L}}\lambda^k_i\langle{\rm grad}c_i(x^k),\eta^k\rangle-\sum_{i\in \mathcal{L}}\lambda^k_i\langle{\rm grad}c_i(x^k),\tilde{\eta}^k\rangle\\
			=&\sum_{i\in \mathcal{L}(x^*)}\left(1-\dfrac{\sqrt{2}\delta^k_i\lambda^k_i}{2\alpha^k_i}\right)\lambda^k_ic_i(x^k)+\sum_{i\in J^k_\alpha}\dfrac{\sqrt{2}\delta^k_i(\lambda^k_i)^2}{2\alpha_i^k}c_i(x^k)\\
			&+\dfrac{1}{2}\sum_{i\in \mathcal{L}}\lambda^k_i\langle\eta^k,{\rm Hess}c_i(x^k)[\eta^k]\rangle+o(\|\eta^k\|^2).
		\end{aligned}
	\end{equation}
	This together with \eqref{proof-th1-2} shows that
	\begin{equation}\label{proof-th1-10}
		\begin{aligned}
			F_{\bar{\rho}}(R_{x^k}(\eta^k+\tilde{\eta}^k))&=F_{\bar{\rho}}(x^k)+\dfrac{1}{2}\langle{\rm grad}F_{\bar{\rho}}(x^k),\eta^k\rangle\\
			&+\dfrac{1}{2}\left\langle\eta^k,\left({\rm Hess}F_{\bar{\rho}}(x^k)+\sum_{i\in \mathcal{L}}\lambda_i^k{\rm Hess}c_i(x^k)-\mathcal{H}_k\right)[\eta^k]\right\rangle\\
			&+\sum_{i\in \mathcal{L}(x^*)}\left(1-\dfrac{\sqrt{2}\delta^k_i\lambda^k_i}{2\alpha^k_i}\right)\lambda^k_ic_i(x^k)\\
			&+\sum_{i\in J^k_\alpha}\dfrac{\sqrt{2}\delta^k_i(\lambda^k_i)^2}{2\alpha_i^k}c_i(x^k)+o(\|\eta^k\|^2).
		\end{aligned}
	\end{equation}
	
	From Lemma \ref{le12}, we know that
	$$\sum_{i\in \mathcal{L}(x^*)}\left(1-\dfrac{\sqrt{2}\delta^k_i\lambda^k_i}{2\alpha^k_i}\right)\lambda^k_ic_i(x^k)\leq 0.$$
	And it is obvious that 
	$$\sum_{i\in J^k_\alpha}\dfrac{\sqrt{2}\delta^k_i(\lambda^k_i)^2}{2\alpha_i^k}c_i(x^k)\leq 0.$$
	Then, it follows from \eqref{proof-th1-10} that
	\begin{equation*}\label{proof-th1-11}
		\begin{aligned}
			F_{\bar{\rho}}(R_{x^k}(\eta^k+\tilde{\eta}^k))&\leq F_{\bar{\rho}}(x^k)+\dfrac{1}{2}\langle{\rm grad}F_{\bar{\rho}}(x^k),\eta^k\rangle\\
			&+\dfrac{1}{2}\left\langle\eta^k,\left({\rm Hess}F_{\bar{\rho}}(x^k)+\sum_{i\in \mathcal{L}}\lambda^k_i{\rm Hess}c_i(x^k)-\mathcal{H}_k\right)[\eta^k]\right\rangle+o(\|\eta^k\|^2)\\
			&\leq F_{\bar{\rho}}(x^k)+\dfrac{1}{2}\langle{\rm grad}F_{\bar{\rho}}(x^k),\eta^k\rangle\\
			&+\dfrac{1}{2}\|(\mathcal{H}_k-{\rm Hess}_{x^k}L_{\bar{\rho}}(x^k,\lambda^k))\eta^k\|\|\eta^k\|+o(\|\eta^k\|^2).
		\end{aligned}
	\end{equation*}
	Thus,  by the boundedness of $\{\eta^k\}$ and Assumption \ref{A8}, we have
	\begin{equation*}
		F_{\bar{\rho}}(R_{x^k}(\eta^k+\tilde{\eta}^k))\leq F_{\bar{\rho}}(x^k)+\dfrac{1}{2}\langle{\rm grad}F_{\bar{\rho}}(x^k),\eta^k\rangle+o(\|\eta^k\|^2).
	\end{equation*}
	This along with \eqref{dk01}, \eqref{eta}, Lemma \ref{le3} and Assumption \ref{A3} implies that
	\begin{equation*}
		F_{\bar{\rho}}(R_{x^k}(\eta^k+\tilde{\eta}^k))-F_{\bar{\rho}}(x^k)-\sigma\langle{\rm grad}F_{\bar{\rho}}(x^k),\eta^k\rangle\leq (\sigma-\dfrac{1}{2})\tau a_1\|\eta^{k0}\|^2+o(\|\eta^{k0}\|^2),
	\end{equation*}
	which completes the proof of the first part due to $\sigma\in(0,1/2)$. 
	
	For the second part, we show that the step size $t_k=1$ can be accepted by \eqref{lineg}. For $i\notin \mathcal{L}(x^*)$, by the continuity of $c_i(x)$ and Corollary \ref{co1} (i), it is clear that the step size $t_k=1$ can be accepted by such $c_i$. For $i\in \mathcal{L}(x^*)$, by expanding $c_i\circ R_{x^k}$ in the linear space $T_{x^k}\mathcal{M}$ around $\eta^k$, we have
	\begin{equation*}
		\begin{aligned}
			c_i(R_{x^k}(\eta^k+\tilde{\eta}^k))
			=&c_i\circ R_{x^k}(\eta^k)+\langle{\rm grad}(c_i\circ R_{x^k})(\eta^k),\tilde{\eta}^k\rangle+O(\|\tilde{\eta}^k\|^2),
		\end{aligned}
	\end{equation*}
	where ${\rm grad}(c_i\circ R_{x^k})(\eta^k)$ denotes the gradient of $c_i\circ R_{x^k}$ at $\eta^k$. Note that ${\rm grad}(c_i\circ R_{x^k})(0_{x^k})={\rm grad}c_i(x^k)$ (see \cite[Sec. 4.1]{absil2008optimization}). Since $c_i(x)$ is twice continuously differentiable and $R$ is smooth, 
	it follows from Lemma \ref{le13} that
	\begin{equation*}
		c_i(R_{x^k}(\eta^k+\tilde{\eta}^k))
		=c_i(R_{x^k}(\eta^k))+\langle{\rm grad}c_i(x^k),\tilde{\eta}^k\rangle+O(\|\eta^k\|\|\tilde{\eta}^k\|).
	\end{equation*}
	This together with \eqref{subpro} shows that
	$$c_i(R_{x^k}(\eta^k+\tilde{\eta}^k))=-\varpi_k+O(\|\eta^k\|\|\tilde{\eta}^k\|).$$
	Then, it follows from Lemma \ref{le13} and the definition of $\varpi_k$ that
	$$c_i(R_{x^k}(\eta^k+\tilde{\eta}^k))=-\varpi_k+o(\varpi_k)<0.$$
	Thus, the proof is completed.
\end{proof} 

For the purpose of applying the results in \cite{qi2000new}, we shall convert the iterative format to Euclidean space below for $k$ large enough.
From Theorem \ref{th1}, Lemma \ref{le13} and \eqref{eta}, we can conclude that 
$x^{k+1}=R_{x^k}(\eta^k+\tilde{\eta}^k)=R_{x^k}(\eta^k+o(\|\eta^k\|))=R_{x^k}(\eta^{k0}+o(\|\eta^{k0}\|))$ for $k$ large enough.
We further denote $H_k={\rm D}\varphi(x^k)\mathcal{H}_k{\rm D}\varphi^{-1}(\hat{x}^k)$ and $\hat{R}_{\hat{x}}(\hat{\xi})=\varphi(R_x{\xi})$. Thus $H_k$ is an $n\times n$ matrix, and $\hat{R}_{\hat{x}}$ is a mapping from $\mathbb{R}^n$ to $\mathbb{R}^n$ satisfying ${\rm D}\hat{R}_{\hat{x}}(0)={\rm id}$ (see, e.g., \cite[Thm. 6.3.2]{absil2008optimization}). 

Since $x^k\rightarrow x^*$ as $k\to\infty$, it follows that $x^k\in B(x^*,r_1)=\{x\in U\mid {\rm dist}(x,x^*)\leq r_1\}$ for all sufficiently large $k$ and some $r_1>0$. Thus $\{\|{\rm D}\varphi(x^k)\|_{\rm op}\}$ is bounded on $B(x^*,r_1)$ by the smoothness of $\varphi$. On the other hand, by the continuity of $G_{\hat{x}^k}$ and the fact that $G_{\hat{x}^*}=I_n$, we have
\begin{equation}\label{limeta}
	\lim_{k\rightarrow\infty}\dfrac{\|\eta^{k0}\|}{\|\hat{\eta}^{k0}\|}=1.
\end{equation}
This implies
$${\rm D}\varphi(x^k)[\eta^{k0}+o(\|\eta^{k0}\|)]=\hat{\eta}^{k0}+o(\|\hat{\eta}^{k0}\|),$$
and therefore
\begin{equation}\label{exk+11}
	\hat{x}^{k+1}=\hat{R}_{\hat{x}}(\hat{\eta}^{k0}+o(\|\hat{\eta}^{k0}\|)).
\end{equation}
%

The following lemma shows that Assumption \ref{A3} and \ref{A8} can be locally transformed into the Euclidean setting.

\begin{lemma}\label{le16}
	Suppose that Assumptions \ref{A3} and \ref{A8} hold. Then for all $k\in\mathbb{N}$ large enough, we have 
	\begin{description}
		\item[(i)] the matrix $G_{\hat{x}^k}H_k$ is symmetric, and there exist two constants $\bar{a}_2>\bar{a}_1>0$ such that $\bar{a}_1\|v\|^2\leq v^{\top}G_{\hat{x}^k}H_kv\leq\bar{a}_2\|v\|^2$ for any $v\in\mathbb{R}^n$.
		\item [(ii)] $\|(G_{\hat{x}^k}H_k-\nabla_{\hat{x}}\hat{L}(\hat{x}^*,\lambda^*))\hat{\eta}^{k0}\|=o(\|\hat{\eta}^{k0}\|)$.
	\end{description}
\end{lemma}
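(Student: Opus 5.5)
The plan is to pull everything back through the chart $\varphi$ using the identity \eqref{inner}, $\langle\xi,\zeta\rangle_{x}=\hat\xi^{\top}G_{\hat x}\hat\zeta$, where $\hat\xi={\rm D}\varphi(x)[\xi]$. With $H_k={\rm D}\varphi(x^k)\mathcal{H}_k{\rm D}\varphi^{-1}(\hat x^k)$ we have $\widehat{\mathcal{H}_k[\xi]}=H_k\hat\xi$. The claims then become statements about the matrices $G_{\hat x^k}H_k$, and the limiting objects are evaluated at $\hat x^*$, where $G_{\hat x^*}=I_n$.

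For part (i), fix $u,v\in\mathbb{R}^n$ and set $\xi={\rm D}\varphi^{-1}(\hat x^k)[u]$ and $\zeta={\rm D}\varphi^{-1}(\hat x^k)[v]$. By \eqref{inner},
$$u^{\top}G_{\hat x^k}H_kv=\langle\xi,\mathcal{H}_k[\zeta]\rangle_{x^k}.$$
Since $\mathcal{H}_k$ is self-adjoint, this equals $\langle\mathcal{H}_k[\xi],\zeta\rangle_{x^k}=v^{\top}G_{\hat x^k}H_ku$, so $G_{\hat x^k}H_k$ is symmetric. For the bounds, Assumption~\ref{A3} gives
$$a_1\|\xi\|^2\le v^{\top}G_{\hat x^k}H_kv\le a_2\|\xi\|^2 \quad\text{with } u=v,$$
and $\|\xi\|^2=v^{\top}G_{\hat x^k}v$. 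By continuity of $\hat x\mapsto G_{\hat x}$ and $G_{\hat x^*}=I_n$, for $k$ large (using $x^k\to x^*$ from Theorem~\ref{le11}) the eigenvalues of $G_{\hat x^k}$ lie in $[1/2,2]$. Hence we may take $\bar a_1=a_1/2$ and $\bar a_2=2a_2$.

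For part (ii), I read $\nabla_{\hat x}\hat L$ in the statement as the Euclidean Hessian $\nabla^2_{\hat x}\hat L_{\bar\rho}(\hat x^*,\lambda^*)$. I would split
$$G_{\hat x^k}H_k-\nabla^2\hat L(\hat x^*,\lambda^*)=\bigl(G_{\hat x^k}H_k-G_{\hat x^k}\widehat{{\rm Hess}}_k\bigr)+\bigl(G_{\hat x^k}\widehat{{\rm Hess}}_k-\nabla^2\hat L(\hat x^*,\lambda^*)\bigr),$$
where $\widehat{{\rm Hess}}_k={\rm D}\varphi(x^k){\rm Hess}_xL_{\bar\rho}(x^k,\lambda^k){\rm D}\varphi^{-1}(\hat x^k)$. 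Applied to $\hat\eta^{k0}$, the first term equals $G_{\hat x^k}$ times the coordinate image of $(\mathcal{H}_k-{\rm Hess}_xL_{\bar\rho}(x^k,\lambda^k))[\eta^{k0}]$. Assumption~\ref{A8} is stated for $\eta^k$, not $\eta^{k0}$. By \eqref{eta}, $\eta^k=\eta^{k0}+o(\|\eta^{k0}\|^2)$, and both $\mathcal{H}_k$ and the Hessian are uniformly bounded near $x^*$. Hence
$$\|(\mathcal{H}_k-{\rm Hess})[\eta^{k0}]\|\le o(\|\eta^k\|)+O(\|\eta^k-\eta^{k0}\|)=o(\|\eta^{k0}\|).$$
Uniform boundedness of ${\rm D}\varphi$, $G_{\hat x^k}$ and \eqref{limeta} then make this term $o(\|\hat\eta^{k0}\|)$.

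The second term is where I expect the difficulty: it only needs to tend to zero in operator norm, but this requires continuity of the coordinate representation of the Riemannian Hessian at $\hat x^*$. At $\hat x^*$, Lemma~\ref{le10} gives $\widehat{{\rm Hess}}_*=\nabla^2\hat L(\hat x^*,\lambda^*)$, and $G_{\hat x^*}=I_n$. For $\hat x^k\neq\hat x^*$, the coordinate Hessian equals $G^{-1}(\nabla^2\hat L-\Gamma\text{-terms involving }\nabla\hat L)$, which differs from $G^{-1}\nabla^2\hat L$ by Christoffel-symbol terms. These terms are continuous in $(\hat x,\lambda)$ and vanish at $\hat x^*$ in normal coordinates. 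Their product with $\nabla\hat L$ also tends to zero, since $\nabla_{\hat x}\hat L(\hat x^k,\lambda^k)\to 0$ by Corollary~\ref{co1}. So I would just invoke joint continuity of $(x,\lambda)\mapsto{\rm D}\varphi\,{\rm Hess}_xL_{\bar\rho}\,{\rm D}\varphi^{-1}$ in the chart, which holds because the $c_i$ are $C^2$ and $\lambda^k\to\lambda^*$ by Corollary~\ref{co1}. This yields $G_{\hat x^k}\widehat{{\rm Hess}}_k\to\nabla^2\hat L(\hat x^*,\lambda^*)$, so the second term applied to $\hat\eta^{k0}$ is $o(\|\hat\eta^{k0}\|)$, completing (ii).
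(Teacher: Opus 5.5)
Your proposal is correct and follows essentially the same route as the paper. Part (i) goes through the identity \eqref{inner}, self-adjointness of $\mathcal{H}_k$ and Assumption~\ref{A3}. Part (ii) transfers Assumption~\ref{A8} from $\eta^k$ to $\eta^{k0}$ via \eqref{eta}, then uses a triangle-inequality split, $G_{\hat{x}^k}\to I_n$, and Lemma~\ref{le10}. The differences are cosmetic: you bound $v^{\top}G_{\hat{x}^k}v$ through eigenvalues near $1$ instead of through $\|{\rm D}\varphi(x^k)\|_{\rm op}$ and $\|{\rm D}\varphi^{-1}(\hat{x}^k)\|_{\rm op}$, and you use a two-term rather than three-term split. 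You also make explicit the continuity of the coordinate Hessian at $\hat{x}^*$, including the vanishing Christoffel terms, which the paper uses without comment.
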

\begin{proof}
	(i) For any $v_1,v_2\in\mathbb{R}^n$, we have $v_1^{\top}H_k^{\top}G_{\hat{x}^k}v_2=v_1^{\top}G_{\hat{x}^k}H_kv_2$. This implies that $G_{\hat{x}^k}H_k$ is symmetric. In addition, for any $v\in\mathbb{R}^n$, we have 
	$$v^{\top}G_{\hat{x}^k}H_kv=\left\langle{\rm D}\varphi^{-1}(\hat{x}^k)[v],\mathcal{H}_k[{\rm D}\varphi^{-1}(\hat{x}^k)[v]]\right\rangle,$$ 
	for all $k$ large enough. Then by Assumption \ref{A3}, we have
	$$a_1\|{\rm D}\varphi^{-1}(\hat{x}^k)[v]\|^2\leq v^{\top}G_{\hat{x}^k}H_kv\leq a_2\|{\rm D}\varphi^{-1}(\hat{x}^k)[v]\|^2.$$
	We have already obtained the boundedness of $\{\|{\rm D}\varphi(x^k)\|_{\rm op}\}$ on $B(x^*,r_1)$. Similarly, we can also obtain that the sequence $\{\|{\rm D}\varphi^{-1}(\hat{x}^k)\|_{\rm op}\}$ (with $k$ being large enough) is bounded. Thus, there exist two constants $M_1>0$ and $M_2>0$ such that $\|{\rm D}\varphi(x^k)\|_{\rm op}\leq M_1$ and $\|{\rm D}\varphi^{-1}(\hat{x}^k)\|_{\rm op}\leq M_2$ for all $k$ large enough. Then we have 
	\begin{equation*}
		\|v\|=\|{\rm D}\varphi(x^k)\circ {\rm D}\varphi^{-1}(\hat{x}^k)[v]\|\leq\|{\rm D}\varphi(x^k)\|_{\rm op}\|{\rm D}\varphi^{-1}(\hat{x}^k)[v]\|\leq M_1\|{\rm D}\varphi^{-1}(\hat{x}^k)[v]\|.
	\end{equation*}
	By setting $\bar{a}_1=a_1/M^2_1$ and $\bar{a}_2=a_2 M^2_2$, we obtain part (i).
	
	(ii) It follows from \eqref{eta} and Assumption \ref{A8} that
	\begin{equation}\label{proof-le14-1}
		\|(\mathcal{H}_k-{\rm Hess}_{x}L(x^k,\lambda^k))[\eta^{k0}]\|=o(\|\eta^{k0}\|).
	\end{equation}
	On the other hand, we have
	\begin{equation}\label{proof-le14-2}
		\begin{aligned}
			&\dfrac{\|(G_{\hat{x}^k}H_k-\nabla_{\hat{x}}\hat{L}(\hat{x}^*,\lambda^*))[\hat{\eta}^{k0}]\|}{\|\hat{\eta}^{k0}\|}\\
			\leq&\dfrac{\|(H_k-{\rm D}\varphi(x^k){\rm Hess}_{x}L(x^k,\lambda^k){\rm D}\varphi^{-1}(\hat{x}^k))[\hat{\eta}^{k0}]\|}{\|\hat{\eta}^{k0}\|}\\
			+&\dfrac{\|({\rm D}\varphi(x^k){\rm Hess}_{x}L(x^k,\lambda^k){\rm D}\varphi^{-1}(\hat{x}^k)-\nabla_{\hat{x}}\hat{L}(\hat{x}^*,\lambda^*))[\hat{\eta}^{k0}]\|}{\|\hat{\eta}^{k0}\|}\\
			+&\dfrac{\|(G_{\hat{x}^k}H_k-H_k)[\hat{\eta}^{k0}]\|}{\|\hat{\eta}^{k0}\|}\\
			\leq&\dfrac{\|{\rm D}\varphi(x^k)\|_{\rm op}\|{\eta}^{k0}\|\|(\mathcal{H}_k-{\rm Hess}_{x}L(x^k,\lambda^k)({x}^k))[\hat{\eta}^{k0}]\|}{\|\hat{\eta}^{k0}\|\|\eta^{k0}\|}\\
			+&\|{\rm D}\varphi(x^k){\rm Hess}_{x}L(x^k,\lambda^k){\rm D}\varphi^{-1}(\hat{x}^k)-\nabla_{\hat{x}}\hat{L}(\hat{x}^*,\lambda^*)\|\\
			+&\|G_{\hat{x}^k}H_k-H_k\|.
		\end{aligned}
	\end{equation}
	Since $\{\|{\rm D}\varphi(x^k)\|_{\rm op}\}$ is bounded, we have $G_{\hat{x}^k}\rightarrow I_n$. 
	This together with part (i) of this lemma, \eqref{limeta}, \eqref{proof-le14-1}, \eqref{proof-le14-2} and Lemma \ref{le10} shows part (ii).
\end{proof}

Finally, we analyze the local convergence rate of Algorithm \ref{algo1}.
It follows from \eqref{s1} and $\delta_i^k=-\beta_i^k/c_i(x^k)$ that 
\begin{subequations}\label{s5}
	\begin{align}
		\label{s51}
		&\mathcal{H}_k[\eta^{k0}]+\sum_{i\in \mathcal{L}}\lambda_i^{k0}{\rm grad}c_i(x^k)=-{\rm grad}F_{\bar{\rho}}(x^k),\\
		\label{s52}
		&\alpha_i^k\langle{\rm grad}c_i(x^k),\eta^{k0}\rangle+\sqrt{2}\lambda_i^{k0}\delta^k_ic_i(x^k)=0,\quad i\in \mathcal{L}.
	\end{align}
\end{subequations}
Then by \eqref{lambdak0} and \eqref{s51}, we have
\begin{equation}\label{dk011}
	\mathcal{H}_k[\eta^{k0}]+\sum_{i\in \mathcal{L}(x^*)}\lambda_i^{k0}{\rm grad}c_i(x^k)=-{\rm grad}F_{\bar{\rho}}(x^k)+o(\|\eta^{k0}\|).
\end{equation}
On the other hand, for $i\in \mathcal{L}(x^*)$, it follows from \eqref{relationship20}, \eqref{relationship30} and Lemma \ref{le12} that $\sqrt{2}\lambda_i^{k0}\delta^k_i\rightarrow 1$ as $k\to\infty$. Therefore, for $k$ large enough, we have $\sqrt{2}\lambda_i^{k0}\delta^k_i\neq 0$. This together with \eqref{s52} implies
$$\dfrac{\alpha_i^k}{\sqrt{2}\lambda_i^{k0}\delta^k_i}\langle{\rm grad}c_i(x^k),\eta^{k0}\rangle+c_i(x^k)=0,\,\,\forall i\in \mathcal{L}(x^*).$$
Note that $\dfrac{\alpha_i^k}{\sqrt{2}\lambda_i^{k0}\delta^k_i}\rightarrow 1$ as $k\to\infty$ by Lemma \ref{le12}. This leads to the conclusion that
\begin{equation}\label{dk012}
	\langle{\rm grad}c_i(x^k),\eta^{k0}\rangle+c_i(x^k)=o(\|\eta^{k0}\|), \ 
	\,\,\forall i\in \mathcal{L}(x^*).
\end{equation}

It follows from \eqref{limeta}, \eqref{dk011}, \eqref{dk012}, $\widehat{{\rm grad}c_i}(\hat{x}^k)=G_{x^k}^{-1}\nabla\hat{c}_i(\hat{x}^k)$ and $\widehat{{\rm grad}F_{\bar{\rho}}}(\hat{x}^k)=G_{x^k}^{-1}\nabla\hat{F}_{\bar{\rho}}(\hat{x}^k)$ that
\begin{equation*}
	\begin{aligned}
		&G_{\hat{x}^k}{H}_k\hat{\eta}^{k0}+\sum_{i\in L(x^*)}\lambda_i^{k0}\nabla\hat{c}_i(\hat{x}^k)=-\nabla\hat{F}_{\bar{\rho}}(\hat{x}^k)+o(\|\hat{\eta}^{k0}\|),\\
		&\langle\nabla\hat{c}_i(\hat{x}^k),\hat{\eta}^{k0}\rangle+\hat{c}_i(\hat{x}^k)=o(\|\hat{\eta}^{k0}\|),\quad i\in \mathcal{L}(x^*).
	\end{aligned}
\end{equation*}
That is
\begin{equation*}       
	\left(               
	\begin{array}{cc}   
		G_{\hat{x}^k}H_k & N_k\\
		N_k^{\top} & 0 \\ 
	\end{array}
	\right)
	\left(\begin{array}{c}   
		\hat{\eta}^{k0}\\ 
		\lambda^{k0}_{\mathcal{L}(x^*)}\\ 
	\end{array}\right) 
	= \left(\begin{array}{c}   
		-\nabla\hat{F}_{\bar{\rho}}(\hat{x}^k)\\ 
		-\hat{c}_{\mathcal{L}(x^*)}(\hat{x}^k)\\ 
	\end{array}\right)+ o(\|\hat{\eta}^{k0}\|),  
\end{equation*}
where $N_k=[\nabla\hat{c}_i(\hat{x}^k),i\in \mathcal{L}(x^*)]$, $\lambda^{k0}_{\mathcal{L}(x^*)}=(\lambda^{k0}_i,i\in \mathcal{L}(x^*))^{\top}$ and $\hat{c}_{\mathcal{L}(x^*)}=(\hat{c}_i,i\in \mathcal{L}(\hat{x}^*)=\mathcal{L}(x^*))^{\top}$.

From the above analysis, we can obtain the following lemma, whose proof is similar to that of \cite[Thm. 4.9]{qi2000new}.
\begin{lemma}\label{le14}
	Suppose that Assumptions \ref{A1}-\ref{A8} hold. Then we have
	$$\|\hat{x}^k+\hat{\eta}^{k0}+o(\|\hat{\eta}^{k0}\|)-\hat{x}^*\|=o\|\hat{x}^k-\hat{x}^*\|.$$
\end{lemma}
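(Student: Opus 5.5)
The plan is to treat the perturbed linear system displayed just before the lemma as an inexact Newton step for the equality-constrained KKT system of \eqref{Epro} restricted to the active set $\mathcal{L}(x^*)$. Then we invoke the Dennis--Moré-type argument used in \cite[Thm. 4.9]{qi2000new}. Set $w^k=(\hat{x}^k,\lambda^{k0}_{\mathcal{L}(x^*)})$ and $w^*=(\hat{x}^*,\lambda^*_{\mathcal{L}(x^*)})$. Define
\begin{equation*}
\Psi(\hat{x},\lambda)=\left(\nabla\hat{F}_{\bar{\rho}}(\hat{x})+N(\hat{x})\lambda,\ \hat{c}_{\mathcal{L}(x^*)}(\hat{x})\right),\qquad N(\hat{x})=[\nabla\hat{c}_i(\hat{x}),\,i\in\mathcal{L}(x^*)].
\end{equation*}
By Lemma \ref{le9.5} and strict complementarity (Lemma \ref{pro3}(ii)), $\lambda^*_i=0$ for $i\notin\mathcal{L}(x^*)$, so $\Psi(w^*)=0$. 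Its Jacobian at $w^*$ is
\begin{equation*}
J_*=\begin{pmatrix}\nabla^2_{\hat{x}}\hat{L}_{\bar{\rho}}(\hat{x}^*,\lambda^*)& N_*\\ N_*^{\top}&0\end{pmatrix},
\end{equation*}
which is nonsingular by LICQ (Lemma \ref{pro3}(i)) and SOSC on the null space of $N_*^\top$ (Lemma \ref{pro3}(iii), where the critical cone reduces to this null space by strict complementarity).

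First I rewrite the perturbed system as
\begin{equation*}
\begin{pmatrix}G_{\hat{x}^k}H_k&N_k\\ N_k^\top&0\end{pmatrix}\begin{pmatrix}\hat{\eta}^{k0}\\ \lambda^{k0}_{\mathcal{L}(x^*)}\end{pmatrix}=-\begin{pmatrix}\nabla\hat{F}_{\bar{\rho}}(\hat{x}^k)\\ \hat{c}_{\mathcal{L}(x^*)}(\hat{x}^k)\end{pmatrix}+o(\|\hat{\eta}^{k0}\|).
\end{equation*}
Subtracting $N_k\lambda^*$ from both sides and using Lemma \ref{le16}(ii) gives
\begin{equation*}
\nabla^2\hat{L}_*\hat{\eta}^{k0}+N_k(\lambda^{k0}-\lambda^*)=-\nabla_{\hat{x}}\hat{L}_{\bar{\rho}}(\hat{x}^k,\lambda^*)+o(\|\hat{\eta}^{k0}\|),
\end{equation*}
together with $N_k^\top\hat{\eta}^{k0}=-\hat{c}_{\mathcal{L}(x^*)}(\hat{x}^k)+o(\|\hat{\eta}^{k0}\|)$. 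Taylor expansion around $\hat{x}^*$ gives $\nabla_{\hat{x}}\hat{L}_{\bar{\rho}}(\hat{x}^k,\lambda^*)=\nabla^2\hat{L}_*(\hat{x}^k-\hat{x}^*)+o(\|\hat{x}^k-\hat{x}^*\|)$ and $\hat{c}_{\mathcal{L}(x^*)}(\hat{x}^k)=N_*^\top(\hat{x}^k-\hat{x}^*)+o(\|\hat{x}^k-\hat{x}^*\|)$. Also $N_k-N_*=O(\|\hat{x}^k-\hat{x}^*\|)$, and $\lambda^{k0}\to\lambda^*$ by Theorem \ref{le11}, so the replacement of $N_k$ by $N_*$ costs only $o(\|\hat{x}^k-\hat{x}^*\|)$. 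Collecting terms, with $e^k=\hat{x}^k+\hat{\eta}^{k0}-\hat{x}^*$, yields
\begin{equation*}
J_*\begin{pmatrix}e^k\\ \lambda^{k0}_{\mathcal{L}(x^*)}-\lambda^*_{\mathcal{L}(x^*)}\end{pmatrix}=o(\|\hat{x}^k-\hat{x}^*\|)+o(\|\hat{\eta}^{k0}\|).
\end{equation*}
Inverting $J_*$ then gives $\|e^k\|=o(\|\hat{x}^k-\hat{x}^*\|)+o(\|\hat{\eta}^{k0}\|)$.

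To close the argument I need $\|\hat{\eta}^{k0}\|=O(\|\hat{x}^k-\hat{x}^*\|)$. This follows from the triangle inequality: $\|\hat{\eta}^{k0}\|\le\|e^k\|+\|\hat{x}^k-\hat{x}^*\|$, so $\|\hat{\eta}^{k0}\|\le o(\|\hat{\eta}^{k0}\|)+O(\|\hat{x}^k-\hat{x}^*\|)$, and absorbing the small term gives the bound. Adding the extra $o(\|\hat{\eta}^{k0}\|)$ in the statement then gives the claim.

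The main obstacle is the mismatch in Lemma \ref{le16}(ii) and Assumption \ref{A8}. The Dennis--Moré condition there is stated along $\hat{\eta}^{k0}$ with the Hessian at $(\hat{x}^*,\lambda^*)$, whereas the error term multiplies $\hat{x}^k-\hat{x}^*$. The fix is to keep $G_{\hat{x}^k}H_k$ acting only on $\hat{\eta}^{k0}$ and to use the exact limit Hessian only in the Taylor expansion of $\nabla\hat{L}$. Care is also needed with the inactive multipliers: by \eqref{lambdak0} they are $o(\|\eta^{k0}\|)$, and this has already been absorbed in \eqref{dk011}.
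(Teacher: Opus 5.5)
Your proposal is correct and is essentially the argument the paper defers to via \cite[Thm.~4.9]{qi2000new}. You read the perturbed active-set KKT system as an inexact Newton step, apply Lemma~\ref{le16}(ii) only to $\hat{\eta}^{k0}$, Taylor-expand $\nabla_{\hat{x}}\hat{L}_{\bar{\rho}}(\cdot,\lambda^*)$ and $\hat{c}_{\mathcal{L}(x^*)}$ about $\hat{x}^*$, and invert the nonsingular KKT matrix $J_*$ that Lemma~\ref{pro3} guarantees; your absorption step giving $\|\hat{\eta}^{k0}\|=O(\|\hat{x}^k-\hat{x}^*\|)$ also supplies the estimate \eqref{proof-th7-1}, which the paper separately imports from Qi and Qi's Lemma 4.8.
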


At the end of this section, we establish the superlinear convergence result for Algorithm  \ref{algo1}.
\begin{theorem}
	Suppose that Assumptions \ref{A1}-\ref{A8} hold. Then the sequence $\{x^k\}$ converges Q-superlinearly, that is
	$$\|\hat{x}^{k+1}-\hat{x}^*\|=o(\|\hat{x}^k-\hat{x}^*\|).$$
\end{theorem}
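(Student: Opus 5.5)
The plan is to combine the representation \eqref{exk+11} of the update in normal coordinates with Lemma \ref{le14}, and then to control the discrepancy between the retraction step $\hat{R}_{\hat{x}^k}(\hat{\xi})$ and the additive step $\hat{x}^k+\hat{\xi}$. Fix $k$ large enough that Theorem \ref{th1} applies, so that $t_k=1$, and that $x^k,x^{k+1}\in B(x^*,r_1)\subset\mathcal{U}$; this is possible by Theorem \ref{le11}. Write $\hat{\xi}^k:=\hat{\eta}^{k0}+o(\|\hat{\eta}^{k0}\|)$, so that \eqref{exk+11} gives $\hat{x}^{k+1}=\hat{R}_{\hat{x}^k}(\hat{\xi}^k)$. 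The splitting I will use is
$$\|\hat{x}^{k+1}-\hat{x}^*\|\leq\|\hat{x}^k+\hat{\xi}^k-\hat{x}^*\|+\|\hat{R}_{\hat{x}^k}(\hat{\xi}^k)-\hat{x}^k-\hat{\xi}^k\|.$$
Lemma \ref{le14} bounds the first term by $o(\|\hat{x}^k-\hat{x}^*\|)$.

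For the second term, I will use that $\hat{R}$ is smooth (indeed $C^2$) on a neighborhood of $(\hat{x}^*,0)$ and that ${\rm D}\hat{R}_{\hat{x}}(0)={\rm id}$ together with $\hat{R}_{\hat{x}}(0)=\hat{x}$. A second-order Taylor expansion with a remainder constant that is uniform for $\hat{x}$ in a compact neighborhood of $\hat{x}^*$ then gives
$$\|\hat{R}_{\hat{x}^k}(\hat{\xi}^k)-\hat{x}^k-\hat{\xi}^k\|\leq M\|\hat{\xi}^k\|^2\leq 2M\|\hat{\eta}^{k0}\|^2.$$

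It therefore remains to show that $\|\hat{\eta}^{k0}\|=O(\|\hat{x}^k-\hat{x}^*\|)$. This follows from Lemma \ref{le14} and the triangle inequality:
$$\|\hat{\xi}^k\|\leq\|\hat{x}^k+\hat{\xi}^k-\hat{x}^*\|+\|\hat{x}^k-\hat{x}^*\|=(1+o(1))\|\hat{x}^k-\hat{x}^*\|.$$
Since $\|\hat{\xi}^k\|=(1+o(1))\|\hat{\eta}^{k0}\|$, we get $\|\hat{\eta}^{k0}\|\leq 2\|\hat{x}^k-\hat{x}^*\|$ for large $k$. Combined with $\hat{\eta}^{k0}\to0$ (from Corollary \ref{co1} and \eqref{limeta}), this gives $\|\hat{\eta}^{k0}\|^2=o(\|\hat{x}^k-\hat{x}^*\|)$, so the second term is also $o(\|\hat{x}^k-\hat{x}^*\|)$, which yields the claim.

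The main obstacle is the uniformity of the Taylor remainder for $\hat{R}_{\hat{x}}$ as the base point $\hat{x}^k$ moves. I would handle it by viewing $(\hat{x},\hat{\xi})\mapsto\hat{R}_{\hat{x}}(\hat{\xi})$ as a single smooth map on a neighborhood of $(\hat{x}^*,0)$ in $\mathbb{R}^{2n}$. This map is well defined there because $R$ is smooth on $T\mathcal{M}$ and $\varphi$ is a chart, and $R_{x^k}(\xi)\in\mathcal{U}$ for small $\xi$ and $x^k$ near $x^*$. One then bounds its second derivative in $\hat{\xi}$ on a compact set. A subtlety is that \eqref{exk+11} identifies $\hat{\xi}^k$ as ${\rm D}\varphi(x^k)$ applied to the tangent step; one must check that the coordinate representation of the retraction is consistent with this, i.e.\ that $\hat{R}_{\hat{x}}$ is defined on coordinate vectors via ${\rm D}\varphi^{-1}$. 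This is exactly the identification made before \eqref{exk+11}, so I take it as given.
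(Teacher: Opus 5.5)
Your proposal is correct and follows essentially the paper's route. You use the same triangle-inequality split into the Lemma~\ref{le14} term plus the retraction discrepancy $\|\hat{R}_{\hat{x}^k}(\hat{\xi}^k)-\hat{x}^k-\hat{\xi}^k\|\le M\|\hat{\xi}^k\|^2$; the paper cites this bound from \cite[Thm.~6.3.2]{absil2008optimization}, while you prove it by a uniform Taylor bound for the smooth map $(\hat{x},\hat{\xi})\mapsto\hat{R}_{\hat{x}}(\hat{\xi})$, and where the paper imports $\|\hat{\xi}^k\|^2=o(\|\hat{x}^k-\hat{x}^*\|)$ from Lemma~4.8 and (4.9) of \cite{qi2000new}, you derive it in one line from Lemma~\ref{le14} by the triangle inequality, which is slightly more self-contained.
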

\begin{proof}
	Similar to Lemma 4.8 and equation (4.9) of \cite{qi2000new}, we have
	\begin{equation}\label{proof-th7-1}
		\|\hat{\eta}^{k0}+o(\|\hat{\eta}^{k0}\|)\|^2=o(\|\hat{x}^k-\hat{x}^*\|).
	\end{equation}
	In addition, from \cite[Thm.6.3.2]{absil2008optimization}, we obtain
	\begin{equation}\label{proof-th7-2}
		\|\hat{R}_{\hat{x}}(\hat{\eta}^{k0}+o(\|\hat{\eta}^{k0}\|))-(\hat{x}^k+\hat{\eta}^{k0}+o(\|\hat{\eta}^{k0})\|\leq c\|\hat{\eta}^{k0}+o(\|\hat{\eta}^{k0}\|)\|^2,
	\end{equation}
	where $c>0$ is a constant. This together with \eqref{proof-th7-2} and Lemma \ref{le14} shows that
	\begin{equation*}
		\begin{aligned}
			\|\hat{x}^{k+1}-\hat{x}^*\|\leq&\|\hat{R}_{\hat{x}}(\hat{\eta}^{k0}+o(\|\hat{\eta}^{k0}\|))-(\hat{x}^k	+\hat{\eta}^{k0}+o(\|\hat{\eta}^{k0})\|\\
			&+\|\hat{x}^k+\hat{\eta}^{k0}+o(\|\hat{\eta}^{k0}\|)-\hat{x}^*\|\\
			\leq& c\|\hat{\eta}^{k0}+o(\|\hat{\eta}^{k0}\|)\|^2+o(\|\hat{x}^k-\hat{x}^*\|).
		\end{aligned}
	\end{equation*}
	Therefore, by combining with \eqref{proof-th7-1}, we complete the proof.
\end{proof}

\section{Numerical experiments} \label{Sec:numerical}

The proposed algorithm is compared with state-of-the-art algorithms on problems from the nonnegative low-rank matrix completion in~\cite{obara2022sequential} and the nonnegative principal component analysis in~\cite{jiang2023exact}.
All experiments are implemented in MATLAB R2021a using Manopt 8.0 \cite{boumal2014manopt}, on a laptop equipped with an Intel(R) Core(TM) i9-14900HX CPU @2.20GHz and 16GB of memory. 

\subsection{Problem setting}

\textbf{Nonnegative low-rank matrix completion.} Recovering a matrix from a small number of observed entries constitutes a low-rank matrix completion problem (see e.g., \cite{guglielmi2020efficient,vandereycken2013low}). Let $\mathcal{M}_r=\{X\in\mathbb{R}^{d\times s}\mid {\rm rank}(X)=r\}$ be a fixed-rank manifold and $\mathcal{N}$ be a subset of $\{1,\cdots,d\}\times \{1,\cdots,s\}$. The nonnegative low-rank matrix completion problem in~\cite{obara2022sequential}
is given by 
\begin{equation}\label{low-rank}
	\begin{aligned}
		&\min_{X\in\mathcal{M}_r} \dfrac{1}{2}\|P_{\mathcal{J}\setminus \mathcal{G}}(X-A)\|^2_{\rm F}\\
		&{\rm \quad\,\,s.t.}\,\,\, X_{ij}\geq 0 \quad\quad{\rm for\,\, all}\,\,(i,j)\in \mathcal{N}\setminus \mathcal{J},\\
		&\quad\quad\,\,\,\,\,\,\,X_{ij}= A_{ij}\quad {\rm for\,\, all}\,\,(i,j)\in \mathcal{G},
	\end{aligned}
\end{equation}
where $\mathcal{G}\subseteq \mathcal{J}\subseteq \mathcal{N}$ and $A\in\mathbb{R}^{d\times s}$ is a matrix whose entries indexed by $\mathcal{J}$ are known a priori. For a set $\mathcal{S}\subseteq \mathcal{N}$ and a matrix $C\in \mathbb{R}^{d\times s}$, $P_{\mathcal{S}}(C)$ denotes a matrix whose $(i,j)$-th entry is $C_{ij}$ if $(i,j)\in \mathcal{S}$ and 0 otherwise. {We follow the same approach as \cite{obara2022sequential} to construct the matrix $A\in\mathbb{R}^{d\times s}$. Specifically, We repeatedly draw random matrices $T\in\mathbb{R}^{d\times r}$ and $V\in\mathbb{R}^{r\times s}$ with i.i.d. entries uniformly distributed on $(0,1)$ until $\operatorname{rank}(TV)=r$, and then set $A=TV$. The sets $\mathcal{N}$, $\mathcal{J}$ and $\mathcal{G}$ are randomly generated using the \texttt{randperm} function, with $\lvert \mathcal{N}\rvert=\lceil0.8sd\rceil$, $\lvert \mathcal{J}\rvert=\lceil \lvert \mathcal{N}\rvert/4\rceil$ and $\lvert \mathcal{G}\rvert=\lceil \lvert \mathcal{J}\rvert/4\rceil$, where $\lceil\cdot\rceil$ denotes the ceiling function. }

\textbf{Nonnegative principal component analysis (PCA).} In~\cite{jiang2023exact}, the nonnegative PCA is formulated as a Riemannian constrained optimization problem 
\begin{equation}\label{non-pca}
	\begin{aligned}
		&\min_{X\in {\rm Ob}(d,s)} {\rm Tr}(X^{\top}CX)+\Upsilon (\zeta_q(X)+\chi)^p\\
		&{\rm \qquad\,\,s.t.}\,\,\, X_{ij}\geq 0\quad{\rm for\,\, all}\,\, (i,j)\in \mathcal{N},
	\end{aligned}
\end{equation}
where ${\rm Ob}(d,s)=\{X\in\mathbb{R}^{d\times s}\mid {\rm diag}(X^{\top}X)=I_s\}$ denote the oblique manifold, $C=-AA^{\top}$ with $A\in \mathbb{R}^{d\times l}$; $\zeta_q(X)=\|XV\|^q_{\rm F}-1$ with $V\in \mathbb{R}^{s\times r}$ and $r$ being arbitrary as long as $\|V\|_{\rm F}=1$ and $VV^{\top}$ is entrywise positive; $\Upsilon>0$ is the penalty parameter; and $p,q>0$ and $\chi\geq 0$ are the model parameters.
In this paper, we set $l=s$, $r=1$, $p=1$, $q=2$, and $\chi=0$, $\Upsilon=0.5$, and $V=(1/\sqrt{s}) e\in \mathbb{R}^s$, where $e$ denotes a vector with all entries being one. {We generate $A$ with i.i.d. entries uniformly distributed on $(0,1)$.}


\subsection{Implementations}

Algorithm~\ref{algo1} (RQO-free) is compared with the Riemannian augmented Lagrangian method in \cite{liu2020simple} (RALM), the Riemannian exact penalty method~\cite{liu2020simple} with smoothing functions being linear-quadratic and pseudo-Huber (REPM-LQH) and being log-sum exp (REPM-LSE), and the Riemannian sequential quadratic optimization in~\cite{obara2022sequential} (RSQO). The implementations of the benchmark methods RALM, REPMs, and RSQP are adopted from~\cite{obara2022sequential}. All source code is available at \url{https://github.com/haohe2904/RQO-free.git}.








The KKT residual in~\cite{obara2022sequential} for Riemannian constrained optimization is defined by
{\fontsize{8pt}{10pt}\selectfont
	$$\sqrt{\|{\rm grad} L(x,\lambda)\|^2+\sum_{i\in\mathcal{I}}\left(\max(0,-\lambda_i)^2+\max(0,c_i(x))^2+(\lambda_ic_i(x))^2\right)+\sum_{i\in\mathcal{E}}\lvert c_i(x)\rvert^2}+\iota(x),$$}
where $L$ denotes the Riemannian Lagrangian function of problem \eqref{Rpro}, $\lambda\in\mathbb{R}^{m+\ell}$ is a Lagrangian multiplier vector,and $\iota$ is defined by
\begin{align*}
	\hbox{Oblique manifold:} \qquad \iota(x)=&\|{\rm diag}(x^{\top}x)-I_s\|_{\rm F}, \\
	\hbox{Fixed-rank manifold:} \qquad \iota(x) =& 
	\begin{cases}
		0 & \hbox{if $x$ is on the manifold;} \\
		\infty & \hbox{ otherwise. }
	\end{cases}
\end{align*}
The KKT residual is used later to evaluate the quality of the compared methods.


	For the subproblems in the RQO-free method, we use the technique in~\cite{huang2017intrinsic} to transform the linear systems \eqref{s1}, \eqref{s2}, \eqref{s3}, and the subproblem \eqref{subpro} into their Euclidean forms. The Euclidean linear systems of \eqref{s1}, \eqref{s2}, and \eqref{s3} are solved by using \textbf{the LU} decomposition and the Euclidean form of the subproblem~\eqref{subpro} is solved by invoking \texttt{quadprog} (a MATLAB solver for quadratic optimization problems). Note that the subproblem~\eqref{subpro} is used to avoid the Maratos effect only when the KKT residual is smaller than $10^{-5}$.

All tested algorithms terminate if the number of iterations reaches $N_{\max}$ or the computational time exceeds $T_{\max}$, where $N_{\max}$ and $T_{\max}$ are given later. The RALM and REPMs further terminate if the algorithms do not update any parameters, i.e., they keep producing the same point from that point onward. The default parameters of RALM, REPMs, and RSQP are used. For the RQO-free method, the parameters are set as $\nu=2.3$, $\tau=0.75$, $\varrho=2.4$, $\kappa=0.55$, $\sigma=0.45$, $\varsigma=0.5$, $\tilde{\rho}=1.5$, $\rho^0=2$, $r_1=r_2=r_3=0.5$, $\bar{\mu}=50$, and $\mu_i^0=0.1$ for all $i\in \mathcal{L}$.



For Problem \eqref{low-rank}, since a randomly generated point is often not strictly feasible, we follow \cite{obara2022sequential} to compute a candidate $X^c$: solve $\min_{X\in\mathcal{M}_r} \frac12\|P_{\mathcal{J}\setminus \mathcal{G}}(X-\frac12 A)\|_{\rm F}^2$ subject to $X_{ij}\geq0$ for $(i,j)\in\mathcal{N}\setminus\mathcal{J}$ and $X_{ij}=\frac12 A_{ij}$ for $(i,j)\in\mathcal{G}$ using REPM-LQH until the KKT residual falls below $10^{-2}$. If $X^c$ is strictly feasible, set $X^0=X^c$; otherwise, repeat. We use $\frac12 A_{ij}$ instead of $A_{ij}$ because, given $A_{ij}>0$, if $X_{ij}\approx\frac12 A_{ij}$, then $X_{ij}<A_{ij}$ is likely to hold, making it easier to obtain a strictly feasible point. 
For Problem \eqref{non-pca}, we similarly obtain $X^c$ via REPM-LQH but terminate at KKT residual $10^{-1}$. If $X^c$ is feasible, set $X^0=X^c$; otherwise set $X^0=\operatorname{abs}(X^c)$, where $\operatorname{abs}(\cdot)$ denotes the entrywise absolute value of a matrix.

\subsection{Results analysis}

{For nonnegative low-rank matrix
	completion problem \eqref{low-rank}}. A typical run demonstrating the superlinear convergence rate of ROQ-free is shown in Figure~\ref{fig1}, which aligns with our theoretical results.
Figure~\ref{fig2} plots the KKT residual of all compared methods.
We can see that the RQO-free method exhibits a significant advantage over the other methods in terms of computational efficiency and accuracy. Although RALM and REPMs require the fewest outer iterations, the high computational cost per iteration negates the advantage in total computation time. 
Moreover, the RQO-free method yields a much more accurate solution than other methods. Such observations are verified by Table~\ref{tab_rank1}.

\begin{figure}[h]
	\centering    \includegraphics[width=1.0\linewidth]{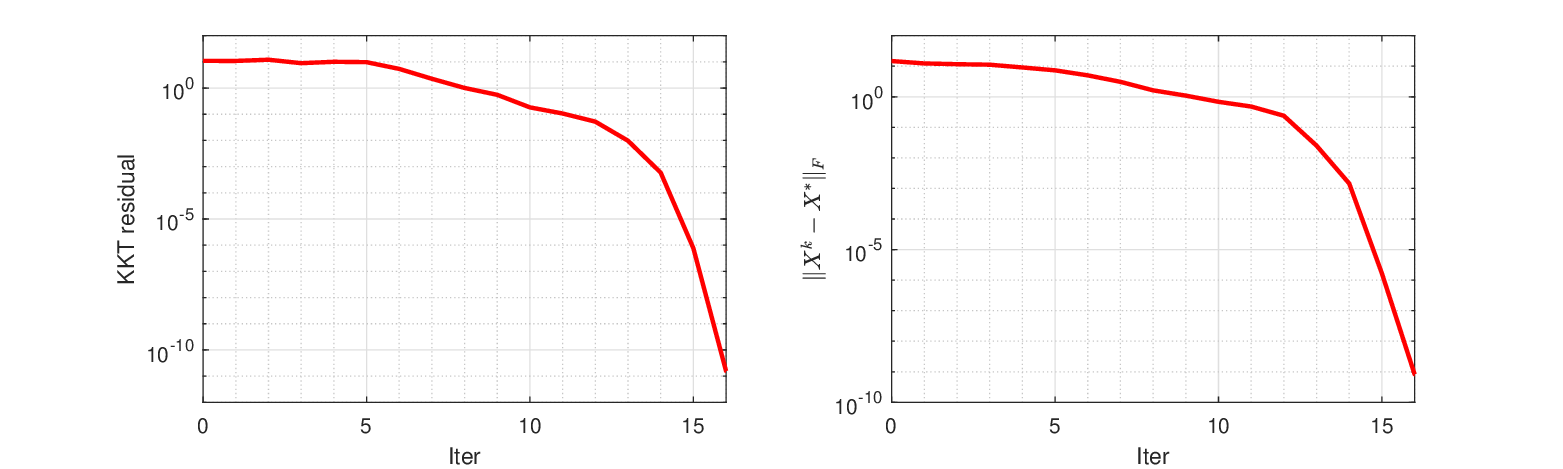}
	\caption{The convergence behavior of the KKT residual of Algorithm \ref{algo1} on problem \eqref{low-rank} with $(d,s,r)=(10,15,4)$. $X^*$ is a high-accuracy solution with KKT residual on the magnitude of $10^{-14}$.}
	\label{fig1}
\end{figure}

\begin{figure}[h]
	\centering    \includegraphics[width=1.0\linewidth]{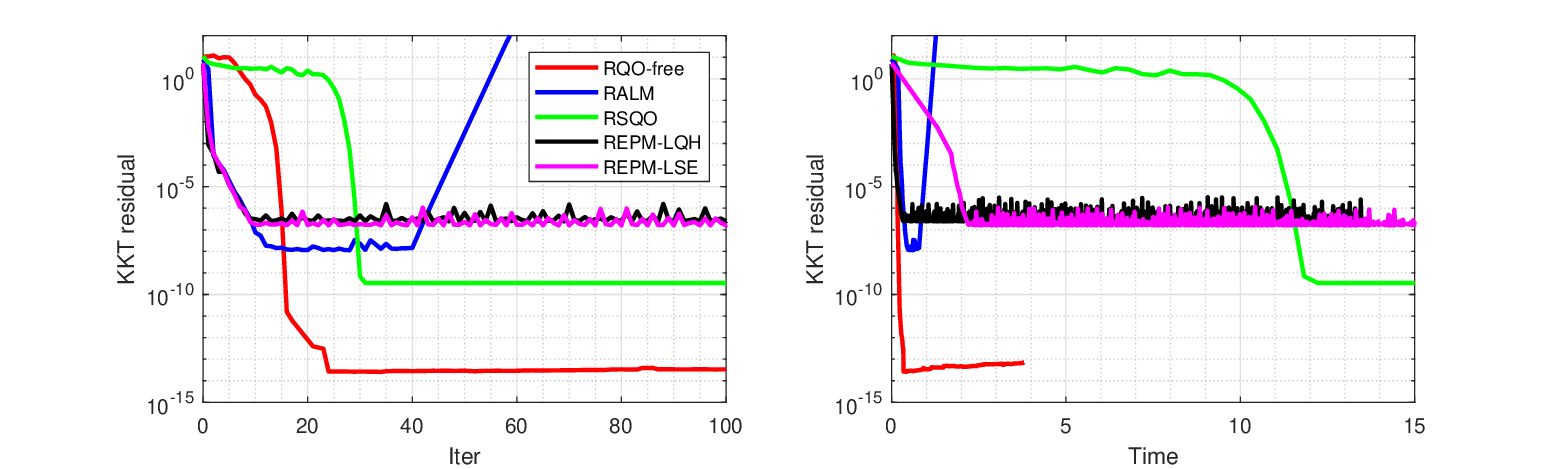}
	\caption{The convergence behavior of the KKT residuals for five algorithms on problem \eqref{low-rank} with $(d,s,r)=(10,15,4)$.}
\label{fig2}
\end{figure}


Table~\ref{tab_rank2} reports the success rates for problem \eqref{low-rank} from 20 random runs on problems with varying dimensions and average time among successful runs. Specifically, a run is called a success if the algorithm can find an iterate with a KKT residual below $10^{-7}$ before termination. The termination parameters $N_{\max}$ and $T_{\max}$ are respectively set to $2000$ and $600$ for RSQO and RQO-free and $5000$ and $600$ for RALM and REPMs.
As observed, RSQO, RALM, and RQO-free are more stable than REPMs, while RALM and RQO-free exhibit greater speed than RSQO. We further compared the performance of RSQO, RALM, and RQO-free at higher accuracy. More precisely, we redefined the success criterion as finding a solution with a KKT residual less than $5 \times 10^{-10}$ while keeping the failure criteria unchanged. Under this setting, Table \ref{tab_rank3} shows the success rate out of 20 trials, along with the average time among successful trials. It can be observed that RALM can not achieve this level of precision test, and RQO-free not only solves the problems more stably but also faster than RSQO. In summary, RQO-free is the most effective method among the compared algorithms.

{For the nonnegative PCA problem \eqref{non-pca}, we adopt the same experimental setup, except that the success criterion is redefined as achieving a KKT residual below $10^{-9}$ . Under this setting, Tables \ref{tab_pca} and \ref{tab_pca1} correspond to Tables \ref{tab_rank1} and \ref{tab_rank2}, respectively. The overall results are largely consistent with those for problem \eqref{low-rank}, except that the CPU times for RALM and the REPMs are slightly shorter.}
\begin{table}[h]
\centering
\caption{Accuracy of the compared methods on the problem \eqref{low-rank} versus number of iterations. A dash ``$-$'' indicates that the algorithm failed to reach the corresponding KKT residual. ``Res", ``Time'', ``Iter'', and ``TIter'' denote, respectively, the KKT residual, the CPU time (in seconds), the number of outer iterations, and the total number of inner iterations used for solving the penalty subproblems (for RALM and REPMs).}
\label{tab_rank1}
\setlength{\tabcolsep}{4pt}     
{\footnotesize
	\begin{tabular}{l|cc|cc|cc|cc|cc}
		\hline  
		$(d,s,r)$&\multicolumn{10}{c}{$(10,15,3)$}\\
		\hline  
		Methods&\multicolumn{2}{c|}{RQO-free}&\multicolumn{2}{c|}{RSQO}&\multicolumn{2}{c|}{RALM}&\multicolumn{2}{c|}{REPM-LQH}&\multicolumn{2}{c}{REPM-LSE}\\
		\hline
		\quad Res&Iter&Time&Iter&Time&Iter(TIter)&Time&Iter(TIter)&Time&Iter(TIter)&Time\\
		\hline
		$10^{1}$  & 1  & 0.012 & 1  & 0.427 & 1(201) & 0.495 & 1(36) & 0.101 & 1(201) & 1.163 \\
		$1$       & 1  & 0.012 & 1  & 0.427 & 1(201) & 0.495 & 1(36) & 0.101 & 1(201) & 1.163 \\
		$10^{-2}$ & 14 & 0.136 & 28 & 9.399 & 2(246) & 0.612 & 1(36) & 0.101 & 1(201) & 1.163 \\
		$10^{-4}$ & 19 & 0.165 & 29 & 9.723 & 3(271) & 0.675 & 1(36) & 0.101 & 3(434) & 2.597 \\
		$10^{-6}$ & 20 & 0.171 & 30 & 10.049 & 6(284) & 0.708 & 6(83) & 0.204 & 6(481) & 2.849 \\
		$10^{-8}$ & 21 & 0.198 & 32 & 10.688 & 10(319) & 0.783 & -- & -- & -- & -- \\
		$10^{-10}$& 21 & 0.198 & -- & -- & -- & -- & -- & -- & -- & -- \\
		$10^{-12}$& 23 & 0.214 & -- & -- & -- & -- & -- & -- & -- & -- \\
		$10^{-14}$& 30 & 0.266 & -- & -- & -- & -- & -- & -- & -- & -- \\
		$10^{-15}$& 34 & 0.289 & -- & -- & -- & -- & -- & -- & -- & -- \\
		\hline
		\hline  
		$(d,s,r)$&\multicolumn{10}{c}{$(20,30,6)$}\\
		\hline  
		Methods&\multicolumn{2}{c|}{RQO-free}&\multicolumn{2}{c|}{RSQO}&\multicolumn{2}{c|}{RALM}&\multicolumn{2}{c|}{REPM-LQH}&\multicolumn{2}{c}{REPM-LSE}\\
		\hline
		\quad Res&Iter&Time&Iter&Time&Iter(TIter)&Time&Iter(TIter)&Time&Iter(TIter)&Time\\
		\hline
		$10^{1}$   & 1  & 0.024 & 1  & 5.091 & 1(151)  & 1.029 & 1(74)  & 0.413 & 1(201)  & 3.056 \\
		$1$        & 8 & 0.351 & 2 & 10.552 & 1(151)  & 1.029 & 1(74)  & 0.413 & 1(201)  & 3.056 \\
		$10^{-2}$  & 16 & 0.641 & 36 & 184.340 & 2(194)  & 1.273 & 1(74)  & 0.413 & 1(201)  & 3.056 \\
		$10^{-4}$  & 19 & 0.751 & 37 & 189.320 & 2(194)  & 1.273 & 1(74) & 0.413 & 3(451)  & 7.196\\
		$10^{-6}$  & 20 & 0.790 & 38 & 194.160 & 6(225)  & 1.458 & 6(188) & 1.015 & 6(527)  & 8.431 \\
		$10^{-8}$  & 21 & 0.851 & 38 & 194.160 & 11(301) & 1.881 & -- & -- & -- & -- \\
		$10^{-10}$ & 21 & 0.851 & -- & -- & -- & -- & -- & -- & -- & -- \\
		$10^{-12}$ & 22 & 0.892 & -- & -- & -- & -- & -- & -- & -- & -- \\
		$10^{-14}$ & 29 & 1.147 & -- & -- & -- & -- & -- & -- & -- & -- \\
		\hline
	\end{tabular}
}
\end{table}

\begin{table}[h]
\centering
\caption{Performance of various the compared methods on problem \eqref{low-rank}.}\label{tab_rank2}
\setlength{\tabcolsep}{7pt}    
{\footnotesize
	\begin{tabular}{c|cc|cc|cc|cc|cc}
		\hline  
		Methods&\multicolumn{2}{c|}{RQO-free}&\multicolumn{2}{c|}{RSQO}&\multicolumn{2}{c|}{RALM}&\multicolumn{2}{c|}{REPM-LQH}&\multicolumn{2}{c}{REPM-LSE}\\
		\hline
		$(d,s,r)$&Rate&Time&Rate&Time&Rate&Time&Rate&Time&Rate&Time\\
		\hline
		(10,10,3)&1.00&0.405&1.00&6.825&1.00&0.722&0.15&0.409&0.25&2.251\\
		\hline
		(10,10,4)&1.00&0.067&1.00&5.998&1.00& 0.159&0.35&0.186&0.35&1.340\\   
		\hline 
		(10,20,3)&0.90&1.936&1.00& 23.041&1.00&1.353&0.20&0.924&0.10&4.496\\
		\hline
		(10,20,4)&1.00&0.190&1.00&23.495&1.00&0.659&0.10&0.383&0.05&3.470\\
		\hline
		
		(20,20,6)&1.00&0.742&1.00& 88.334&0.95& 0.678&0.10&0.722&0.10&5.797\\
		\hline    
		(20,20,8)&1.00&0.750&1.00&127.287&0.95& 0.439&0.10&0.517&0.10&6.034\\
		
		\hline
		(20,40,6)&0.90&9.032&1.00&346.847&1.00&2.979&0.10&2.779&0.20&16.812\\
		\hline
		(20,40,8)&1.00&2.886&0.70&523.267&1.00& 2.281&0.00&$-$& 0.15&21.300\\
		\hline
	\end{tabular}
}
\end{table}

\begin{table}[h]
\centering
\caption{Performance of RQO-free, RSQO and RALM on the problem \eqref{low-rank}.}\label{tab_rank3}
\begin{tabular}{l|cc|cc|cc|cc}
	\hline  
	$(d,s,r)$&\multicolumn{2}{c|}{(10,20,3)}&\multicolumn{2}{c|}{(10,20,4)}&\multicolumn{2}{c|}{(20,40,6)} &\multicolumn{2}{c}{(20,40,8)}\\
	\hline  
	Method&Rate&Time&Rate&Time&Rate&Time&Rate&Time\\
	\hline  
	RQO-free&0.85& 1.463& 0.80&0.259&0.65&8.066&0.60&3.503\\
	RSQO&0.70&323.532&0.65&325.660&0.00&$-$&0.05&418.823\\
	RALM&0.00&$-$&0.00&$-$&0.00&$-$&0.00&$-$\\
	\hline 
\end{tabular}
\end{table}



\begin{table}[h]
\centering
\caption{Accuracy of the compared methods on  problem \eqref{non-pca} versus number of iterations. }\label{tab_pca}
\setlength{\tabcolsep}{4pt}     
{\footnotesize
	\begin{tabular}{c|cc|cc|cc|cc|cc}
		\hline  
		$(d,s)$&\multicolumn{10}{c}{$(50,15)$}\\
		\hline  
		Methods&\multicolumn{2}{c|}{RQO-free}&\multicolumn{2}{c|}{RSQO}&\multicolumn{2}{c|}{RALM}&\multicolumn{2}{c|}{REPM-LQH}&\multicolumn{2}{c}{REPM-LSE}\\
		\hline
		\quad Res&Iter&Time&Iter&Time&Iter(TIter)&Time&Iter(TIter)&Time&Iter(TIter)&Time\\
		\hline
		$10^{1}$   & 1 & 0.014 & 1 & 6.792 & 1(3)   & 0.071 & 1(3)   & 0.027 & 1(3)    & 0.059 \\
		$1$        & 1 & 0.014 & 1 & 6.792 & 1(3)   & 0.071 & 1(3)   & 0.027 & 1(3)    & 0.059 \\
		$10^{-2}$  & 2 & 0.623 & 1 & 6.792 & 3(10)  & 0.143 & 1(3)   & 0.027 & 4(15)   & 0.216 \\
		$10^{-4}$  & 3 & 0.883 & 2 & 13.488 & 3(10)  & 0.143 & 1(3)   & 0.027 & 6(25)   & 0.358 \\
		$10^{-6}$  & 3 & 0.883 & 2 & 13.488 & 6(14)  & 0.185 & 4(10)  & 0.090 & 8(31)   & 0.442 \\
		$10^{-8}$  & 3 & 0.883 & 2 & 13.488 & 8(21)  & 0.255 & 4(10)  & 0.090 & 9(34)  & 0.482 \\
		$10^{-10}$ & 6 & 1.580 & 2 & 13.488 & 14(29) & 0.321 & 12(20) & 0.156 & 9(34)  & 0.482 \\
		$10^{-12}$ & 6 & 1.580 & -- & --    & 19(35) & 0.377 & 12(20) & 0.156 & 19(56)  & 0.753 \\
		$10^{-13}$ & 6 & 1.580 & -- & --    & -- & --     & -- & --     & -- & --      \\
		\hline
		\hline  
		\quad$(d,s)$&\multicolumn{10}{c}{(70,20)}\\
		\hline  
		Methods&\multicolumn{2}{c|}{RQO-free}&\multicolumn{2}{c|}{RSQO}&\multicolumn{2}{c|}{RALM}&\multicolumn{2}{c|}{REPM-LQH}&\multicolumn{2}{c}{REPM-LSE}\\
		\hline
		\quad Res&Iter&Time&Iter&Time&Iter(TIter)&Time&Iter(TIter)&Time&Iter(TIter)&Time\\
		\hline
		$10^{1}$   & 1 & 0.044 & 1 & 32.183 & 1(3)  & 0.205 & 1(3)  & 0.069 & 1(3)   & 0.150 \\
		$1$        & 1 & 0.044 & 1 & 32.183 & 1(3)  & 0.205 & 1(3)  & 0.069 & 1(3)   & 0.150 \\
		$10^{-2}$  & 2 & 1.749 & 1 & 32.183 & 3(9)  & 0.375 & 1(3)  & 0.069 & 5(16)  & 0.573 \\
		$10^{-4}$  & 3 & 3.458 & 2 & 64.881 & 3(9)  & 0.375 & 1(3)  & 0.069 & 7(23)  & 0.787 \\
		$10^{-6}$  & 3 & 3.458 & 2 & 64.881 & 3(9)  & 0.375 & 5(11) & 0.251 & 8(26)  & 0.887 \\
		$10^{-8}$  & 3 & 3.458 & 2 & 64.881 & 6(14) & 0.489 & 5(11) & 0.251 & 9(29)  & 0.982 \\
		$10^{-10}$ & 3 & 3.458 & -- & --    & 6(14) & 0.489     & -- & --     & -- & --      \\
		$10^{-12}$ & 4 & 4.801 & -- & --     & -- & --     & -- & --     & -- & --      \\
		$10^{-13}$ & 4 & 4.801 & -- & --     & -- & --     & -- & --     & -- & --      \\
		\hline
	\end{tabular}
}
\end{table}

\begin{table}[h]
	\centering
	\caption{Performance of various the compared methods on problem \eqref{non-pca}.}\label{tab_pca1}
    \setlength{\tabcolsep}{7pt}  
    {\tiny
	\begin{tabular}{c|cc|cc|cc|cc|cc}
		\hline  
		Methods&\multicolumn{2}{c|}{RQO-free}&\multicolumn{2}{c|}{RSQO}&\multicolumn{2}{c|}{RALM}&\multicolumn{2}{c|}{REPM-LQH}&\multicolumn{2}{c}{REPM-LSE}\\
		\hline
        $(d,s)$&Rate&Time&Rate&Time&Rate&Time&Rate&Time&Rate&Time\\
\hline
$(50,20)$ & 1.00 & 2.040 & 1.00 & 17.961 & 0.75 & 0.276
            & 0.80 & 0.124 & 0.65 & 0.462 \\
\hline            
$(70,20)$ & 1.00 & 4.450 & 0.25 & 40.600 & 0.55 & 0.533
            & 0.70 & 0.319 & 0.50 & 0.915 \\
            \hline
$(90,20)$ & 1.00 & 4.676 & 0.15 & 74.623 & 0.75 & 0.665
            & 0.95 & 0.319 & 0.70 & 1.196 \\
            \hline
$(70,10)$ & 1.00 & 3.353 & 1.00 & 8.884 & 0.70 & 0.182
            & 0.45 & 0.107 & 0.55 & 0.385 \\
            \hline
$(70,30)$ & 1.00 & 2.636 & 0.00 & -- & 0.65 & 0.482
            & 0.65 & 0.404 & 0.50 & 1.600 \\
  \hline          
	\end{tabular}
    }
\end{table}

\section{Conclusions} \label{Sec:Conclusion}

In this paper, we have presented a QO-free method for solving Riemannian optimization problems with equality and inequality constraints.
At each iteration, the master search direction is obtained by 
solving three linear systems, all of which are always consistent.
This is not computationally expensive, as these systems share the same linear operator.
In order to avoid the Maratos effect, the correction direction is generated by solving a least squares problem, which can be neglected in early iterations.
Global convergence as well as strong and superlinear convergence have been proved under mild assumptions. Promising preliminary numerical results were reported.

As future work, the proposed method can be further improved. For instance, we may (i) reduce the scale of the linear systems by introducing the active set strategy (see, e.g., \cite{chen2006feasible}), (ii) extend it to handle infeasible initial points by the idea of the strongly subfeasible direction methods (see, e.g., \cite{Jian2010sequential,Tang2012}), and (iii) incorporate a trial line search to check whether $t_k=1$ is acceptable, 
which can reduce the number of times the least squares problem is solved and the arc search is executed (see, e.g., \cite{jian2010new}).

In addition, it is of interest to apply our method to other practical problems, such as multicontact posture computation for static robots \cite{brossette2018multicontact} and stable linear system identification \cite{obara2024stable}; however, this lies beyond the scope of this paper.

\vspace{0.3cm}

\noindent\textbf{Funding}

\noindent 
This work was supported by the Guangxi Natural Science Foundation (2026GXNSFDA00640024), 
the National Natural Science Foundation of China (Nos. 12271113, 12371311), the Natural Science Foundation of Fujian Province (No. 2023J06004), the Fundamental Research Funds for the Central Universities (No. 20720240151), and Xiaomi Young Talents Program.

\bigskip
\bibliography{sn-bibliography.bib}

\appendix

\noindent
\section*{Appendix}  
\section{The Proofs in Section \ref{Sec:algorithm}}\label{AppendixA}
\noindent
{\bf{Proof of Proposition \ref{tp-p} }}
(i) Since $(x^*,\lambda^*)$ is a KKT pair for \eqref{TRpro}, we obtain
\begin{subequations}\label{3kkt}
\begin{align}
	\label{3kkt1}
	&{\rm grad}F_\rho(x^*)+\sum_{i\in \mathcal{L}}\lambda^*_i{\rm grad}c_i(x^*)=0_{x^*},\\
	\label{3kkt2}
	&\lambda^*_i\geq 0,\,\,c_i(x^*)\leq 0,\quad \forall i\in \mathcal{L},\\
	\label{3kkt3}
	&\lambda^*_ic_i(x^*)=0,\quad \forall i\in \mathcal{L}.
\end{align}	
\end{subequations}
It follows from \eqref{3kkt1} that
$${\rm grad}f(x^*)+\sum_{i\in \mathcal{I}}\lambda^*_i{\rm grad}c_i(x^*)+\sum_{i\in\mathcal{E}}(\lambda^*_i-\rho){\rm grad}c_i(x^*)=0_{x^*}.$$
This, together with the fact that $c_i(x^*)=0$ for $i\in\mathcal{E}$, as well as \eqref{3kkt2} and \eqref{3kkt3}, establishes the conclusion.

(ii) Note that $(x^*,\lambda_\mathcal{I}^*,\lambda_\mathcal{E}^*-\rho e)$ is a KKT pair of problem \eqref{Rpro}, thus
\[
\begin{aligned}
&{\rm grad}F_\rho(x^*)+\sum_{i\in \mathcal{L}}\lambda^*_i{\rm grad}c_i(x^*)\\
=&{\rm grad}f(x^*)+\sum_{i\in \mathcal{I}}\lambda^*_i{\rm grad}c_i(x^*)+\sum_{i\in\mathcal{E}}(\lambda^*_i-\rho){\rm grad}c_i(x^*)\\
=&0_{x^*},
\end{aligned}
\]
and $\lambda_{\mathcal{I}}^*\ge 0$, $c_i(x^*)\le 0$, $\lambda^*_ic_i(x^*)=0$ for all $i\in\mathcal{I}$. Therefore, the conclusion is follows since $\lambda_{\mathcal{E}}^*\ge 0$ and $c_i(x^*)=0$ for all $i\in\mathcal{E}$.\qed

\vspace{0.5cm}
\noindent
{\bf{Proof of Lemma \ref{le1} }} 
Fix an arbitrary index $k$, and let $(\bar{\eta},\bar{\lambda})$ be a solution to the system 
$$\mathcal{A}_k(\eta,\lambda)=(0_{x^k},0).$$
This implies
\begin{eqnarray}\label{proof10}
&&\mathcal{H}_k[\bar{\eta}]+\sum_{i\in \mathcal{L}}\bar{\lambda}_i{\rm grad}c_i(x^k)=0_{x^k},\nonumber\\
&&\alpha_i^k\langle{\rm grad}c_i(x^k),\bar{\eta}\rangle-\sqrt{2}\beta_i^k\bar{\lambda}_i=0,\quad \forall i\in \mathcal{L}.
\end{eqnarray}
From \eqref{alha-beta}, we thus have
$$\left\langle \bar{\eta},\mathcal{H}_k[\bar{\eta}]\right\rangle+\sum_{i\in  \mathcal{L}}\dfrac{\alpha_i^k\langle{\rm grad}c_i(x^k),\bar{\eta}\rangle^2}{\sqrt{2}\beta_i^k}=0.$$
Since $\alpha_i^k\geq 0$ for all $i\in \mathcal{L}$, we obtain $\left\langle \bar{\eta},\mathcal{H}_k[\bar{\eta}]\right\rangle=0$. 
This together with the positive definiteness of $\mathcal{H}_k$ shows that $\bar{\eta}=0_{x^k}$.
In addition, it follows from \eqref{proof10} that $\bar{\lambda}=0$. Thus, the proof is complete.\qed

\vspace{0.5cm}
\noindent
{\bf{Proof of Lemma \ref{le3}}}
If $\eta^{k1}$ and $\eta^k$ have been generated by Algorithm~\ref{algo1}, then we can conclude that $\eta^{k0}\neq 0_{x^k}$. In fact, if $\eta^{k0}= 0_{x^k}$, then it follows from \eqref{s12} that $\sqrt{2}\beta_i^k\lambda_i^{k0}=0$ for all $i\in \mathcal{L}$. This along with \eqref{alha-beta} shows that $\lambda^{k0}=0$. 
Thus, the conditions (a)-(c) in the step 3 of Algorithm \ref{algo1} are all satisfied, and the algorithm cannot proceed to produce $\eta^{k1}$ and $\eta^k$. This is a contradiction.

(i) By \eqref{s11} and \eqref{s21}, we obtain
\begin{eqnarray}
\label{proof30}
&&\langle{\rm grad}F_{\rho_k}(x^k),\eta^{k1}\rangle
=  -\langle \eta^{k1},\mathcal{H}_k[\eta^{k0}]\rangle-\sum_{i\in \mathcal{L}}\lambda_i^{k0}\langle{\rm grad}c_i(x^k),\eta^{k1}\rangle,\\
\label{proof31}
&&\langle{\rm grad}F_{\rho_k}(x^k),\eta^{k0}\rangle
=  -\langle \eta^{k1},\mathcal{H}_k[\eta^{k0}]\rangle-\sum_{i\in \mathcal{L}}\lambda_i^{k1}\langle{\rm grad}c_i(x^k),\eta^{k0}\rangle.
\end{eqnarray}
For $i\in \mathcal{L}$, if $\alpha_i^k=0$, then it follows from \eqref{s12} and \eqref{alha-beta} that  $\lambda_i^{k0}=0$. This in turn implies $\lambda_i^{k1}=0$ by \eqref{s22}. If $\alpha_i^k\neq0$, from \eqref{s12} and \eqref{s22} we have
\begin{eqnarray*}
&&\langle{\rm grad}c_i(x^k),\eta^{k0}\rangle=\dfrac{\sqrt{2}\beta_i^k\lambda_i^{k0}}{\alpha_i^k},\\
&&\langle{\rm grad}c_i(x^k),\eta^{k1}\rangle=\dfrac{\sqrt{2}\beta_i^k\lambda_i^{k1}}{\alpha_i^k}+(\min\{\lambda_i^{k0},0\})^3.
\end{eqnarray*}
This along with \eqref{proof30} and \eqref{proof31} shows that
\begin{equation*}
\begin{aligned}
	\langle{\rm grad}F_{\rho_k}(x^k),\eta^{k1}\rangle&=\langle{\rm grad}F_{\rho_k}(x^k),\eta^{k0}\rangle-\mathop\sum\limits_{i:\lambda_i^{k0}\neq 0}\lambda_i^{k0}(\min\{\lambda_i^{k0},0\})^3\\
	&=\langle{\rm grad}F_{\rho_k}(x^k),\eta^{k0}\rangle-\mathop\sum\limits_{i:\lambda_i^{k0}<0}(\lambda_i^{k0})^4 < 0.
\end{aligned}
\end{equation*}
where the last inequality is due to \eqref{dk01}.

(ii) The definitions of $\eta^k$ and $\theta_k$ show that
\begin{equation*}
\begin{aligned}
	\langle{\rm grad}F_{\rho_k}(x^k),\eta^{k}\rangle&=\langle{\rm grad}F_{\rho_k}(x^k),\eta^{k1}\rangle-\theta_k\langle{\rm grad}F_{\rho_k}(x^k),\eta^{k1}-\eta^{k2}\rangle\\
	&\leq\tau\langle{\rm grad}F_{\rho_k}(x^k),\eta^{k1}\rangle <0,
\end{aligned}
\end{equation*}
which completes this proof.\qed

\vspace{0.5cm}
\noindent
{\bf{Proof of Lemma \ref{le4} } }
We divide the proof into two parts.

(i) For each $k$, using the Taylor expansion (see \cite[Sec. 4.1]{Boumal2023}) of $F_{\rho_k}$ around $x^k$, we have
\begin{eqnarray*}
&&F_{\rho_k}(R_{x^k}(t\eta^k+t^2\tilde{\eta}^k))-F_{\rho_k}(x^k)-\sigma t\langle{\rm grad}F_{\rho_k}(x^k),\eta^k\rangle\\
&=&F_{\rho_k}(x^k)+t\langle{\rm grad}F_{\rho_k}(x^k),\eta^k\rangle-F_{\rho_k}(x^k)-\sigma t\langle{\rm grad}F_{\rho_k}(x^k),\eta^k\rangle+o(t)\\
&=&(1-\sigma)t\langle{\rm grad}F_{\rho_k}(x^k),\eta^k\rangle+o(t).
\end{eqnarray*}
This, together with the facts that $\langle{\rm grad}F_{\rho_k}(x^k),\eta^{k}\rangle<0$ (by Lemma \ref{le3}) and $\sigma\in(0,1/2)$, shows that there exists a $\bar{t}_0>0$ such that \eqref{linef} holds for all $t\in[0,\bar{t}_0]$.

(ii) For each $i\in\mathcal{L}$, since $x^k\in{\rm int}\mathcal{F}$ and $c_i$ is continuous, there exists a neighborhood $U_i^k$ of $x^k$ such that $c_i(x)<0$ for all $x\in U_i^k$. By the continuity of $R$, there exists a $\bar{t}_i>$ such that $R_{x^k}(t\eta^k+t^2\tilde{\eta}^k)\in U_i^k$ for all $t\in[0,\bar{t}_i]$; that is, $c_i(R_{x^k}(t\eta^k+t^2\tilde{\eta}^k))< 0$ for all $t\in[0,\bar{t}_i]$. 

In summary, let $\bar{t}=\min\{\bar{t}_0,\bar{t}_1,\cdots,\bar{t}_{m+\ell}\}$.
Then, the arc search conditions \eqref{linef} and \eqref{lineg}  are both satisfied for all sufficiently small $t$ with $t\leq \bar{t}$.\qed

\section{The Proofs in Section \ref{Sec:global}}\label{AppendixB}
{\bf{Proof of Lemma \ref{facanshu}}} Suppose by contradiction that $\rho_k$ is updated infinitely many times. Then, there exists an infinite index set $\mathcal{K}\subset \mathbb{N}$ such that for all $k\in \mathcal{K}$, $\rho_{k+1}>\rho_k$; this implies
\begin{eqnarray}
\label{shi1}
&&\|\eta^{k0}\|\leq r_1,\\
\label{shi2}
&&\lambda^{k0}_\mathcal{E}\ngeq r_2e,\\
\label{shi3}
&&\lambda^{k0}_\mathcal{L}\geq -r_3e,
\end{eqnarray}
for all $k\in \mathcal{K}$. On the other hand, since $\eta^{k0}$ and $\lambda^{k0}$ are solutions to \eqref{s1}, we have
\begin{equation}\label{shi5}
\mathcal{H}_k[\eta^{k0}]+{\rm grad}f(x^k)+\sum_{i\in \mathcal{E}}(\lambda_i^{k0}-\rho_k){\rm grad}c_i(x^k)+\sum_{i\in \mathcal{I}}\lambda_i^{k0}{\rm grad}c_i(x^k)=0_{x^k},
\end{equation}
\begin{equation}\label{shi6}
\alpha_i^k\langle{\rm grad}c_i(x^k),\eta^{k0}\rangle-\sqrt{2}\beta_i^k\lambda_i^{k0}=0,\quad \forall\, i\in \mathcal{L}.
\end{equation}
It follows from the update way of $\rho_k$ that $\rho_k\rightarrow+\infty$ as $k\rightarrow\infty$. Then by \eqref{shi2}, we have $\{\|\lambda_{\mathcal{E}}^{k0}-\rho_ke\|_\infty\}_{\mathcal{K}}\rightarrow+\infty$, where $\|\cdot\|_\infty$ denotes the infinity norm. Now, we define
$$z_k=\max\{\|\lambda_{\mathcal{E}}^{k0}-\rho_ke\|_\infty,\|\lambda_{\mathcal{I}}^{k0}\|_\infty,1\},$$
and then it follows
\begin{equation}\label{shi6.5}
\{z_k\}_{\mathcal{K}}\rightarrow+\infty.
\end{equation}
For any $k\in \mathcal{K}$, we denote
\begin{equation}\label{shi7}
\hat{\lambda}_{\mathcal{E}}^{k}=\dfrac{\lambda_{\mathcal{E}}^{k0}-\rho_ke}{z_k}
\quad {\rm and } \quad
\hat{\lambda}_{\mathcal{I}}^{k}=\dfrac{\lambda_{\mathcal{I}}^{k0}}{z_k}.
\end{equation}		
By \eqref{shi6.5} and the definition of $z_k$, we know that $\max\{\|\hat{\lambda}_{\mathcal{E}}^{k}\|_\infty,\|\hat{\lambda}_{\mathcal{I}}^{k}\|_\infty\}=1$ for all $k\in \mathcal{K}$ large enough. It follows from Assumption \ref{A2} that $\{x^k\}_{\mathcal{K}}$ is bounded. Thus, there exist an infinite index set $\mathcal{K}'\subset\mathcal{K}$ and $x^*\in\mathcal{F}$, $\hat{\lambda}_{\mathcal{E}}^*\in\mathbb{R}^\ell$, $\hat{\lambda}_{\mathcal{I}}^*\in\mathbb{R}^m$, where $\hat{\lambda}_{\mathcal{E}}^*$ and $\hat{\lambda}_{\mathcal{I}}^*$ are not simultaneously the zero vectors, such that
$$\{x^k\}_{\mathcal{K}'}\rightarrow x^*,\quad \{\hat{\lambda}_{\mathcal{E}}^{k}\}_{\mathcal{K}'}\rightarrow \hat{\lambda}_{\mathcal{E}}^*,\quad \{\hat{\lambda}_{\mathcal{I}}^{k}\}_{\mathcal{K}'}\rightarrow \hat{\lambda}_{\mathcal{I}}^*.$$

On the other hand, for all $i\in \mathcal{L}$, by the boundedness of $\{x^k\}_{\mathcal{K}}$ and the twice continuous differentiability of $c_i(x)$, the sequence $\{\|{\rm grad}c_i(x^k)\|\}_{\mathcal{K}}$ is bounded. 
Since $\{\mu_i^k\}_{\mathcal{K}'}$ and $\{\beta_i^k\}_{\mathcal{K}'}$ are bounded, without loss of generality, we can assume that $\{\mu_i^k\}_{\mathcal{K}'}\rightarrow \mu_i^*$ and $\{\beta_i^k\}_{\mathcal{K}'}\rightarrow \beta_i^*$. 
It then follows that 
$$\beta_i^*=\left(1-\dfrac{\mu_i^*}{\sqrt{c_i^2(x^*)+(\mu_i^*)^2}}\right)^{1/2}>0,\quad \forall\, i\notin \mathcal{L}(x^*).$$

For $i\in \mathcal{L}$, dividing both sides of \eqref{shi6} by $z_k$ and taking the limit over $\mathcal{K}'$, from the boundedness of $\{\alpha_i^k\}_{\mathcal{K}'}$ and \eqref{shi1}, we have
$$\left\{\dfrac{\beta_i^k\lambda_i^{k0}}{z_k}\right\}_{\mathcal{K}'}\rightarrow 0.$$
For $i\notin\mathcal{E}(x^*)$, we have $\beta_i^*>0$, and thus
$$\left\{\dfrac{\lambda_i^{k0}}{z_k}\right\}_{\mathcal{K}'}\rightarrow 0,\ \forall i\notin\mathcal{E}(x^*).$$
It follows from \eqref{shi7} that there exists a constant $w\geq 0$ such that $\{\rho_{k}/z_k\}_{\mathcal{K}'}\rightarrow w$ and 
\begin{equation}\label{shi8.5}
\hat{\lambda}_i^*=-w, \ \forall i\notin\mathcal{E}(x^*).
\end{equation}
Combining \eqref{shi3} and \eqref{shi7}, we obtain
\begin{equation}\label{shi9}
\hat{\lambda}_i^*\geq-w, \quad\forall\, i\in\mathcal{E};\quad \hat{\lambda}_\mathcal{I}^*\geq 0.
\end{equation}		
Similarly, for $i\notin\mathcal{I}(x^*)$, we obtain
$$\left\{\dfrac{\lambda_i^{k0}}{z_k}\right\}_{\mathcal{K}'}\rightarrow  \hat{\lambda}_{i}^*=0,\quad i\notin\mathcal{I}(x^*).$$

Finally, dividing both sides of \eqref{shi5} by $z_k$, taking the limit over $\mathcal{K}'$,  and combining \eqref{shi1}, Assumption \ref{A3}, and the twice continuous differentiability of $f(x)$, we obtain
\begin{equation}\label{shi10}
\sum_{i\in \mathcal{E}}\hat{\lambda}_{i}^*{\rm grad}c_i(x^*)+\sum_{i\in \mathcal{I}(x^*)}\hat{\lambda}_{i}^*{\rm grad}c_i(x^*)=0_{x^*}.
\end{equation}
Since $\hat{\lambda}_{\mathcal{E}}^*$ and $\hat{\lambda}_{\mathcal{I}}^*$ are not both zero, it follows from Assumption \ref{A1} that $\mathcal{E}(x^*)\neq\mathcal{E}$ (i.e., $x^*\in\mathcal{F}\backslash\mathcal{F}_P$) and $w>0$. Dividing both sides of \eqref{shi10} by $-w$ and combining with \eqref{shi8.5}, we have 
\begin{equation*}
\sum_{i\in \mathcal{E}}{\rm grad}c_i(x^*)-\sum_{i\in \mathcal{E}(x^*)}(\dfrac{\hat{\lambda}_{i}^*}{w}+1){\rm grad}c_i(x^*)-\sum_{i\in \mathcal{I}(x^*)}\dfrac{\hat{\lambda}_{i}^*}{w}{\rm grad}c_i(x^*)=0_{x^*}.
\end{equation*}
This together with \eqref{shi9} contradicts Assumption \ref{A3.5}; thus, the proof is completed.\qed

\vspace{0.5cm}
\noindent
{\bf{Proof of Lemma \ref{le6}}} (i) From Assumption \ref{A2}, we see that the sequence $\{{\rm grad}F_{\bar{\rho}}(x^k)\}$ is bounded. This together with Lemma \ref{le5} and  \eqref{relationship1} shows that $\{(\eta^{k0},\lambda^{k0})\}$ is bounded. 
It then follows from the boundedness of $\{\alpha_i^k\},i\in \mathcal{L}$ shows that $\{v^{k1}\}$ is bounded. So $\{(\eta^{k1},\lambda^{k1})\}$ is bounded by \eqref{relationship2}. Furthermore, the boundedness of $\{\eta^{k1}\}$ shows that $\{v^{k2}\}$ is bounded. Therefore $\{(\eta^{k2},\lambda^{k2})\}$ is bounded by \eqref{relationship3}.

(ii) Since $\langle{\rm grad}F_{\bar{\rho}}(x^k),\eta^{k1}\rangle<0$ (see Lemma \ref{le3}), we have $\theta_k\leq 1$ for all $k$ by its definition. 
Let $C_1={\rm sup}\|\mathcal{A}_k^{-1}\|_{\rm op}$. Then we have $C_1<+\infty$ by Lemma \ref{le5}. Define $$\Delta\eta^k=\eta^k-\eta^{k1}\,\,{\rm and\,\,} \Delta\lambda^k=\lambda^k-\lambda^{k1}.$$
Note that the operator $\mathcal{A}_k^{-1}$ is a linear transformation. Then by the definition of $\eta^k$ and \eqref{relationship}, we obtain
\begin{equation*}
(\Delta\eta^k,\Delta\lambda^k)=\theta_k\mathcal{A}_k^{-1}(0_{x^k},\Delta v^{k}),
\end{equation*}
where $\Delta v^{k}=v^{k2}-v^{k1}=(-\alpha_1^k\|\eta^{k1}\|^\nu,\cdots,-\alpha_{m+\ell}^k\|\eta^{k1}\|^\nu)^{\top}$. 
Therefore, from \eqref{alha-beta}, it follows
$$\|(\Delta\eta^k,\Delta\lambda^k)\|\leq \sqrt{m+\ell}C_1\|\eta^{k1}\|^\nu.$$
Denote $C=\sqrt{m+\ell}C_1$. Then we have $\|\eta^k-\eta^{k1}\|=\|\Delta\eta^k\|\leq\|(\Delta\eta^k,\Delta\lambda^k)\|\leq C\|\eta^{k1}\|^\nu$.\qed

\vspace{0.5cm}
\noindent
{\bf{Proof of Lemma \ref{le7}}} From Lemma \ref{le3},  we have
$$\langle{\rm grad}F_{\bar{\rho}}(x^k),\eta^{k}\rangle\leq \tau\langle{\rm grad}F_{\bar{\rho}}(x^k),\eta^{k0}\rangle-\tau\mathop\sum\limits_{i:\lambda_i^{k0}<0}(\lambda_i^{k0})^4.$$
This together with \eqref{dk01} and Assumption \ref{A3} shows that
$$\langle{\rm grad}F_{\bar{\rho}}(x^k),\eta^{k}\rangle\leq -\tau a_1\|\eta^{k0}\|^2-\tau\mathop\sum\limits_{i:\lambda_i^{k0}<0}(\lambda_i^{k0})^4.$$
Therefore by $\{\eta^k\}_{\mathcal{K}}\rightarrow 0_{x^*}$, we obtain
\begin{equation}\label{proof-le7-1}
\{\eta^{k0}\}_{\mathcal{K}}\rightarrow 0_{x^*}\quad {\rm and }\quad \left\{\mathop\sum\limits_{i:\lambda_i^{k0}<0}(\lambda_i^{k0})^4\right\}_{\mathcal{K}}\rightarrow 0.
\end{equation}
By Lemma \ref{le6} (i) and \eqref{gamma}, we know that $\{\lambda^{k0}\}_{\mathcal{K}}$ and $\{\mu^k\}_{\mathcal{K}}$ are all bounded. Thus there exists $\mathcal{K}'\subseteq \mathcal{K}$ such that $\{\lambda^{k0}\}_{\mathcal{K}'}\rightarrow \lambda^*$, $\{\mu^k\}_{\mathcal{K}'}\rightarrow\mu^*$ and $\{\beta_i^k\}_{\mathcal{K}'}\rightarrow\beta_i^*,i\in \mathcal{L}$. It follows from \eqref{proof-le7-1} that $\lambda^*_i\geq 0$ for all $i\in \mathcal{L}$. 
In addition, since $\{x^k\}\subseteq{\rm int}\mathcal{F}$, we have $x^*\in\mathcal{F}$.

On the other hand, taking the limits on both sides of \eqref{s1} in $\mathcal{K}'$, by Assumption \ref{A3} and $\{\eta^{k0}\}_{\mathcal{K}}\rightarrow 0_{x^*}$, we obtain
\begin{equation}\label{proof-le7-2}
\sum_{i=1}^{m+\ell}\lambda_i^*{\rm grad}c_i(x^*)=-{\rm grad}F_{\bar{\rho}}(x^*)\quad {\rm and}\quad \beta_i^*\lambda_i^*=0,\quad i\in \mathcal{L}.
\end{equation}
For $i\notin \mathcal{L}(x^*)$, we have  
$$\beta_i^*=\left(1-\dfrac{\mu_i^*}{\sqrt{c_i^2(x^*)+(\mu_i^*)^2}}\right)^{1/2}>0.$$
This along with \eqref{proof-le7-2} shows that $\lambda_i^*=0$ for all $i\notin \mathcal{L}(x^*)$. 

Summarizing the above analysis, we can conclude that $(x^*,\lambda^*)$ is a KKT pair of problem \eqref{TRpro}. Next, we show that $\{\lambda^{k0}\}_{\mathcal{K}}\rightarrow\lambda^*$. In fact, if there is another accumulation point of $\{\lambda^{k0}\}_{\mathcal{K}}$, denoted by $\bar{\lambda}^*$. From the above analysis, we can immediately conclude that $\bar{\lambda}^*$ satisfies \eqref{proof-le7-2} upon replacing  
$\lambda^*$ with $\bar{\lambda}^*$,  
and $\bar{\lambda}^*_i=0=\lambda_i^*$ for all $i\notin \mathcal{L}(x^*)$. In addition, for $i\in \mathcal{L}(x^*)$, by Assumption \ref{A1}, we obtain $\bar{\lambda}^*_i=\lambda_i^*$. Thus $\bar{\lambda}^*=\lambda^*$.\qed

\vspace{0.5cm}
\noindent
{\bf{Proof of Lemma \ref{le8}}} Let $x^{**}$ be an accumulation point of $\{x^{k-1}\}_\mathcal{K}$. Then there exists an index set $\mathcal{K}'\subseteq \mathcal{K}$ such that $\{x^{k-1}\}_{\mathcal{K}'}\rightarrow x^{**}$. By Lemma \ref{le7}, it follows that $x^{**}$ is a KKT point of problem \eqref{TRpro}. Thus we only need to prove that $x^{**}=x^*$. Since for all $k\in\mathcal{K}'$, we have $x^k=R_{x^{k-1}}(t_{k-1}\eta^{k-1}+t_{k-1}^2\tilde{\eta}^{k-1})$. 
This together with the continuity of $R$, $\|\tilde{\eta}^{k-1}\|\leq\|\eta^{k-1}\|$ and $\{\|\eta^{k-1}\|\}_\mathcal{K}\rightarrow 0$ implies that
$$\lim_{k\in\mathcal{K}'} x^k=\lim_{k\in\mathcal{K}'}R_{x^{k-1}}(t_{k-1}\eta^{k-1}+t_{k-1}^2\tilde{\eta}^{k-1})=R_{x^{**}}(0_{x^{**}})=x^{**}.$$ Thus $x^{**}=x^*$, which completes the proof.\qed

\vspace{0.5cm}
\noindent
{\bf{Proof of Lemma \ref{le9}}} Suppose by contradiction that $x^*$ is not a KKT point of \eqref{TRpro}. Note that for all $k\in\mathcal{K}$, $\langle{\rm grad}F_{\bar{\rho}}(x^k),\eta^{k1}\rangle<0$ (see Lemma \ref{le3}). We further show that there exists a constant $\varepsilon>0$ such that 
\begin{equation}\label{proof-le9-1}
\langle{\rm grad}F_{\bar{\rho}}(x^k),\eta^{k1}\rangle\leq-\varepsilon,\,\, \forall k\in\mathcal{K}.
\end{equation}
In fact, if there is an index set $\mathcal{K}'\subseteq\mathcal{K}$ such that $$\lim_{k\in\mathcal{K}'}\left\langle{\rm grad}F_{\bar{\rho}}(x^k),\eta^{k1}\right\rangle\rightarrow 0,$$ 
then by the same lines as Lemma \ref{le7}, one can prove that $x^*$ is a KKT point of \eqref{TRpro}, which is a contradiction. Thus, from Lemma \ref{le3}, we obtain 
\begin{equation}\label{proof-le9-4}
\langle{\rm grad}F_{\bar{\rho}}(x^k),\eta^{k}\rangle\leq-\tau\varepsilon,\,\, \forall k\in\mathcal{K}.
\end{equation}
Note that $\{\eta^k\}$ and $\{\tilde{\eta}^k\}$ are both bounded (see Lemma \ref{le6} and the definition of $\tilde{\eta}^k$, respectively). Using the Taylor expansion and combining with \eqref{proof-le9-4} and $\sigma\in(0,1/2)$, we have
\begin{equation*}
\begin{aligned}
	&F_{\bar{\rho}}(R_{x^k}(t\eta^k+t^2\tilde{\eta}^k))- F_{\bar{\rho}}(x^k)-\sigma t\langle{\rm grad}F_{\bar{\rho}}(x^k),\eta^k\rangle\\
	=&F_{\bar{\rho}}(x^k)+t\langle{\rm grad}F_{\bar{\rho}}(x^k),\eta^k\rangle- F_{\bar{\rho}}(x^k)-\sigma t\langle{\rm grad}F_{\bar{\rho}}(x^k),\eta^k\rangle+o(t)\\
	\leq&-(1-\sigma)\tau\varepsilon t+o(t).
\end{aligned}
\end{equation*}
Thus there exists a  $\underline{t}_0>0$ such that the following inequality holds for all  $t\in[0,\underline{t}_0]$:
\begin{equation*}\label{proof-le9-2}
F_{\bar{\rho}}(R_{x^k}(t\eta^k+t^2\tilde{\eta}^k))\leq F_{\bar{\rho}}(x^k)+\sigma t\left\langle{\rm grad}F_{\bar{\rho}}(x^k),\eta^k\right\rangle.
\end{equation*}

On the other hand, for each $i\notin \mathcal{L}(x^*)$, there is a constant $\epsilon_i>0$ such that
$$c_i(x^k)<-\epsilon_i$$
for all $k\in\mathcal{K}$ large enough. Therefore, by the boundedness of $\{\eta^k\}$ and $\{\tilde{\eta}^k\}$, and the continuity of $c_i$ and $ R$, there exists a $\underline{t}_i>0$ such that 
$$c_i(R_{x^k}(t\eta^k+t^2\tilde{\eta}^k))<0$$
for all $t\in[0,\underline{t}_i]$ and $k\in\mathcal{K}$ large enough.

In addition, for each $i\in \mathcal{L}(x^*)$,  by Taylor expansion, we have
\begin{equation}\label{proof-le9-6}
c_i(R_{x^k}(t\eta^k+t^2\tilde{\eta}^k))=c_i(x^k)+t\langle{\rm grad}c_i(x^k),\eta^k\rangle+o(t).
\end{equation}
From \eqref{gamma} and $\inf_{k\in{\mathcal{K}}}\|\eta^{k-1}\|> 0$, it follows that there is a constant $\underline{\mu}$ such that $\mu_i^k\geq\underline{\mu}>0$ for all $k\in\mathcal{K}$.
Thus $\alpha_i^k>0$ for all $k\in\mathcal{K}$. By \eqref{s42}, we have
\begin{equation}\label{proof-le9-3}
\langle{\rm grad}c_i(x^k),\eta^k\rangle
=(\min\{\lambda_i^{k0},0\})^3+\dfrac{\sqrt{2}\beta_i^k\lambda_i^k}{\alpha_i^k}-\theta_k\|\eta^{k1}\|^\nu.
\end{equation}
Moreover, for $i\in \mathcal{L}(x^*)$, we obtain
$$\{c_i(x^k)\}_{\mathcal{K}}\rightarrow 0, \quad \{\alpha_i^k\}_{\mathcal{K}}\rightarrow 1,$$
and
\begin{equation}\label{proof-le9-5}
\delta_i^k=\left(\dfrac{1}{\sqrt{c_i^2(x^k)+(\mu_i^k)^2}\left(\sqrt{c_i^2(x^k)+(\mu_i^k)^2}+\mu_i^k\right)}\right)^{1/2}\leq\dfrac{1}{\sqrt{2}\underline{\mu}}
\end{equation}
for all $k\in\mathcal{K}$. 
By the boundedness of $\{\eta^{k1}\}$ and $\{\eta^{k2}\}$, and the continuity of ${\rm grad}F_{\bar{\rho}}$, it follows that $\{\langle{\rm grad}F_{\bar{\rho}}(x^k),\eta^{k1}-\eta^{k2}\rangle\}$
is bounded. This together with \eqref{proof-le9-1} and the definition of $\theta_k$ shows that there is a constant $\bar{\theta}\in(0,1)$ such that $\theta_k>\bar{\theta}$ for all $k\in\mathcal{K}$ large enough. 
It follows from \eqref{proof-le9-1} again that there exists a constant $\vartheta>0$ such that
$$\mathop{\rm lim\,inf}\limits_{k\in\mathcal{K}}\|\eta^{k1}\|>\vartheta.$$
Note that $\delta_i^k=-\beta_i^k/c_i(x^k)$ for $i\in \mathcal{L}(x^*)$. Then by \eqref{proof-le9-6}, \eqref{proof-le9-3}, \eqref{proof-le9-5} and boundedness of $\{\lambda^k\}$, there is a $\underline{t}_i>0$ such that 

\begin{eqnarray}\label{kexing}
c_i(R_{x^k}(t\eta^k+t^2\tilde{\eta}^k))&=&c_i(x^k)+t\left((\min\{\lambda_i^{k0},0\})^3+\dfrac{\sqrt{2}\beta_i^k\lambda_i^k}{\alpha_i^k}-\theta_k\|\eta^{k1}\|^\nu\right)+o(t)\nonumber\\
&\leq& t\left(\dfrac{\lvert\lambda_i^kc_i(x^k)\lvert}{\alpha_i^k\underline{\mu}}-\bar{\theta}\|\vartheta\|^\nu\right)+o(t)\nonumber\\
&<&0
\end{eqnarray}
for all $k\in\mathcal{K}$ large enough and all $t\in[0,\underline{t}_i]$.

Let $\underline{t}=\min\{\underline{t}_0,\underline{t}_1,\cdots,\underline{t}_{m+\ell}\}$. By \eqref{proof-le9-4} and the search condition \eqref{linef}, we have 
$$F_{\bar{\rho}}(x^{k+1})-F_{\bar{\rho}}(x^k)=F_{\bar{\rho}}(R_{x^k}(t_k\eta^k+t_k^2\tilde{\eta}^k))-F_{\bar{\rho}}(x^k)\leq -\underline{t}\sigma \varsigma\tau\varepsilon$$
for all $k\in\mathcal{K}$ large enough. 
This along with the monotonicity of the sequence $\{F_{\bar{\rho}}(x^k)\}$ shows that $F_{\bar{\rho}}(x^k)\rightarrow-\infty$ as $k\rightarrow \infty$. This contradicts Assumption \ref{A2}, and thus $x^*$ is a KKT point of \eqref{TRpro}.\qed

\section{The Proofs in Section \ref{Sec:superlinear}}\label{AppendixC}
{\bf{Proof of Lemma \ref{Atp}}} (i) If for any $i\in \mathcal{L}(x^*)$, there is no infinite index $\mathcal{K}'\subseteq\mathcal{K}$ such that $\{\mu^k_i\}_{\mathcal{K}'}\rightarrow 0$. Then similar to the second part of the proof of Theorem \ref{th2}, we know that $(x^*,\lambda^*)$ a KKT pair of problem \eqref{TRpro} (i.e. $\hat{\lambda}^*={\lambda}^*$), and there exists an index set $\mathcal{K}'\subseteq\mathcal{K}$ such that $\{\eta^{k0}\}_{\mathcal{K}'}\rightarrow 0_{x^*}$. Since $x^*$ is a KKT point of problem \eqref{Rpro}, we have $c_i(x^*)=0$ for all $i\in\mathcal{E}$. This together with Proposition \ref{tp-p} and the uniqueness of the KKT multiplier, we obtain $\bar{\lambda}^*_\mathcal{I}={\lambda}^*_\mathcal{I}$ and $\bar{\lambda}^*_\mathcal{E}={\lambda}^*_\mathcal{E}-\bar{\rho}e$. It follows from Assumption \ref{A4} that ${\lambda}^*_i>0$ for all $i\in\mathcal{I}(x^*)$. On the other hand, since $\{\eta^{k0}\}_{\mathcal{K}'}\rightarrow 0_{x^*}$, $\{\lambda^{k0}\}_{\mathcal{K}'}\rightarrow \lambda^*\geq 0$ and $\rho_k\equiv\bar{\rho}$ for all $k\in\mathbb{N}$, by the construction of Algorithm \ref{algo1}, we have $\{\lambda^{k0}_\mathcal{E}\}_{\mathcal{K}'}\rightarrow \lambda_\mathcal{E}^*\geq r_2e> 0$. Therefore, the strict complementarity condition of problem \eqref{TRpro} holds at $(x^*,\hat{\lambda}^*)$. Otherwise, there exist some index $i\in \mathcal{L}(x^*)$ and an infinite index set $\mathcal{K}'\subseteq\mathcal{K}$ such that $\{\mu^k_i\}_{\mathcal{K}'}\rightarrow 0$. 
Then, combining with the first part of the proof of Theorem \ref{th2} and following similar reasoning to the above analysis, we can also prove that the strict complementarity condition of problem \eqref{TRpro} holds at $(x^*,\hat{\lambda}^*)$.

(ii) In view of the fact that $x^*\in\mathcal{F}_P$, we obtain $\mathcal{L}(x^*)=\mathcal{I}(x^*)\cup\mathcal{E}$ and ${\rm Hess}_xL_{\bar{\rho}}(x^*,\lambda^*)={\rm Hess}_xL(x^*,\bar{\lambda}^*)$. Then it follows from Assumption \ref{A5} that the SOSC of problem \eqref{TRpro} holds at $(x^*,\hat{\lambda}^*)$. \qed

\vspace{0.5cm}
\noindent
{\bf{Proof of Lemma \ref{le9.4}}} From Theorem \ref{th2}, it is sufficient to show that for any $i\in \mathcal{L}(x^*)$, there does not exist an infinite index set $\mathcal{K}'\subseteq\mathcal{K}$ such that $\{\mu^k_i\}_{\mathcal{K}'}\rightarrow 0$. Suppose by contradiction that there exists some $i\in\mathcal{L}(x^*)$ and an infinite index $\mathcal{K}'\subseteq\mathcal{K}$ such that $\{\mu^k_i\}_{\mathcal{K}'}\rightarrow 0$. Then by the definition of $\mu^k$, we know that $\{\lambda_i^{(k-1)0}\}_{\mathcal{K}'}\rightarrow 0$. This contradicts Lemma \ref{Atp} (i). \qed

\vspace{0.5cm}
\noindent
{\bf{Proof of Proposition \ref{pro4}}}
Since $p^*$ is an isolated accumulation point of $\{p^k\}$, there exists a closed ball $B(p^*,r)=\{p\in X\mid {\rm dist}(p,p^*)\leq r\}$ such that there is no another accumulation point in $B(p^*,r)$. Let 
\begin{equation}\label{proof-pro4-1}
\mathcal{K}_1=\{k\mid {\rm dist}(p^k,p^*)\leq r\},\,\,\mathcal{K}_2=\{k\mid {\rm dist}(p^k,p^*)> r\}.
\end{equation}
We define $\mathcal{K}_3=\{k\mid k\in\mathcal{K}_1,\,k+1\in\mathcal{K}_2\}$. By the definition of $\mathcal{K}_1$ and the fact that $p^*$ is an isolated accumulation point, we have $\{p^k\}_{\mathcal{K}_1}\rightarrow p^*$. Thus there is a positive integer $N_1$ such that 
\begin{equation}\label{proof-pro4-2}
{\rm dist}(p^k,p^*)<\dfrac{r}{4},\,\,\forall k\in\mathcal{K}_1,\,\,k>N_1.
\end{equation}
For a set $S$, we denote the number of elements in set $S$ by $\arrowvert S\arrowvert$. Then we show that $\arrowvert\mathcal{K}_2\arrowvert<\infty$. Suppose by contradiction that $\arrowvert\mathcal{K}_2\arrowvert=\infty$, then $\arrowvert\mathcal{K}_3\arrowvert=\infty$. Since $\mathcal{K}_3\subseteq\mathcal{K}_1$, we obtain $\{p^k\}_{\mathcal{K}_3}\rightarrow p^*$. Therefore, by the assumption of this proposition, there is an infinite index $\mathcal{K}_4\subseteq\mathcal{K}_3$ such that $\{{\rm dist}(p^{k+1},p^k)\}_{\mathcal{K}_4}\rightarrow 0$. Thus there is a positive integer $N_2$ such that 
\begin{equation}\label{proof-pro4-3}
{\rm dist}(p^{k+1},p^k)<\dfrac{r}{4},\,\,\forall k\in\mathcal{K}_4,\,\,k>N_2\,\,{\rm and}\,\,{\rm dist}(p^{k},p^*)>r,\,\,\forall k\in\mathcal{K}_2.
\end{equation}
Let $s\in\mathcal{K}_4$ and $s>\max\{N_1,N_2\}$. It is clear that $s+1\in\mathcal{K}_2$. Then by \eqref{proof-pro4-2} and \eqref{proof-pro4-3}, we have
$${\rm dist}(p^{s+1},p^s)\geq{\rm dist}(p^{s+1},p^*)-{\rm dist}(p^s,p^*)>r-\dfrac{r}{4}=\dfrac{3r}{4}>\dfrac{r}{4},$$
which is a contradiction with \eqref{proof-pro4-3}. Therefore $\arrowvert\mathcal{K}_2\arrowvert<\infty$. Then it follows from \eqref{proof-pro4-1} and $\{p^k\}_{\mathcal{K}_1}\rightarrow p^*$ that $\{p^k\}\rightarrow p^*$, which completes the proof.\qed

\vspace{0.5cm}
\noindent
{\bf{Proof of Corollary \ref{co1}}}
We first show that $\{\eta^{k0}\}\rightarrow 0_{x^*}$. Note that $\{\eta^{k0}\}$ is bounded. Let $\eta^{*0}$ be arbitrary accumulation point of $\{\eta^{k0}\}$. Then there exists an infinite index set $\mathcal{K}\subseteq \mathbb{N}$ such that $\{\eta^{k0}\}_{\mathcal{K}}\rightarrow \eta^{*0}$. It is already shown that $\{\lambda^{k0}\}\rightarrow\lambda^*$. Therefore $(\eta^{*0},\lambda^*)$ must be the unique solution of system \eqref{proof-le11-3}. Note that the unique solution of system \eqref{proof-le11-3} is $(0_{x^*},\lambda^*)$ (see the proof of Theorem \ref{le11}). So we must have $\{\eta^{k0}\}_{\mathcal{K}}\rightarrow 0_{x^*}$. Hence $\{\eta^{k0}\}\rightarrow 0_{x^*}$. Through an analysis similar to the above and using the fact that  $\left\{\left(\min\{\lambda_i^{k0},0\}\right)^3\right\}\rightarrow 0$, we can obtain  $\{\eta^{k1}\}\rightarrow 0_{x^*}$. Then it follows from \eqref{relationship30} that $\{\eta^{k}\}\rightarrow 0_{x^*}$. Thus, part (i) holds. By \eqref{relationship20} and \eqref{relationship30}, part (ii) is also true.  Part (iii) follows from the definition of $\mu^k$ and Assumption \ref{A6}. Finally, the set $\mathcal{L}_k$ detects $\mathcal{L}(x^*)$ completely whenever $(x^k,\lambda^k)$ is close to $(x^*,\lambda^*)$ under Assumption \ref{A4}.\qed

\vspace{0.5cm}
\noindent
{\bf{Proof of Lemma \ref{le12}}}
It follows from \eqref{alpha}, Corollary \ref{co1} and Lemma \ref{Atp} (i) that $\{\alpha_i^k\}\rightarrow 1$ and $\{\beta_i^k\}\rightarrow 0$ for $i\in \mathcal{L}(x^*)$.
On the other hand, note that
$$(\lambda_i^k\delta^k_i)^2=\dfrac{(\lambda_i^k)^2}{\sqrt{c_i^2(x^k)+(\mu_i^k)^2}\left(\sqrt{c_i^2(x^k)+(\mu_i^k)^2}+\mu_i^k\right)}.$$
For $i\in \mathcal{L}(x^*)$, it follows from Corollary \ref{co1} and Lemma \ref{Atp} (i) that $\{(\lambda_i^k\delta^k_i)^2\}\rightarrow 1/2$. Thus part (i) is established. 
Part (ii) also holds from Corollary \ref{co1} and Lemma \ref{Atp} (i).\qed

\vspace{0.5cm}
\noindent
{\bf{Proof of Lemma \ref{le13}}}
It follows from Corollary \ref{co1} that $\mathcal{L}_k=\mathcal{L}(x^*)$ for $k$ large enough. Thus the direction $\tilde{\eta}^k$ is computed by attempting to solve \eqref{subpro} in $\eta$ with $\mathcal{L}_k=\mathcal{L}(x^*)$, which is equivalent to solving the following linear system in ($\eta, \lambda$)	
\begin{equation}\label{proof-le13-0}
\begin{aligned}
	&\mathcal{H}_k\eta+\sum_{i\in \mathcal{L}(x^*)}\lambda_i{\rm grad}c_i(x^k)=0_{x^k},\\
	&\langle{\rm grad}c_i(x^k),\eta\rangle=-\varpi_k-c_i(R_{x^k}(\eta^k)),\quad \forall i\in \mathcal{L}(x^*).
\end{aligned}
\end{equation}
Without loss of generality, we assume that $\mathcal{L}(x^*)=\{1,2,\cdots,\arrowvert \mathcal{L}(x^*) \arrowvert\}$, where $\arrowvert \mathcal{L}(x^*) \arrowvert$ denotes the number of elements of the set $\mathcal{L}(x^*)$. Define $\mathcal{B}_k:T_{x^k}\mathcal{M}\times\mathbb{R}^{\arrowvert \mathcal{L}(x^*) \arrowvert}\rightarrow T_{x^k}\mathcal{M}\times\mathbb{R}^{\arrowvert\mathcal{L}(x^*) \arrowvert}$ as
\begin{equation}\label{proof-le13-01}
\mathcal{B}_k(\eta,\lambda)
=\left(\mathcal{H}_k[\eta]+\sum_{i=1}^{\arrowvert \mathcal{L}(x^*) \arrowvert}\lambda_i{\rm grad}c_i(x^k),\Lambda'_k\right),
\end{equation}
where $$\Lambda'_k=\left(\langle{\rm grad}c_1(x^k),\eta\rangle,\cdots,\langle{\rm grad}c_{\arrowvert \mathcal{L}(x^*) \arrowvert}(x^k),\eta\rangle\right)^{\top}\in\mathbb{R}^{\arrowvert \mathcal{L}(x^*) \arrowvert}.$$
It is clear that $\mathcal{B}_k$ is a continuous linear operator. On the other hand, by Assumptions \ref{A1} and \ref{A3}, we know that $\mathcal{B}_k$ is nonsingular for all $k$ large enough. Furthermore, similar to the proof in \cite[Lem. 3.1]{qi2000new}, we can show that the sequence $\{\|\mathcal{B}_k^{-1}\|_{\rm op}\}$ (with $k$ being large enough) is bounded.

Using the Taylor expansion for $c_i$, it follows that
\begin{equation}\label{proof-le13-1}
-c_i(R_{x^k}(\eta^k))=-c_i(x^k)-\langle{\rm grad}c_i(x^k),\eta^k\rangle+O(\|\eta^k\|^2).
\end{equation}
By the definition of $\delta^k_i$, we obtain $c_i(x^k)=-\beta^k_i/\delta^k_i$. On the other hand, for $i\in \mathcal{L}(x^*)$ and $k$ large enough, from Lemma \ref{le12} (i), we have $\alpha_i^k\neq 0$. It then follows from \eqref{s42}, \eqref{eta}, $\nu>2$ and $\lambda^{k0}\rightarrow\lambda^*$ that $$\dfrac{\sqrt{2}\beta^k_i\lambda^k_i}{\alpha_i^k}=\langle{\rm grad}c_i(x^k),\eta^k\rangle +o(\|\eta^k\|^2).$$ 
Thus by Lemma \ref{le12} and \eqref{proof-le13-1}, we have
\begin{equation*}\label{proof-le13-2}
-\varpi_k-c_i(R_{x^k}(\eta^k))=-\varpi_k+\left(\dfrac{\alpha_i^k}{\sqrt{2}\delta^k_i\lambda_i^k}-1\right)\langle{\rm grad}c_i(x^k),\eta^k\rangle+O(\|\eta^k\|^2).
\end{equation*}
The definition of $\varpi_k$ and Lemma \ref{le12} show that $\varpi_k=o(\|\eta^k\|^2)$. Then by \eqref{proof-le13-0}, \eqref{proof-le13-01} and the boundedness of $\{\|\mathcal{B}_k^{-1}\|\}$, we complete the proof of this lemma.\qed

\end{document}